\documentclass[english,11pt]{amsart}

\usepackage[english]{babel}
\usepackage[T1]{fontenc}
\usepackage{lmodern}

\usepackage{MnSymbol}
\usepackage[centering]{geometry}

\theoremstyle{plain}
\newtheorem{theo}{Theorem}
\newtheorem{lemm}[theo]{Lemma}
\newtheorem{prop}[theo]{Proposition}
\newtheorem{coro}[theo]{Corollary}

\theoremstyle{definition}
\newtheorem{defi}[theo]{Definition}

\theoremstyle{remark}

\usepackage{tikz}
\usetikzlibrary{arrows.meta,positioning,calc,patterns,shapes.geometric}
\usepackage[colorlinks=true,linkcolor=blue!50!black,citecolor=blue!50!black,urlcolor=blue!50!black]{hyperref}
\usepackage{microtype}
\usepackage{enumerate}
\usepackage{graphicx}
\usepackage{color}
\usepackage{bm}
\usepackage{mathtools}
\usepackage[dvipsnames]{xcolor}
\definecolor{FlatRed}{RGB}{231,76,60}
\definecolor{FlatGreen}{RGB}{46,204,113}
\definecolor{FlatBlue}{RGB}{52,152,219}
\definecolor{FlatYellow}{RGB}{241,196,15}
\colorlet{FlatViolet}{FlatRed!50!FlatBlue}
\colorlet{FlatBrown}{FlatRed!50!FlatGreen}
\colorlet{FlatOrange}{FlatRed!50!FlatYellow}
\colorlet{FlatCyan}{FlatGreen!50!FlatBlue}

\usepackage{enumitem}
\usepackage{textcomp}
\usepackage[normalem]{ulem}

\usepackage{hyperref}
\hypersetup{
    colorlinks,
    linkcolor={FlatRed},
    citecolor={FlatGreen},
    urlcolor={FlatBlue}
}

\title{Exponential decay of fractional moments in the \(H^{2|2}\) model on \(\mathbb Z^2\)}

\author{Jinglin Wang}
\address{Institut de Recherche Mathématique Avancée, Université de Strasbourg, Strasbourg, France}
\email{jinglin.wang.math@outlook.com}

\author{Xiaolin Zeng}
\address{Institut de Recherche Mathématique Avancée, Université de Strasbourg, Strasbourg, France}
\email{zeng@math.unistra.fr}

\begin{document}

\begin{abstract}
For the \(H^{2|2}\) model on \(\mathbb Z^2\), we show that, at energy zero, the fractional moments of Green functions, \(\mathbb E[ G(x,y)^s]\) decay exponentially in the distance between \(x\) and \(y\), for every constant edge weight \(W>0\) and every \(0<s<1/2\). Here \( G\) is the infinite-volume Green function of the associated random Schr\"odinger operator. The main ingredient is a quantitative lower bound on the derivative of the mean effective conductance with respect to the edge weights.
\end{abstract}

\keywords{Mermin--Wagner estimate; Supersymmetric hyperbolic sigma model; Anderson localization}

\maketitle
% \tableofcontents

\section{Introduction}

The supersymmetric hyperbolic sigma model \(H^{2|2}\), introduced in \cite{Zirnbauer1991}, provides a model for studying localization and delocalization in disordered systems. The existence of a quasi-diffusive phase at large coupling in dimensions at least three was shown in \cite{DisertoriSpencerZirnbauer2010}, while localization at sufficiently small coupling was established in \cite{DisertoriSpencer2010}. Its effective field was identified as the limiting local times profile of the Vertex-Reinforced Jump Process in \cite{SabotTarres2015}, leading to a random walk in random environment point of view for the VRJP. Its representation through a random Schrödinger operator was developed on finite graphs in \cite{SabotTarresZeng2017}, and on infinite graphs in \cite{Sabot2019}. Subsequent developments include monotonicity in the edge weights \cite{Poudevigne2024,HuangZeng2026} and transience for models with long-range interactions \cite{DisertoriMerklRolles2025}.

We consider the \(H^{2|2}\)-model on \(\mathbb Z^2\), with constant edge weight \(W>0\). The question is whether the fractional moments \(\mathbb E[ G(x,y)^s]\), \(0<s<1/2\), decay exponentially in \(|x-y|_1\) for large \(W>0\). Here \( G\) is the zero-energy Green function of the associated random Schrödinger operator \(H_\beta\), introduced in \cite{SabotTarresZeng2017}, defined as increasing finite-volume limits with Dirichlet boundary conditions. We also consider finite regions with wired boundary conditions, i.e. whose exterior is identified to a single vertex \(\delta\), with parallel edge weights added and the effective field pinned by \(u_\delta=0\).

In two dimensions, polynomial decay of \(\mathbb E[e^{s u_y}]\), \(0<s<\frac{1}{2}\), for the field pinned at the origin in wired boxes was shown in \cite{Sabot2021,KozmaPeled2021} under uniformly bounded positive edge weights. Together with the infinite-volume representation in \cite{Sabot2019}, these estimates imply recurrence of the VRJP with constant weights. Exponential decay was available at sufficiently small \(W\), corresponding to strong reinforcement, from the localization estimates in \cite{DisertoriSpencer2010} and the fractional-moment approach in \cite{CollevecchioZeng2021}.

We show exponential decay of the zero-energy Green-function fractional \(s \) moments on \(\mathbb Z^2\) for every constant \(W>0\) and every \(0<s<1/2\), without a smallness assumption on \(W\). The estimates hold uniformly for finite-volume Dirichlet Green functions and pass to the infinite-volume limit. For the full Green matrix on a wired finite region, there is an additional term that decays exponentially in the sum of the distances of the two vertices to the boundary. Section~2 gives the statements and an overview of the proof. Section 3 recall results of the \(H^{2|2}\)-model on finite graphs. Section 4 derives the differential inequality for the mean effective conductance. In Section 5, we combines this inequality with a Mermin--Wagner estimate to obtain polynomial decay across an annular
block in \eqref{eq:Z-r-polynomial-bound}, Section 6 use the Simon--Lieb argument to obtain exponential decay.

\section{Definitions, main results, and proof overview}
We recall  the random-potential and effective-field representations of the model on \(\mathbb{Z}^2 \) as in \cite{Sabot2019}. Details on the extension to infinite graphs are explained in Subsection~\ref{subsec:infinite-graphs-wired-coupling}.

On the square lattice \(\mathbb{Z}^2= (V,E) \), if \(x,y\in V \) are connected by an edge in \(E \), we denote \(x\sim y \). To each edge \(x\sim y \) we associate an edge weight \(W_{xy} >0 \), and we denote by \(W \) the collection of these edge weights. By \cite[Proposition 1]{Sabot2019}, there exists a real random field \((\beta_{x})_{x\in V} \) whose law is denote \(\nu_{V}^{W} \),  such that for any finite subset \(U\subset V \), we have
\begin{equation}\label{eq:Laplace-transform-beta-on-U}
\begin{aligned}
&\int e^{-\sum_{i\in U} \lambda_i \beta_i} \nu_V^W(d\beta)\\
&=e^{-\sum_{\substack{i\sim j, \\ i,j\in U}} W_{i,j}(\sqrt{(1+\lambda_i)(1+\lambda_j)}-1)
-\sum_{\substack{i\sim j,\\ i \in U, j\notin U}} W_{i,j}(\sqrt{1+\lambda_i}-1)}{1\over \prod_{i\in U} \sqrt{1+\lambda_i}}.
\end{aligned}\end{equation}
The random potential \((\beta_{x})_{x\in V} \) defines a random Schrödinger operator \(H_{\beta} \) on its domain \(\mathcal{D}(H_{\beta}) =\left\{f\in\ell^2(\mathbb Z^2): \sum_x\beta_x^2|f(x)|^2<\infty\right\} \), by
\[(H_{\beta}f)(x) = 2\beta_{x}f(x)-\sum_{y:y\sim x} W_{xy} f(y),\ \forall f\in \mathcal{D}(H_{\beta}).\]
Moreover, \cite[Theorem 1]{Sabot2019} defines two notion of inverse of \(H_{\beta} \), namely, \(\widehat{G}(x,y) \) and \(G(x,y) = \widehat{G}(x,y)+ \frac{1}{2\gamma} \psi(x)\psi(y) \), defined in \eqref{eq:infinite-dirichlet-green-definition} and \eqref{eq:infinite-wired-green-definition}, where \(\gamma \) is a Gamma random variable with density \(e^{-t}/\sqrt{\pi t}\) on \(t>0\), independent of \(\beta \) and \((\psi(x))_{x\in V} \) is another real random field, can be think of as a generalized eigenfunction of \(H_{\beta} \) at energy 0, obtained as martingale limit of \(H^{2|2}\)-model on a sequence of finite exhausting subsets of \(\mathbb{Z}^2 \).

Consider a sequence of increasing finite connected subsets \(V_n\uparrow V \). To each \(V_n \), let \(\partial^{-} V_{n} = \{x\in V_{n}, \exists y\notin V_n, x\sim y\} \) be its inner boundary, let \(\delta_{n} \notin V \) be an auxiliary vertex, we associate to \(V_{n} \) a graph \((\widetilde{V}_{n},\widetilde{E}_{n}) \), where \(\widetilde{V}_{n}= V_n\cup\{\delta_{n} \} \) and
\[\widetilde{E}_{n}= \{\{x,y\}\in E,\ x,y\in V_{n} \} \cup \{\{x,\delta_{n}\},\ x\in \partial^{-} V_{n}\}.\] Also define
\[W_{x\delta_{n}}=\sum_{y:y\sim x,y\notin V_{n}}W_{xy}.\]
By \cite[Section 4]{Sabot2019}, on each \(\widetilde{V}_{n} \), we can define random potential \((\beta^{(n)}_{x})_{x\in \widetilde{V}_{n}} \), in such a way that they are coupled with \((\beta_{x})_{x\in V} \). In particular, \(\forall x\in V, \forall n\ge 1, \ \beta^{(n)}_{x}=\beta_{x} \) once \(x\in V_{n} \); and for each \(n\ge 1 \), \(\beta_{\delta_{n}}:=\beta^{(n)}_{\delta_{n}} \) is given by \eqref{eq:wired-potential-coupling} as a measurable function of \( (\beta_{x})_{x\in V_{n}} \) and \(\gamma \).

This allow us to define \(H^{(n)}_{\beta} \), a random Schrödinger matrix on \(\ell^2(\widetilde{V}_{n}) \), by
\[(H_{\beta}^{(n)}f)(x) = 2 \beta_{x}f(x) -  \sum_{y\in \widetilde{V}_{n}, y\sim x}W_{xy}f(y) \]
and we denote \(G_{\beta}^{(n)}= (H_{\beta}^{(n)})^{-1} \), which exist a.s. Define the effective \(H^{2|2} \) field, \((u^{(n)}_{x})_{x\in V_{n}} \) with pinning at \(\delta_{n} \) by
\[e^{u^{(n)}_{x}}= \frac{G^{(n)}_{\beta}(\delta_{n},x)}{G^{(n)}_{\beta}(\delta_{n},\delta_{n})},\ x\in V_{n}\]
where \(G_{\beta}^{(n)}(\delta_{n},\delta_{n}) = \frac{1}{2\gamma} \) for all \(n \ge 1\). The joint law of \((u^{(n)}_{x})_{x\in V_{n}} \) is denoted by \(\mu_{\delta_{n},\widetilde{V}_{n}}^{W}(du) \), called the effective \(H^{2|2}\)-model on \(\widetilde{V}_{n} \) pinned at \(\delta_{n} \). It describes the mixing measure of the Vertex Reinforced Jump Process on \(\widetilde{V}_{n} \) with initial position \(\delta_{n} \), see \cite{SabotTarres2015,SabotTarresZeng2017}.

Set
\begin{equation}\label{eq:dirichlet-green-definition}
\widehat{G}^{(n)}_\beta
:=
\left((H_\beta^{(n)})_{V_n\times V_n}\right)^{-1}
=
\left((H_\beta)_{V_n\times V_n}\right)^{-1}.
\end{equation}
By \cite[Theorem~1(i)]{Sabot2019}, for every \(x,y\in V\), the entries \(\widehat{G}^{(n)}_\beta(x,y)\), defined for all sufficiently large \(n\), increase almost surely to a finite limit
\begin{equation}\label{eq:infinite-dirichlet-green-definition}
\widehat G(x,y):=\lim_{n\to\infty}\widehat{G}^{(n)}_\beta(x,y),
\end{equation}
which does not depend on the exhaustion. Also
\begin{equation}\label{eq:wired-dirichlet-green-decomposition}
G_\beta^{(n)}(x,y)
=
\widehat{G}^{(n)}_\beta(x,y)
+\frac{1}{2\gamma}e^{u_x^{(n)}+u_y^{(n)}},
\qquad x,y\in V_n.
\end{equation}
It is shown in \cite{Sabot2019} that \(e^{u^{(n)}_{x}} \to \psi(x) \) a.s. and thus we define
\begin{equation}\label{eq:infinite-wired-green-definition}
G(x,y):=\lim_{n\to\infty}G_\beta^{(n)}(x,y)
=
\widehat G(x,y)+\frac{1}{2\gamma}\psi(x)\psi(y).
\end{equation}
On \(\mathbb{Z}^2 \)  with constant edge weight, by e.g. \cite{Sabot2021} and  \cite[Theorem~1(iv)]{Sabot2019}, \(\psi\equiv0\) almost surely, thus \(G(x,y)=\widehat{G}(x,y) \).

\begin{theo}
For all \(W > 0 \), consider the \(H^{2|2} \) model on \(\mathbb{Z}^2 \) with constant edge weight \(W \), for any \(0 < s<\frac{1}{2} \), there exist constants \(C_{W,s},c_{W,s} > 0 \), such that, for all \(x,y\in V \),
\begin{equation}\label{eq:main-green-exponential-decay}
\mathbb{E}\left( G(x,y)^{s} \right) \le  C_{W,s}e^{-c_{W,s}|x-y|_{1}},
\end{equation}
where \(|x-y|_{1} \) is the graph distance on \(\mathbb{Z}^2 \).
\end{theo}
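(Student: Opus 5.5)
We follow the route sketched in the introduction: first a polynomial bound across an annulus, then a Simon--Lieb type bootstrap that turns it into exponential decay. Since \(\psi\equiv 0\) on \(\mathbb Z^2\), it suffices to bound \(\mathbb E[\widehat G^{(n)}_\beta(x,y)^s]\) uniformly in \(n\) and let \(n\to\infty\). Monotone convergence in \eqref{eq:infinite-dirichlet-green-definition} then transfers the bound to \(G(x,y)\).

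\textbf{Step 1 (finite-volume identities).} On a wired box \(\widetilde V\) with \(u_\delta=0\), we use the relation \(G_\beta(\delta,x)=\frac{1}{2\gamma}e^{u_x}\) together with \eqref{eq:wired-dirichlet-green-decomposition}. With these, fractional moments \(\mathbb E[G(x,y)^s]\) are controlled by quantities of the type \(\mathbb E[e^{s u_x}]\) in suitable wired regions. The key tool is the restriction/Markov property of \(\beta\) read off from \eqref{eq:Laplace-transform-beta-on-U}: the law of \(\beta\) restricted to \(U\) depends only on the weights inside \(U\) and on those crossing \(\partial U\). This gives the resolvent-type factorization
\[
\widehat G_{\Lambda}(x,y)=\sum_{z\in\partial^-B,\ z'\notin B} \widehat G_B(x,z)\,W_{zz'}\,\widehat G_\Lambda(z',y),
\]
and \(\widehat G_B(x,z)\) is independent-like of the outside up to boundary effects. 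We raise this to the power \(s<1\) and use subadditivity of \(t\mapsto t^s\).

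\textbf{Step 2 (derivative of mean effective conductance and Mermin--Wagner).} Let \(Z_r\) denote the quantity whose \(s\)-moment measures the coupling between the center and the boundary of an annulus of scale \(r\). Concretely, take \(\mathbb E[e^{s u_0}]\) in a wired box of radius \(r\) pinned at the boundary, or the mean effective conductance \(\mathbb E[\text{conductance}_r]\) of the random network \(W_{ij}e^{u_i+u_j}\). We prove a quantitative lower bound on \(\partial_W\) of this mean conductance, using the monotonicity results \cite{Poudevigne2024,HuangZeng2026} and a supersymmetric Ward identity. Integrating in \(W\) against a two-dimensional Mermin--Wagner estimate, which is the mechanism behind \cite{Sabot2021,KozmaPeled2021}, we obtain \(\mathbb E[Z_r^s]\le C r^{-\alpha}\) for some \(\alpha>0\), uniformly in the box size. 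This is the polynomial annular bound \eqref{eq:Z-r-polynomial-bound}.

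\textbf{Step 3 (Simon--Lieb bootstrap).} Combining Step 1 with the product structure, we get
\[
\mathbb E[\widehat G(x,y)^s]\le \Big(\sum_{z,z'} W^s\,\mathbb E[\widehat G_B(x,z)^s]\Big)\sup_{z'}\mathbb E[\widehat G(z',y)^s].
\]
To justify splitting the expectation, we decouple using the Markov property of the random Schrödinger representation together with an a priori bound \(\mathbb E[\widehat G(z,z')^{s}]\le C\) for \(s<1/2\), obtained from the \(\Gamma\)-distribution of the diagonal. The prefactor is at most \(C r\cdot r^{-\alpha}\). Since the boundary has only \(O(r)\) sites, this factor is not small unless \(\alpha>1\), and this is the main obstacle. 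We address it by working with a boundary-averaged quantity, namely the total conductance from the center to the annulus boundary, instead of summing pointwise bounds. With that choice the prefactor is directly \(Z_r^s\)-type, and choosing \(r\) large makes it less than \(1\). Iterating along a path of boxes from \(x\) to \(y\) gives geometric decay with rate \(\sim |x-y|_1/r\), which is \eqref{eq:main-green-exponential-decay}.
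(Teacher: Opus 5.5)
There is a genuine gap in Step~3, at exactly the obstacle you flag. Making the $x$-side boundary-averaged removes the boundary entropy on one side only. Use the two-sided decomposition through the sphere $S=\Omega\cap S_{R+1}(x)$ and integrate out $\beta_S$ conditionally on $\beta_A,\beta_C$. This leaves $C_s\,\mathbb E[(\Psi_R^x)^s]\,\mathbb E\bigl[\bigl(\sum_{z\in C}\sigma_z\widehat G_C(z,y)\bigr)^s\bigr]$. To iterate, you must bring the second factor back to pointwise quantities $\mathbb E[\widehat G_C(z,y)^s]$, and this costs $\sum_z\sigma_z^s=O(R)$. In your one-sided expansion the same loss reappears when you pass from $\max_{z'}\widehat G(z',y)^s$ to $\sup_{z'}\mathbb E[\widehat G(z',y)^s]$. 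So the iteration closes only if $R\,\mathbb E[(\Psi_R^x)^s]\to0$. With a polynomial bound $\mathbb E[(\Psi_R^x)^{1/2}]\le CR^{-\alpha}$ and the tiny exponent one actually gets (here $\alpha=1/(8192(W+2))$), Jensen yields only $O(R^{1-2s\alpha})$, which diverges.

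The missing idea is a multiscale factorization that produces superpolynomial decay. Contract the dyadic spheres $S_{2^\ell r_0+1}(x)$ to single vertices $a_\ell$. By monotonicity in the edge weights, applied to the concave map $t\mapsto\sqrt t$, this can only increase $\mathbb E[e^{u_x/2}]$. The relative fields on the resulting chain of wired annular blocks are then independent (Lemma~\ref{lem:block-field-independence}). Hence $\mathbb E[e^{u_x/2}]$ is a product of single-annulus crossing half-moments, each at most $C(2^\ell r_0)^{-\alpha}$. The product is $C\exp(-c(\log R)^2)$, which does beat the $O(R)$ factor.

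Step~2 is also not yet an argument, and it targets the wrong quantity. Polynomial decay of $\mathbb E[e^{su_0}]$ in a wired box is already the known Sabot/Kozma--Peled estimate, which is precisely what is insufficient. The factorization needs decay $r^{-\alpha}$ of the crossing half-moment $Z_r=\mathbb E[e^{u_a/2}]$ of a single annulus $\{r+2\le|z-x|_1\le 2r\}$ with both boundaries wired. For that quantity Mermin--Wagner alone gives only a constant. Any test function going from $1$ at $a$ to $0$ at $\delta$ across an annulus of fixed aspect ratio has Dirichlet energy of order one.

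The paper gets $r^{-\alpha}$ from the differential inequality $M_r(w)^2\le(w+1)M_r'(w)D_f(w)$. Here $M_r$ is the mean effective conductance of the network $W_{ij}U_iU_j$, and $U=e^{\mathring u}$ is the field pinned at the vertex $\rho$ obtained by identifying $a$ and $\delta$. This inequality is not a consequence of monotonicity or a Ward identity. It comes from an explicit two-vertex Schur-complement and inverse-Gaussian computation of $\partial M/\partial W_e$, combined with Cauchy--Schwarz on $K=\sum_e C_e\nabla_e h\nabla_e f$. Mermin--Wagner enters only in the bulk of the annulus, far from $\rho$, to give $D_f\le Cr^{-\alpha_{\mathrm{bulk}}}$. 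Integrating $M_r'/M_r^2$ over $[w_0,w_0+1]$ then gives $M_r(w_0)\le Cr^{-\alpha_{\mathrm{bulk}}}$, and $Z_r\le CM_r^{1/4}$.

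Finally, your decoupling also needs repair. In the two-sided expansion through $S$, $\beta_A$ and $\beta_C$ are independent because $W_{AC}=0$, and $\beta_S$ is removed with the conditional bound \eqref{eq:green-bilinear-moment-with-eta}. Your one-sided expansion keeps the full $\widehat G(z',y)$, which still depends on $\beta_B$.
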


We also have a finite graph version:
\begin{theo}
Fix \(W>0\) and \(0<s<1/2\). there exist constants \(C_{W,s},c_{W,s}>0\) such that for every nonempty finite set \(V_n\subset \mathbb{Z}^2 \) with wired boundary condition, and all \(x,y\in V_{n}\),
\[\mathbb E[\widehat{G}^{(n)}_{\beta}(x,y)^s] \le C_{W,s}e^{-c_{W,s}|x-y|_1},\]
and with \(d_{V_{n}}(x)=d(x,V_{n}^{c}) =\min_{z\notin V_{n}}|x-z|_1\),
\[\mathbb E[G^{(n)}_\beta(x,y)^s] \le C_{W,s}\left( e^{-c_{W,s}|x-y|_1} +e^{-c_{W,s}(d_{V_n}(x)+d_{V_n}(y))} \right).\]
\end{theo}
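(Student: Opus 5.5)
The plan is to follow the route announced in the introduction. We derive a differential inequality for a mean effective conductance, turn it into polynomial decay across an annulus using a Mermin--Wagner estimate, and then bootstrap to exponential decay by a Simon--Lieb argument. The wired finite-volume statement is the primary object. Theorem 1 then follows from Theorem 2 by taking \(V_n\uparrow V\): \(\widehat G^{(n)}_\beta(x,y)\uparrow \widehat G(x,y)=G(x,y)\) almost surely (since \(\psi\equiv 0\)), so monotone convergence transfers the uniform bound on \(\mathbb E[\widehat G^{(n)}_\beta(x,y)^s]\).

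\textbf{Step 1: the differential inequality.} For a box \(\Lambda_L=[-L,L]^2\) with wired boundary pinned at \(\delta\), set \(Z_L(W)=\mathbb E[e^{s u_0}]\), or equivalently a mean effective conductance between \(0\) and \(\delta\). The quantity \(e^{u_0}\) is the ratio \(G(\delta,0)/G(\delta,\delta)\), which is a random-walk/electrical object in the environment. Differentiating the law \(\mu^W\) in the edge weights produces edge terms of the form \(\sqrt{W_{ij}}\,(\cosh(u_i-u_j)-1)\)-type covariances. Using the monotonicity of \cite{Poudevigne2024,HuangZeng2026} to fix the sign, the aim is to obtain
\[ -W\,\partial_W \log Z_L(W) \;\ge\; c\,\frac{\text{(edge energy across annuli)}}{Z_L}. \]
The conclusion I want is that increasing \(W\) decreases the fractional moment at a quantitative rate, roughly \(\partial_W Z\le -c\,Z\log(1/Z)\) or an annulus-wise analogue.

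\textbf{Step 2: polynomial decay across an annular block.} The Mermin--Wagner estimate of \cite{Sabot2021,KozmaPeled2021} gives \(\mathbb E[e^{s u_y}]\le C|y|^{-c}\) for all bounded weights, uniformly in \(W\) in compact sets. Integrating the inequality of Step 1 in \(W\) from a small value \(W_0\), where exponential decay is known from \cite{DisertoriSpencer2010,CollevecchioZeng2021}, improves the exponent. The target is a bound \(Z_r\le C r^{-\alpha}\) for the crossing of an annulus \(\Lambda_{2r}\setminus\Lambda_r\), with \(\alpha\) large or at least with a prefactor that can be made small. What Step 3 needs is \(Z_r<\epsilon_0\) for some finite \(r\), where \(\epsilon_0\) is the Simon--Lieb threshold.

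\textbf{Step 3: Simon--Lieb.} By the resolvent identity, and the fact that \(\widehat G^{(n)}\) restricted to a subset is again a Dirichlet Green function, we have
\[ \widehat G(x,y)=\widehat G_{B}(x,y)+\sum_{(z,w)\in\partial B}\widehat G_B(x,z)W_{zw}\widehat G(w,y). \]
Taking \(s\)-th moments with \(s<1/2\), using subadditivity, conditioning on \(\beta\) inside \(B\) and the spatial Markov/restriction property of \(\nu^W\), and using a priori bounds \(\mathbb E[\widehat G(w,y)^s]\le C\) from the Laplace transform \eqref{eq:Laplace-transform-beta-on-U}, gives \(\mathbb E[\widehat G(x,y)^s]\le \theta\sup_w\mathbb E[\widehat G(w,y)^s]\) with \(\theta<1\) once \(Z_r\) is small. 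Iterating yields exponential decay. For \(G^{(n)}\) one uses \eqref{eq:wired-dirichlet-green-decomposition}. The extra term \(\frac1{2\gamma}e^{u_x+u_y}\) is handled by Cauchy--Schwarz and the same exponential bound applied to \(\mathbb E[e^{2su_x}]\), i.e. decay in \(d_{V_n}(x)\), which gives the second summand.

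The main obstacle is Step 3's conditioning: \(\beta\) is not a product measure, so the factorization \(\mathbb E[\widehat G_B^s\,\widehat G^s]\) requires the restriction property of \(\nu^W\) together with a fractional-moment bound uniform in boundary conditions. Quantitative control of Step 1's constant is also delicate.
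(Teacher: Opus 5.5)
Your three-stage outline has the same architecture as the paper, but the two quantitative steps that carry the proof are either missing or go the wrong way.

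\textbf{Steps 1--2.} The sign in Step 1 is wrong. By the very monotonicity you invoke, $\mathbb E[f(e^{u_x})]$ decreases in the weights for convex $f$. Taking $f(t)=-t^s$ shows that $Z=\mathbb E[e^{su_0}]$ is \emph{nondecreasing} in $W$. So no inequality of the form $\partial_W Z\le -cZ\log(1/Z)$ can hold. Likewise, Step 2's plan of starting at a small $W_0$, where exponential decay is known, and integrating upward cannot carry smallness to the given $W$.

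The paper uses no small-$W$ input. Its key estimate is \eqref{eq:M-differential-inequality}, $M(w)^2\le(w+1)M'(w)D_f(w)$, for the mean effective conductance $M=\mathbb E[K]$ of an annular block. Here $K$ is computed with random conductances $C_{ij}=W_{ij}e^{\mathring u_i+\mathring u_j}$, built from the field on the graph where the two boundary vertices are identified. The proof conditions on all potentials except those at the two endpoints of one edge. It then computes the edge derivative of the conditional conductance exactly via inverse-Gaussian laws (Lemma~\ref{lem:two-point-conductance-derivative}), which bounds a weighted Dirichlet energy of the voltage $h$ by $(w+1)M'(w)$. Finally it applies Cauchy--Schwarz to $K=\sum_e C_e\nabla_e h\nabla_e f$ with a linear cutoff $f$. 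Mermin--Wagner enters only to show $D_f(w)\le C r^{-\alpha}$: the cutoff energy is $O(1)$ in two dimensions, and the bulk half-moments of $e^{\mathring u}$ supply the factor $r^{-\alpha}$. Integrating $M'/M^2\ge c\,r^{\alpha}$ over $[w_0,w_0+1]$, using only $1/M(w_0+1)>0$, gives $M_r(w_0)\le Cr^{-\alpha}$. Lemma~\ref{lem:Z-bound-by-M} then turns this into $Z_r\le Cr^{-\alpha/4}$.

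\textbf{Step 3.} Even granting a single-annulus bound, ``$Z_r<\epsilon_0$ for some finite $r$'' is not what Simon--Lieb needs. The contraction factor is $q_R=C_s\,\mathbb E[(\Psi_R^x)^s]\sum_z\sigma_z^s$, and $\sum_z\sigma_z^s$ is of order $R$ (the perimeter of $B(x,R)$). So the wired-ball quantity $\mathbb E[(\Psi_R^x)^s]$, with $\Psi_R^x=e^{u_x}$, must be $o(1/R)$. A polynomial bound $R^{-2s\alpha}$ with an exponent like $\alpha=1/(8192(W+2))$ cannot compensate the factor $R$, and no small prefactor is available.

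The missing idea is the multiscale factorization:
\begin{itemize}
\item Contract the layers $S_{2^\ell r_0+1}(x)$ to single vertices. By monotonicity this can only increase $\mathbb E[e^{u_x/2}]$; see \eqref{eq:half-moment-comparison-under-contraction}.
\item The relative fields on the resulting chain of wired annular blocks are then independent (Lemma~\ref{lem:block-field-independence}, via factorization of the spanning-tree polynomial).
\item The roughly $\log_2 R$ dyadic bounds therefore multiply to $e^{-c(\log R)^2}$, which does beat $R$.
\end{itemize}

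The decoupling you flag is also not resolved by a one-step resolvent identity, because $\widehat G(w,y)$ still depends on $\beta_B$. The paper instead expands through the separating layer $S$: $G_\Omega(x,y)=(\widehat G_AW_{AS}\check G_SW_{SC}\widehat G_C)_{xy}$. It integrates out $\beta_S$ under its conditional law $\nu_S^{\check W,\check\eta}$ using \eqref{eq:green-bilinear-moment-with-eta}, which is uniform in $(\check W,\check\eta)$. It then uses that $\beta_A$ and $\beta_C$ are independent since $W_{AC}=0$.

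A smaller point concerns the wired correction. $\mathbb E[e^{2su_x}]$ is not covered by the Dirichlet bound when $2s\ge1/2$. You must interpolate by H\"older between $\mathbb E[e^{su_x}]$ and $\mathbb E[e^{u_x}]=1$, and use that $\gamma$ is independent of $u^{(n)}$.
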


\subsection{Proof overview}
For an integer \(r\ge0\), write
\[B(x,r)=\{z\in\mathbb Z^2:|z-x|_1\le r\}, \qquad S_r(x)=\{z\in\mathbb Z^2:|z-x|_1=r\}.\]
Fix an integer \(r_0\ge32\) and let \(R=2^nr_0\), where \(n\ge1\) is an integer.

Consider a ball \(B(x,R) \) centered at \(x\in V \), with its exterior wired to \(\delta\). The condition \(r_0\ge 32 \) ensures that \eqref{eq:Z-r-polynomial-bound} applies at every dyadic scale.

We cut \(B(x,R) \) into a ball \(B(x,r_0) \) and annular blocks \(\mathcal{A}_{r_0},\mathcal{A}_{2r_0},\cdots, \mathcal{A}_{2^{n-1}r_0} \) where each \(\mathcal{A}_{2^{\ell}r_0} \) is the induced sub-graph of \(\mathbb{Z}^2 \) with vertex set
\[V(\mathcal{A}_{2^{\ell}r_0}) = \{v\in V,\ 2^{\ell}r_0+2\le |v-x|_{1}\le 2^{\ell+1}r_0\}\]
and we contract all the vertices on \(S_{2^{\ell}r_0+1} = \{v\in V:\ |v-x|_{1}=2^{\ell}r_0+1\} \) into a vertex called \(a_{\ell} \). \(B(x,R) \), after such contraction, becomes a chain of annular blocks (see Fig.~\ref{fig:contracted-ball}), start with \(B(x,r_0) \), whose boundary is all connected to \(a_0 \), then \(a_0 \) is connected to \(\mathcal{A}_{r_0} \) through the inner lozenge \(S_{r_0+2}(x) \) of \(\mathcal{A}_{r_0} \), and the outer lozenge \(S_{2 r_0}(x) \) of \(\mathcal{A}_{r_0} \) is connected to \(a_1 \), etc, until \(\mathcal{A}_{2^{n-1}r_0} \). Denote by \(\check{B}(x,R) \) this contracted graph. The figures \ref{fig:adjacent-annular-blocks} below demonstrates how blocks are connected.

\begin{figure}[htbp]
  \centering
\begin{tikzpicture}[
    block/.style={
        draw, diamond, aspect=1.4,
        inner sep=2pt,
        fill=blue!6,
        minimum width=1.7cm,
        minimum height=1.2cm,
        align=center
    },
    junction/.style={
        circle, fill=black, inner sep=2.2pt
    },
    every node/.style={font=\small}
]
\node[block] (core) {$B(0,r_0)$};
\node[junction, right=0.45cm of core,
      label=above:$a_0$] (a0) {};
\node[block, right=0.45cm of a0] (A0)
      {$\mathcal A_{r_0}$};
\node[junction, right=0.45cm of A0,
      label=above:$a_1$] (a1) {};
\node[right=0.35cm of a1] (dots) {$\cdots$};
\node[junction, right=0.35cm of dots,
      label=above:$a_{n-1}$] (an) {};
\node[block, right=0.45cm of an] (An)
      {$\mathcal A_{2^{n-1}r_0}$};
\node[junction, right=0.45cm of An,
      label=above:$\delta$] (delta) {};

% Several lines indicate multiple boundary connections.
\foreach \angle in {-20,0,20} {
    \pgfmathtruncatemacro{\westangle}{180-\angle}
    \draw (core.\angle) -- (a0);
    \draw (a0) -- (A0.\westangle);
    \draw (A0.\angle) -- (a1);
    \draw (an) -- (An.\westangle);
    \draw (An.\angle) -- (delta);
}
\draw (a1) -- (dots);
\draw (dots) -- (an);

\node[below=0.55cm of A0, align=center, font=\footnotesize]
    {$2^{\ell}r_0+2\le |v|_1\le 2^{\ell+1}r_0$
     \quad inside $\mathcal A_{2^\ell r_0}$};
\end{tikzpicture}
\caption{After translating the centre to \(0\), contraction
of the layers \(S_{2^\ell r_0+1}\) turns the wired ball into
a chain of annular blocks.}
  \label{fig:contracted-ball}
\end{figure}

\begin{figure}[tbp]
  \centering
% Perspective schematic; rings depict only the surviving lattice layers.
\begin{tikzpicture}[x=1cm,y=1cm,font=\small,
  innerfan/.style={blue!70!black,line width=.45pt},
  outerfan/.style={orange!85!black,line width=.45pt}]
  \coordinate (left) at (0,0);
  \coordinate (joint) at (6,0);
  \coordinate (right) at (12,0);
  % Two annular sheets, viewed obliquely. Faint lozenges and segments suggest the lattice
  % region, not additional long-range physical edges.
  \foreach \c/\L/\R/\sz in {3/left/joint/1,9/joint/right/1.35}{
    % Rear portions of representative physical-edge fans.
    \foreach \ang in {110,150,190,230,270}{
      \draw[innerfan,dashed,opacity=.45] (\L) -- ({\c+\sz*.42*cos(\ang)/(abs(cos(\ang))+abs(sin(\ang)))},{\sz*.78*sin(\ang)/(abs(cos(\ang))+abs(sin(\ang)))});
      \draw[outerfan,dashed,opacity=.45] ({\c+\sz*.88*cos(\ang)/(abs(cos(\ang))+abs(sin(\ang)))},{\sz*1.65*sin(\ang)/(abs(cos(\ang))+abs(sin(\ang)))}) -- (\R);
    }
    \path[fill=cyan!12,even odd rule]
      ({\c+\sz*.88},0) -- (\c,{\sz*1.65}) --
      ({\c-\sz*.88},0) -- (\c,{-\sz*1.65}) -- cycle
      ({\c+\sz*.42},0) -- (\c,{\sz*.78}) --
      ({\c-\sz*.42},0) -- (\c,{-\sz*.78}) -- cycle;
    \foreach \rx/\ry in {.57/1.07,.72/1.36}{
      \draw[cyan!40!black,opacity=.3] ({\c+\sz*\rx},0) -- (\c,{\sz*\ry}) --
      ({\c-\sz*\rx},0) -- (\c,{-\sz*\ry}) -- cycle;
    }
    \foreach \ang in {0,30,...,330}{
      \draw[cyan!40!black,opacity=.3]
        ({\c+\sz*.42*cos(\ang)/(abs(cos(\ang))+abs(sin(\ang)))},{\sz*.78*sin(\ang)/(abs(cos(\ang))+abs(sin(\ang)))}) --
        ({\c+\sz*.88*cos(\ang)/(abs(cos(\ang))+abs(sin(\ang)))},{\sz*1.65*sin(\ang)/(abs(cos(\ang))+abs(sin(\ang)))});
    }
    \draw[blue!70!black,thick] ({\c+\sz*.42},0) -- (\c,{\sz*.78}) --
      ({\c-\sz*.42},0) -- (\c,{-\sz*.78}) -- cycle;
    \draw[orange!85!black,thick] ({\c+\sz*.88},0) -- (\c,{\sz*1.65}) --
      ({\c-\sz*.88},0) -- (\c,{-\sz*1.65}) -- cycle;
    % Front portions of the fans, drawn above the annular sheet.
    \foreach \ang in {-70,-30,10,50,90}{
      \draw[innerfan] (\L) -- ({\c+\sz*.42*cos(\ang)/(abs(cos(\ang))+abs(sin(\ang)))},{\sz*.78*sin(\ang)/(abs(cos(\ang))+abs(sin(\ang)))});
      \draw[outerfan] ({\c+\sz*.88*cos(\ang)/(abs(cos(\ang))+abs(sin(\ang)))},{\sz*1.65*sin(\ang)/(abs(cos(\ang))+abs(sin(\ang)))}) -- (\R);
    }
    \foreach \ang in {-70,-30,10,50,90,110,150,190,230,270}{
      \fill[blue!70!black] ({\c+\sz*.42*cos(\ang)/(abs(cos(\ang))+abs(sin(\ang)))},{\sz*.78*sin(\ang)/(abs(cos(\ang))+abs(sin(\ang)))}) circle(1.1pt);
      \fill[orange!85!black] ({\c+\sz*.88*cos(\ang)/(abs(cos(\ang))+abs(sin(\ang)))},{\sz*1.65*sin(\ang)/(abs(cos(\ang))+abs(sin(\ang)))}) circle(1.1pt);
    }
  }
  \node at (3,2.25) {$\mathcal A_r$};
  \node at (9,2.85) {$\mathcal A_{2r}$};
  \foreach \p in {left,joint,right}{\fill (\p) circle(2.7pt);}
  \node[below=9pt,fill=white,inner sep=2pt] at (left) {$a_\ell$};
  \node[below=9pt,fill=white,inner sep=2pt] at (joint) {$a_{\ell+1}$};
  \node[below=9pt,fill=white,inner sep=2pt] at (right) {$a_{\ell+2}$};
  \node[below=28pt,align=center,font=\scriptsize] at (left) {image of\\$S_{r+1}$};
  \node[below=28pt,align=center,font=\scriptsize] at (joint) {image of $S_{2r+1}$};
  \node[below=28pt,align=center,font=\scriptsize] at (right) {image of\\$S_{4r+1}$};
  % Leaders identify surviving layers, not the contracted layers.
  \foreach \c/\inn/\out/\sz in {3/{r+2}/{2r}/1,9/{2r+2}/{4r}/1.35}{
    \draw[blue!70!black] ({\c-\sz*.126},{-\sz*.546}) -- ({\c-\sz*.95},{-\sz*1.95});
    \node[below,text=blue!70!black] at ({\c-\sz*.95},{-\sz*1.95}) {$S_{\inn}$};
    \draw[orange!85!black] ({\c+\sz*.264},{-\sz*1.155}) -- ({\c+\sz*.95},{-\sz*1.95});
    \node[below,text=orange!85!black] at ({\c+\sz*.95},{-\sz*1.95}) {$S_{\out}$};
  }
  \node[align=center,font=\scriptsize] at (6,-3.55)
    {};
\end{tikzpicture}
\caption{Let $r=2^\ell r_0$. Two adjacent annular blocks and their boundary vertices $a_\ell$, $a_{\ell+1}$, and $a_{\ell+2}$.}
\label{fig:adjacent-annular-blocks}
\end{figure}

\begin{defi}\label{def:wired-annular-block}
For \(0\le\ell<n\), set \(r=2^\ell r_0\), with \(a_n=\delta\). Define the wired annular block \(\widetilde{\mathcal A}_r\) as the subgraph of \(\check B(x,R)\) induced by
\[
V(\widetilde{\mathcal A}_r)
=
\{z\in\mathbb Z^2:r+2\le |z-x|_1\le2r\}
\cup\{a_\ell,a_{\ell+1}\}.
\]
Its edge weights are inherited from the contraction: each weight is the sum of the weights of the original lattice edges represented by that edge, i.e.
\[W_{a_{\ell}z}=\sum_{y:y\sim z,y\in S_{2^{\ell}r_0+1} (x)} W_{zy}.\]
\end{defi}

The \(H^{2|2}\)-model on \(\check B(x,R)\) decomposes into independent relative fields on these blocks, as shown in Lemma~\ref{lem:block-field-independence}. The main estimate is
\[\mathbb{E}_{\mu_{a_{\ell+1},V(\widetilde{\mathcal{A}}_{2^{\ell}r_0})}^{W}}(e^{\frac{1}{2}u_{a_{\ell}}}) \le C_{W} (2^{\ell} r_0)^{-\alpha(W)}.\]
Here the constants \(C_{W} \) and \(\alpha(W) \) are given in \eqref{eq:Z-r-decay-constants}, and each \(a_{\ell},a_{\ell+1} \) represent \(O(2^{\ell}) \) vertices of \(V(\mathbb{Z}^{2}) \). Multiplying the bounds on the independent blocks, then use monotonicity of \(H^{2|2} \)-model, gives the following bound on the original wired ball, with possibly different constants \(C_W,c_W>0\),
\[\mathbb{E}_{\mu_{\delta,B(x,R)\cup\{\delta\}}^{W}}(e^{\frac{1}{2}u_{x}}) \le C_{W} \exp(-c_{W}(\log R)^2).\]
Adapting the finite-size criterion of Simon and Lieb \cite{Simon1980,Lieb1980}, see also \cite[Section~4.a]{ASFH2001}, to the \(H^{2|2}\)-model, we can conclude exponential decay.

To bound \(\mathbb{E}_{\mu_{a_{\ell+1},V(\widetilde{\mathcal{A}}_{2^{\ell}r_0})}^{W}}(e^{\frac{1}{2}u_{a_{\ell}}}) \), fix \(r=2^\ell r_0\), take \(\widetilde{\mathcal A}_r\) from Definition~\ref{def:wired-annular-block}, and write \(V=V(\widetilde{\mathcal A}_r)\), \(E=E(\widetilde{\mathcal A}_r)\), \(a=a_\ell\), and \(\delta=a_{\ell+1}\). Give each edge \(e\in E\) weight \(wa_e\), where \(a_e\) is the number of original lattice edges merged into \(e\), so that \(1\le a_e\le3\). The edge coefficients \(a_e=a_{ij}\), for \(e=\{i,j\}\), should not be confused with the vertex \(a\).

For the effective field pinned at \(\delta\), set
\[
Z_r(w):=\mathbb E_{\mu_{\delta,V}^{(wa_e)_{e\in E}}}
\left[e^{\frac{u_a}{2}}\right].
\]
We estimate \(Z_r(w)\) through a mean effective conductance\footnote{We are grateful to Poudevigne for suggesting that we work with the mean effective conductance, using the field on the graph obtained by identifying $a$ and $\delta$. His comments largely inspired the approach developed in Sections~4 and~5, simplifying the argument and its presentation compared with the first arXiv version.}. Identify the two boundary vertices \(a,\delta\) to a single vertex \(\rho\), let \(\mathring V =(V\setminus \{a,\delta\})\cup\{\rho\}\) and \(\mathring W \) be the edge weight obtained by identifying \(a,\delta \). Let \(\mathring u\) be the field on the graph \(\mathring V \) pinned at \(\rho\). Use this field to define the conductances \(W_{ij}e^{\mathring u_{i}+\mathring u_{j}} \) in \eqref{eq:C-from-W-and-U} on the original annulus, where \(a\) and \(\delta\) remain distinct. Let \(K\) be the effective conductance between \(a,\delta \), and write
\[M_r(w)=\mathbb E_{\mu_{\rho,\mathring V}^{\mathring W}}[K].\]
Conditioning on the potential on \(V\setminus\{a,\delta\}\) and taking the Schur complement at \(a,\delta\) bounds \(\mathbb E[e^{u_a/2}\mid\beta_{V\setminus\{a,\delta\}}]\) in terms of \(K^{1/4}\). Applying Jensen's inequality gives, by Lemma~\ref{lem:Z-bound-by-M},
\[Z_r(w)\le \frac{\Gamma(1/4)}{2^{1/4}\sqrt\pi}\,M_r(w)^{1/4}.\]

To bound \(M_r(w)\), choose a deterministic cutoff \(f\) equal to one near the inner boundary and zero near the outer boundary. Derivative w.r.t \(w \) gives Lemma~\ref{lem:M-differential-inequality}:
\[M_r(w)^2 \le (w+1)M_r'(w)D_f(w),\]
where
\[D_f(w)= \mathbb E_{\beta_{\widehat V}} \left[\sum_{\{i,j\}\in E} W_{ij}\left(\frac{C_{ij}}{W_{ij}}\right)^{1/4} (f(i)-f(j))^2\right].\]
Here \(\mathbb E_{\beta_{\widehat V}}\) denotes expectation under the law of \(\beta \) field on \(\widehat{V}=V\setminus \{a,\delta\} \), which also determines \(\mathring u\).

For every edge \(\{i,j\}\in E\) with \(f(i)\ne f(j)\), both endpoints belong to the bulk  \(\mathcal B_r\) of the annular block, defined in \eqref{eq:annular-bulk-definition}. There, using Sabot's polynomial decay Lemma~\ref{lem:sabot-field-moment-bound} \cite{Sabot2021}, we can bound \(\mathbb E[e^{\mathring u_y/2}]\). Fix \(w_0>0 \), Cauchy–Schwarz and the bound \(|f(i)-f(j)|\le2/r\) for the cutoff in \eqref{eq:annular-cutoff} then give
\[D_f(w)\le C_f(w_0)r^{-\alpha_{\mathrm{bulk}}(w_0)} \qquad(w\in[w_0,w_0+1]).\]
Substitution gives \(M_r'(w)\ge r^{\alpha_{\mathrm{bulk}}(w_0)}M_r(w)^2/((w_0+2)C_f(w_0))\). Dividing by \(M_r(w)^2\) and integrating over \([w_0,w_0+1]\) gives \eqref{eq:M-r-polynomial-bound}.

\section{The \(H^{2|2}\)-model on finite graphs}
We consider edge weights of the form \(W_{ij}=wa_{ij}\), where \(w>0\) is a common coupling parameter and \(W=(W_{ij})\) denotes the collection of edge weights. Throughout this section, \(\mathcal G=(V,E)\) is a finite connected simple graph, and \(a_{ij}=a_{ji}>0\) are fixed deterministic parameters for \(\{i,j\}\in E\). We set \(a_{ij}=0\) if \(\{i,j\}\notin E \), and \(a_{ii}=0\). We also regard \(W=(W_{ij})_{i,j\in V}\) as a symmetric matrix, with zero diagonal. All derivatives with respect to \(w\) are taken with \((a_{ij})\) fixed. The superscript \(W\) in \(\nu_V^W\) and \(\mu_{\delta,V}^W\) refers to the collection \(W=(wa_{ij})=(W_{ij})\).

The random-potential formulation on finite graphs was introduced in  \cite[Definition~1 and Theorem~3]{SabotTarresZeng2017}. For a vector \(\eta=(\eta_i)_{i\in V}\in[0,\infty)^V\), we use its extension given in \cite[Lemma~4]{Sabot2019}. This family is also studied as a multivariate reciprocal inverse Gaussian distribution in \cite[Sections~2 and~4]{LetacWesolowski2020}. Set
\[
H_\beta=2\operatorname{diag}\bigl((\beta_i)_{i\in V}\bigr)
-w(a_{ij})_{i,j\in V}.
\]
The law of the random potential \((\beta_i)_{i\in V}\) is the probability measure on \(\mathbb{R}_{+}^{V} \):
\begin{equation}\label{eq:joint-law-of-beta-finite-graph}
\nu_V^{W,\eta}(d\beta)
=\mathbf{1}_{H_\beta>0}
\left(\frac{2}{\pi}\right)^{|V|/2}
\frac{\exp\!\left(
-\frac12\langle\mathbf 1,H_\beta\mathbf 1\rangle
-\frac12\langle\eta,H_\beta^{-1}\eta\rangle
+\langle\eta,\mathbf 1\rangle\right)}
{\sqrt{\det H_\beta}}
\prod_{i\in V}d\beta_i,
\end{equation}
where \(\mathbf 1\) is the vector with all entries equal to one, \(\langle\cdot,\cdot\rangle\) is the Euclidean inner product on \(\mathbb R^V\), and \(H_\beta>0\) means that \(H_\beta\) is positive definite. In particular,
\[
-\frac12\langle\mathbf 1,H_\beta\mathbf 1\rangle
=-\sum_{i\in V}\beta_i+w\sum_{\{i,j\}\in E}a_{ij}.
\]
For \(\lambda=(\lambda_i)_{i\in V}\in[0,\infty)^V\), its Laplace transform is
\begin{equation}\label{eq:Laplace-transform-beta-with-eta}
\begin{aligned}
&\int e^{-\sum_{i\in V}\lambda_i\beta_i}\,\nu_V^{W,\eta}(d\beta)
=\prod_{i\in V}(1+\lambda_i)^{-1/2}\\
&\quad\times\exp\!\left(
-w\sum_{\{i,j\}\in E}a_{ij}
\bigl(\sqrt{(1+\lambda_i)(1+\lambda_j)}-1\bigr)
-\sum_{i\in V}\eta_i\bigl(\sqrt{1+\lambda_i}-1\bigr)
\right).
\end{aligned}
\end{equation}
We write \(\nu_V^W=\nu_V^{W,0}\). Unless otherwise specified, \(\eta\) is held fixed when differentiating with respect to \(w\).

Turning to the pinned effective field, take \(\eta=0\) and fix a vertex \(\delta\in V\). Under \(\nu_V^W=\nu_V^{W,0}\), let \(G=H_\beta^{-1}\) and define
\[
e^{u_i}=\frac{G(\delta,i)}{G(\delta,\delta)},\qquad i\in V,
\]
so that \(u_\delta=0\). By \cite[Section~3.1, Theorem~C]{Sabot2019}, the joint law of \((u_i)_{i\ne\delta}\) is
\begin{equation}\label{eq:joint-law-of-u-pinned-at-delta}
\mu_{\delta,V}^W(du)
=
\frac{1}{(2\pi)^{(|V|-1)/2}}
\exp\!\left(
-w\sum_{\{i,j\}\in E}a_{ij}\bigl(\cosh(u_i-u_j)-1\bigr)
\right)
\sqrt{D(W,u)}
\prod_{i\in V\setminus\{\delta\}}e^{-u_i}\,du_i,
\end{equation}
where
\begin{equation}\label{eq:spanning-tree-polynomial}
\begin{aligned}
D(W,u)
&=\sum_{T\in\mathcal T(\mathcal G)}
\prod_{\{i,j\}\in T}wa_{ij}e^{u_i+u_j}\\
&=w^{|V|-1}\sum_{T\in\mathcal T(\mathcal G)}
\prod_{\{i,j\}\in T}a_{ij}e^{u_i+u_j}.
\end{aligned}
\end{equation}
Here \(\mathcal T(\mathcal G)\) denotes the set of spanning trees of \(\mathcal G\). The probability measure \(\mu_{\delta,V}^W\) is the effective \(H^{2|2}\) measure on \(\mathcal G\), with edge weights \(wa_{ij}\) and pinning \(u_\delta=0\). Its density is the effective field density in \cite[Section~1.2, equations~(1.1)--(1.5)]{DisertoriSpencerZirnbauer2010}: take their field \(t=u\) on \(V\setminus\{\delta\}\), their edge couplings \(W_{ij}\), and their pinning parameters \(\varepsilon_i=W_{i\delta}\), and apply the matrix-tree theorem to their determinant. Its interpretation as the mixing measure of the time-changed VRJP is given in \cite[Theorem~2]{SabotTarres2015}, and its representation through Green-function ratios in \cite[Theorem~3]{SabotTarresZeng2017}.

\subsection{Marginal and conditional law of \(\beta \) field}
For \(\mu,\lambda>0\), let \(\operatorname{IG}(\mu,\lambda)\) denote the inverse-Gaussian distribution with mean \(\mu\) and shape \(\lambda\), whose density is
\[\operatorname{IG}(\mu,\lambda)(dt) =\sqrt{\frac{\lambda}{2\pi t^3}} \exp\!\left[-\frac{\lambda(t-\mu)^2}{2\mu^2t}\right]dt, \qquad t>0.\]
We write \(\operatorname{RIG}(\mu,\lambda)\) for the law of \(T^{-1}\) when \(T\sim\operatorname{IG}(\mu,\lambda)\). A change of variables gives the scaling identity
\begin{equation}\label{eq:inverse-gaussian-scaling}
T\sim\operatorname{IG}(\mu,\lambda)
\quad\Longrightarrow\quad
aT\sim\operatorname{IG}(a\mu,a\lambda),
\qquad a>0.
\end{equation}
In particular, for \(s>0\),
\begin{equation}\label{eq:inverse-gaussian-density}
\operatorname{IG}(1,s)(dt)
=\sqrt{\frac{s}{2\pi}}\,t^{-3/2}
\exp\!\left[-\frac{s}{2}(t+t^{-1}-2)\right]dt,
\qquad t>0.
\end{equation}
Its first two moments are
\begin{equation}\label{eq:inverse-gaussian-first-two-moments}
\int t\,\operatorname{IG}(1,s)(dt)=1,
\qquad
\int t^2\,\operatorname{IG}(1,s)(dt)=1+\frac1s.
\end{equation}
If \(T\sim\operatorname{IG}(1,s)\), then \(s/T\) has law \(\operatorname{RIG}(1/s,1)\), with density
\begin{equation}\label{eq:reciprocal-inverse-gaussian-density}
g_s(x)=\frac1{\sqrt{2\pi x}}
\exp\!\left[-\frac{(x-s)^2}{2x}\right],
\qquad x>0.
\end{equation}
We also use this formula for \(s=0\); in that case \(g_0\) is the Gamma density with shape \(1/2\) and rate \(1/2\).

We recall the marginal and conditional laws of the potential from  \cite[Lemma~5]{Sabot2019}; see also \cite[Propositions~4.2 and~4.3]{LetacWesolowski2020}. We extend the notation \(\nu_V^{W,\eta}\) to any symmetric matrix \(W\) with nonnegative entries, using the density \eqref{eq:joint-law-of-beta-finite-graph} with \(H_\beta=2\operatorname{diag}(\beta)-W\). Allowing nonzero diagonal entries in \(W\) amounts to a deterministic shift of the random potential: if \(W^\circ\) agrees with \(W\) off the diagonal and has zero diagonal, and \(\beta_i^\circ=\beta_i-W_{ii}/2\), then
\[
2\operatorname{diag}(\beta)-W
=2\operatorname{diag}(\beta^\circ)-W^\circ,
\qquad
\beta\sim\nu_V^{W,\eta}
\ \Longleftrightarrow\ 
\beta^\circ\sim\nu_V^{W^\circ,\eta}.
\]
For a matrix \(M\) indexed by \(V\) and subsets \(A,B\subset V\), write \(M_{AB}=(M_{ij})_{i\in A,j\in B}\) for its submatrix with rows in \(A\) and columns in \(B\). We abbreviate \(M_{A\{j\}}\) and \(M_{\{i\}B}\) to \(M_{Aj}\) and \(M_{iB}\), respectively, if \(A=B \), we write \(M_{A} \) for \(M_{AA} \) and let \(\mathbf 1_A\) denote the all-ones vector on \(A\). 

\begin{prop}\label{prop:potential-marginal-conditional-laws}
Let \(\beta\sim\nu_V^{W,\eta}\), where \(W\) is symmetric with nonnegative entries and \(\eta\in[0,\infty)^V\). Let \(\varnothing\ne U\subsetneq V\), and set \(U^c=V\setminus U\). Then
\begin{enumerate}
\item The marginal law of \(\beta_{U^c}=(\beta_i)_{i\in U^c}\) is
\begin{equation}\label{eq:beta-marginal-law}
\beta_{U^c}\sim\nu_{U^c}^{W_{U^cU^c},\widehat\eta},
\qquad
\widehat\eta=\eta_{U^c}+W_{U^cU}\mathbf 1_U.
\end{equation}
\item Given \(\beta_{U^c}\), define
\[
\widehat G_{U^c}=\bigl(2\operatorname{diag}(\beta_{U^c})-W_{U^cU^c}\bigr)^{-1},
\]
and set
\begin{equation}\label{eq:conditional-W-and-eta}
\check W=W_{UU}+W_{UU^c}\widehat G_{U^c}W_{U^cU},
\qquad
\check\eta=\eta_U+W_{UU^c}\widehat G_{U^c}\eta_{U^c}.
\end{equation}
Then, almost surely, the conditional law of \(\beta_U\) is
\begin{equation}\label{eq:beta-conditional-law}
\mathcal L(\beta_U\mid\beta_{U^c})
=\nu_U^{\check W,\check\eta}.
\end{equation}
In particular, \(\check{\eta}=0 \) when \(\eta=0 \). Setting
\[
\check H_U
=2\operatorname{diag}(\beta_U)-\check W
=(H_\beta)_{UU}-W_{UU^c}\bigl((H_\beta)_{U^cU^c}\bigr)^{-1}W_{U^cU},
\]
its conditional density is
\begin{equation}\label{eq:beta-conditional-density}
\left(\frac{2}{\pi}\right)^{|U|/2}
\frac{\exp\!\left(
-\frac12\langle\mathbf 1_U,\check H_U\mathbf 1_U\rangle
-\frac12\langle\check\eta,\check H_U^{-1}\check\eta\rangle
+\langle\check\eta,\mathbf 1_U\rangle
\right)}{\sqrt{\det\check H_U}}
\prod_{i\in U}d\beta_i
\end{equation}
on \(\{\check H_U>0\}\), and zero elsewhere.
\end{enumerate}
\end{prop}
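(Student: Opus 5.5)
The plan is to compute the joint density directly and factor it, using block Gaussian elimination (Schur complements) for the determinant and the two quadratic forms. Throughout, write \(H=H_\beta=2\operatorname{diag}(\beta)-W\) in block form with respect to \(V=U\sqcup U^c\), set \(A=H_{U^cU^c}\) and \(\check H_U=H_{UU}-W_{UU^c}A^{-1}W_{U^cU}\), and note \(H_{UU^c}=-W_{UU^c}\).

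\textbf{Step 1: positivity and determinant.} Since \(A\) is a principal submatrix, \(H>0\) holds if and only if \(A>0\) and \(\check H_U>0\). Moreover \(\det H=\det A\cdot\det\check H_U\).

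\textbf{Step 2: the quadratic forms.} For \(\langle\mathbf 1,H\mathbf 1\rangle\), expand the block quadratic form and complete the square in the \(U\) variables:
\[
\langle\mathbf 1,H\mathbf 1\rangle=\langle\mathbf 1_U,\check H_U\mathbf 1_U\rangle+\langle x,Ax\rangle ,
\]
where \(x=\mathbf 1_{U^c}-A^{-1}W_{U^cU}\mathbf 1_U\). Then
\[
\langle x,Ax\rangle=\langle\mathbf 1_{U^c},A\mathbf 1_{U^c}\rangle-2\langle\mathbf 1_{U^c},W_{U^cU}\mathbf 1_U\rangle+\langle\widehat\eta-\eta_{U^c},A^{-1}(\widehat\eta-\eta_{U^c})\rangle ,
\]
with \(\widehat\eta=\eta_{U^c}+W_{U^cU}\mathbf 1_U\). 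For \(\langle\eta,H^{-1}\eta\rangle\), use the block inverse:
\[
\langle\eta,H^{-1}\eta\rangle=\langle\eta_{U^c},A^{-1}\eta_{U^c}\rangle+\langle\check\eta,\check H_U^{-1}\check\eta\rangle ,
\]
with \(\check\eta=\eta_U+W_{UU^c}A^{-1}\eta_{U^c}\). This is the standard identity \(\langle\eta,H^{-1}\eta\rangle=\langle\eta_2,A^{-1}\eta_2\rangle+\langle y,S^{-1}y\rangle\) for the Schur complement \(S\); the sign comes out right because the off-diagonal block is \(-W\).

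\textbf{Step 3: collect the exponent.} Gathering terms, the exponent splits as \(\Phi_{U^c}+\Phi_U\). The \(U^c\)-part is
\[
\Phi_{U^c}=-\tfrac12\langle\mathbf 1,A\mathbf 1\rangle-\tfrac12\langle\widehat\eta,A^{-1}\widehat\eta\rangle+\langle\widehat\eta,\mathbf 1\rangle .
\]
The cross terms in \(\langle\widehat\eta,A^{-1}\widehat\eta\rangle\) recombine with \(-\langle\check\eta-\eta_U,\mathbf 1_U\rangle\) and with \(\langle\widehat\eta,\mathbf 1\rangle=\langle\eta_{U^c},\mathbf 1\rangle+\langle\mathbf 1_U,W_{UU^c}\mathbf 1\rangle\). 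The \(U\)-part is exactly the exponent in \eqref{eq:beta-conditional-density}, namely
\[
\Phi_U=-\tfrac12\langle\mathbf 1_U,\check H_U\mathbf 1_U\rangle-\tfrac12\langle\check\eta,\check H_U^{-1}\check\eta\rangle+\langle\check\eta,\mathbf 1_U\rangle .
\]
The prefactor \((2/\pi)^{|V|/2}\) splits as well, so the density of \(\nu_V^{W,\eta}\) factors as the density of \(\nu_{U^c}^{W_{U^cU^c},\widehat\eta}\) at \(\beta_{U^c}\) times the expression \eqref{eq:beta-conditional-density} in \(\beta_U\).

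\textbf{Step 4: normalization.} Fix \(\beta_{U^c}\) with \(A>0\). The expression \eqref{eq:beta-conditional-density} is the density of \(\nu_U^{\check W,\check\eta}\), with \(\check W\) symmetric and entrywise nonnegative because \(A^{-1}\ge0\) entrywise (Neumann series for an M-matrix). Its possible diagonal is absorbed by the deterministic shift noted before the proposition, so its \(\beta_U\)-integral is \(1\), granted that \(\nu^{W,\eta}\) is a probability for every such \(W\). The marginal claim \eqref{eq:beta-marginal-law} then follows by integrating out \(\beta_U\). The conditional claim \eqref{eq:beta-conditional-law} follows from the factorization, and \(\check\eta=0\) when \(\eta=0\) is immediate.

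The main obstacle is the normalization of \(\nu^{W,\eta}\) for general nonnegative \(W\) with zero diagonal, which is cited from \cite[Lemma~4]{Sabot2019}. If one wants independence from that citation, it can be checked via the Laplace transform \eqref{eq:Laplace-transform-beta-with-eta}, by induction on \(|V|\) using exactly the above factorization with \(|U|=1\). In that case \(\nu_{\{i\}}\) is an explicit reciprocal inverse Gaussian law whose normalization is elementary. The remaining bookkeeping of the linear \(\eta\)-terms in Step 3 is routine but error-prone.
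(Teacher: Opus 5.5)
Your proof is correct. The paper gives no argument of its own for this proposition: it only recalls it from \cite[Lemma~5]{Sabot2019} and \cite[Propositions~4.2 and~4.3]{LetacWesolowski2020}. So your Schur-complement factorization is a self-contained replacement rather than a variant of something in the text.

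The bookkeeping in Step~3 works out. The term $\langle\eta_{U^c},A^{-1}W_{U^cU}\mathbf 1_U\rangle$ enters $\Phi_{U^c}$ with coefficient $-1$, from $-\tfrac12\langle\widehat\eta,A^{-1}\widehat\eta\rangle$. It enters $\Phi_U$ with coefficient $+1$, from $\langle\check\eta,\mathbf 1_U\rangle$. The term $\langle\mathbf 1_{U^c},W_{U^cU}\mathbf 1_U\rangle$ produced by $\langle x,Ax\rangle$ is exactly the extra part of $\langle\widehat\eta,\mathbf 1_{U^c}\rangle$. Nothing in your algebra uses a zero diagonal, so the general-$W$ case of the statement is covered.

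For part~(i) alone there is a shorter route. Put $\lambda_U=0$ in \eqref{eq:Laplace-transform-beta-with-eta}. Edges inside $U$ then drop out, and each cross edge $\{i,j\}$ with $i\in U^c$, $j\in U$ contributes $W_{ij}(\sqrt{1+\lambda_i}-1)$, i.e.\ it acts as the extra pinning $W_{U^cU}\mathbf 1_U$. Part~(ii) is then the quotient of the two densities, which is exactly your Steps~2--3. However, that Laplace formula at $\lambda=0$ is itself the normalization of $\nu^{W,\eta}$. So it does not remove the dependence you flag as the main obstacle.

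Your factorization does remove it, and without any Laplace transform, contrary to how you phrase it. For $U=\{i\}$ the conditional factor is $g_{\check\eta_i}(x)\,dx$ with $x=2\beta_i-\check W_{ii}$. This has mass one for every $\check\eta_i\ge0$; the case $\check\eta_i=0$ is the Gamma law. Tonelli and induction on $|V|$ then give the normalization for all nonnegative symmetric $W$ and all $\eta\ge0$.

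In short, your route costs more algebra than the Laplace shortcut for~(i). In exchange it delivers (i), (ii), and the normalization of the whole family from the elementary one-vertex case.
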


\begin{coro}\label{cor:one-vertex-inverse-gaussian-laws}
Let \(\beta\sim\nu_V^{W,\eta}\), with \(W\) symmetric and nonnegative, \(\eta\in[0,\infty)^V\), and \(|V|\ge2\). Fix \(i\in V\), set \(B=V\setminus\{i\}\), and write \(G=H_\beta^{-1}\).

The random variable \(2\beta_i-W_{ii}\) has density \(g_{r_i}\) defined in \eqref{eq:reciprocal-inverse-gaussian-density}, where
\[r_i=\eta_i+\sum_{j\in B}W_{ij}.\]
If \(r_i>0\), then
\begin{equation}\label{eq:one-vertex-marginal-IG}
2\beta_i-W_{ii}\sim\operatorname{RIG}(1/r_i,1),
\qquad
\frac{r_i}{2\beta_i-W_{ii}}\sim\operatorname{IG}(1,r_i).
\end{equation}
In particular, when \(W_{ii}=0\) and \(r_i>0\), \(\beta_i\sim\operatorname{RIG}(2/r_i,2)\).

Given \(\beta_B\), set
\[\widehat G_B=\bigl((H_\beta)_{BB}\bigr)^{-1}, \qquad \check\eta_i=\eta_i+W_{iB}\widehat G_B\eta_B,\]
and
\[S_i=2\beta_i-W_{ii}-W_{iB}\widehat G_BW_{Bi}.\]
The conditional density of \(S_i\) is \(g_{\check\eta_i}\), and
\begin{equation}\label{eq:one-vertex-green-pinning-identity}
(G\eta)_i=\frac{\check\eta_i}{S_i}.
\end{equation}
Consequently, on \(\{\check\eta_i>0\}\),
\begin{equation}\label{eq:one-vertex-conditional-IG}
\mathcal L\bigl((G\eta)_i\mid\beta_B\bigr)
=\operatorname{IG}(1,\check\eta_i).
\end{equation}
On \(\{\check\eta_i=0\}\), one has \((G\eta)_i=0\).
\end{coro}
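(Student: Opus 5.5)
The plan is to apply Proposition~\ref{prop:potential-marginal-conditional-laws} twice, once with \(U^c=\{i\}\) for the marginal law and once with \(U=\{i\}\) for the conditional law. Each application reduces the claim to an explicit one-dimensional computation of the density \eqref{eq:joint-law-of-beta-finite-graph}. The hypothesis \(|V|\ge2\) guarantees that both choices \(\{i\}\) and \(B\) are nonempty proper subsets of \(V\), so the proposition applies.

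\emph{One-vertex computation.} On a single vertex with self-weight \(w_0\ge0\) and pinning \(\rho\ge0\), write \(x=2\beta-w_0\), which is the one-by-one matrix \(H_\beta\). Then the density \eqref{eq:joint-law-of-beta-finite-graph} reads
\[
\left(\tfrac{2}{\pi}\right)^{1/2}x^{-1/2}\exp\!\left(-\tfrac{x}{2}-\tfrac{\rho^2}{2x}+\rho\right)\mathbf 1_{x>0}\,d\beta .
\]
Substituting \(d\beta=dx/2\), the density of \(x\) becomes \((2\pi x)^{-1/2}\exp(-(x-\rho)^2/(2x))\), which is exactly \(g_\rho\) from \eqref{eq:reciprocal-inverse-gaussian-density}.

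\emph{Marginal law.} Take \(U=B\) and \(U^c=\{i\}\) in \eqref{eq:beta-marginal-law}. This gives \(\beta_i\sim\nu_{\{i\}}^{W_{ii},\widehat\eta}\) with \(\widehat\eta=\eta_i+\sum_{j\in B}W_{ij}=r_i\). By the one-vertex computation, \(2\beta_i-W_{ii}\) has density \(g_{r_i}\).

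Now suppose \(r_i>0\). The density \(g_{r_i}\) is by definition the law of \(r_i/T\) with \(T\sim\operatorname{IG}(1,r_i)\). Hence \(r_i/(2\beta_i-W_{ii})\sim\operatorname{IG}(1,r_i)\). By the scaling identity \eqref{eq:inverse-gaussian-scaling}, \(T/r_i\sim\operatorname{IG}(1/r_i,1)\), so \(2\beta_i-W_{ii}=(T/r_i)^{-1}\sim\operatorname{RIG}(1/r_i,1)\). When \(W_{ii}=0\) we have \(\beta_i=(2T/r_i)^{-1}\), and scaling gives \(2T/r_i\sim\operatorname{IG}(2/r_i,2)\), so \(\beta_i\sim\operatorname{RIG}(2/r_i,2)\).

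\emph{Conditional law.} Take \(U=\{i\}\) in \eqref{eq:conditional-W-and-eta}. Then \(\check W=W_{ii}+W_{iB}\widehat G_BW_{Bi}\) and \(\check\eta=\check\eta_i\) as defined in the statement. The conditional Schur complement is \(\check H_U=2\beta_i-\check W=S_i\). Given \(\beta_B\), the map \(\beta_i\mapsto S_i\) is an affine shift with \(d\beta_i=dS_i/2\). The one-vertex computation applied to \eqref{eq:beta-conditional-density} therefore shows that the conditional density of \(S_i\) is \(g_{\check\eta_i}\).

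\emph{Green-function identity.} For \eqref{eq:one-vertex-green-pinning-identity}, use block inversion of \(H_\beta\) with respect to the split \(\{i\}\cup B\). The off-diagonal block of \(H_\beta\) is \(-W_{iB}\), so
\[
G_{ii}=S_i^{-1},\qquad G_{iB}=S_i^{-1}W_{iB}\widehat G_B .
\]
Hence
\[
(G\eta)_i=S_i^{-1}\bigl(\eta_i+W_{iB}\widehat G_B\eta_B\bigr)=\check\eta_i/S_i .
\]
On \(\{\check\eta_i>0\}\), the conditional density \(g_{\check\eta_i}\) of \(S_i\) means \(S_i\) is distributed as \(\check\eta_i/T\) with \(T\sim\operatorname{IG}(1,\check\eta_i)\). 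Therefore \((G\eta)_i=\check\eta_i/S_i\) is conditionally \(\operatorname{IG}(1,\check\eta_i)\), which is \eqref{eq:one-vertex-conditional-IG}. On \(\{\check\eta_i=0\}\), \(S_i>0\) almost surely, so \((G\eta)_i=0\).

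No step is a genuine obstacle. The places that need care are the Jacobian factor \(1/2\) from \(d\beta_i=dS_i/2\), the sign convention in the block inverse, and the fact that \(S_i>0\) holds almost surely, so that the quotient \(\check\eta_i/S_i\) is well defined.
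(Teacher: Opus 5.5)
Your proposal is correct and follows essentially the same route as the paper. You use the one-vertex change of variables \(x=2\beta-w\) to identify \(g_r\), then apply Proposition~\ref{prop:potential-marginal-conditional-laws} with \(U^c=\{i\}\) for the marginal and with \(U=\{i\}\) for the conditional law; your block-inverse derivation of \((G\eta)_i=\check\eta_i/S_i\) is the same Schur-complement computation the paper does by solving \(H_\beta v=\eta\) on \(B\) and substituting into row \(i\).
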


\begin{proof}
For a one-vertex law \(\nu_{\{i\}}^{w,r}\), the change of variables \(x=2\beta_i-w\) in \eqref{eq:joint-law-of-beta-finite-graph} gives the density
\[\frac1{\sqrt{2\pi x}} \exp\!\left(-\frac x2-\frac{r^2}{2x}+r\right) =g_r(x),\qquad x>0.\]
Taking \(U^c=\{i\}\) in Proposition~\ref{prop:potential-marginal-conditional-laws} therefore gives the marginal law. Taking \(U=\{i\}\) gives the conditional density of \(S_i\).

To identify \((G\eta)_i\), write \(v=G\eta\). The equations \(H_\beta v=\eta\) on \(B\) give
\[v_B=\widehat G_B(\eta_B+W_{Bi}v_i).\]
In particular, \(S_iv_i=\check\eta_i\), which gives \eqref{eq:one-vertex-green-pinning-identity}. The inverse-Gaussian laws follow from \eqref{eq:reciprocal-inverse-gaussian-density} and \eqref{eq:inverse-gaussian-scaling}.
\end{proof}

\subsection{Infinite graphs and wired boundary conditions}\label{subsec:infinite-graphs-wired-coupling}
On the graphs \(\widetilde V_n\) defined in Section~2, and denote their weight matrices by \(W^{(n)}\). Set
\[\eta_i^{(n)}=W^{(n)}_{i\delta_n} =\sum_{j\notin V_n}W_{ij}, \qquad i\in V_n.\]
By Proposition~\ref{prop:potential-marginal-conditional-laws}, the marginal law of \(\nu_{\widetilde{V}_{n}}^{W^{(n)}} \) on \(V_n\) is \(\nu_{V_n}^{W_{V_nV_n},\eta^{(n)}}\). These laws are consistent and give the infinite-volume law characterized by \eqref{eq:Laplace-transform-beta-on-U}.

The potentials can be realized on one probability space as follows. Sample \(\beta\sim\nu_V^W\) and, independently, \(\gamma\sim\operatorname{Gamma}(1/2,1)\). With \(\widehat{G}^{(n)}_\beta\) as in \eqref{eq:dirichlet-green-definition}, set
\begin{equation}\label{eq:wired-potential-coupling}
\beta_i^{(n)}=\beta_i\quad(i\in V_n),
\qquad
\beta_{\delta_n}^{(n)}
=\gamma+\frac12
\left\langle\eta^{(n)},
\widehat{G}^{(n)}_\beta\eta^{(n)}\right\rangle.
\end{equation}
Then \(\beta^{(n)}\) has law \(\nu_{\widetilde V_n}^{W^{(n)}}\), by the coupling construction in \cite[Section~4.3]{Sabot2019}. By Schur complement, the corresponding Green matrix satisfies
\begin{equation}\label{eq:wired-field-from-green-function}
G_\beta^{(n)}(\delta_n,\delta_n)=\frac1{2\gamma},
\qquad
e^{u_i^{(n)}}
=\frac{G_\beta^{(n)}(i,\delta_n)}
{G_\beta^{(n)}(\delta_n,\delta_n)}
=\left(\widehat{G}^{(n)}_\beta\eta^{(n)}\right)_i,
\quad i\in V_n.
\end{equation}
Thus \(u^{(n)}\), with \(u_{\delta_n}^{(n)}=0\), has law \(\mu_{\delta_n,\widetilde V_n}^{W^{(n)}}\). In particular, all these wired fields are functions of the same infinite-volume potential \(\beta\). By \cite[Theorem~1(ii)]{Sabot2019}, \(\exp(u_i^{(n)})\) converges almost surely to \(\psi(i)\). 

\subsection{Change of root}
Returning to finite graphs, we next describe the change of root for \(\mu_{\delta,V}^W\). For \(x\in V\), the field \(v_i=u_i-u_x\) satisfies \(v_x=0\). Comparing the densities in \eqref{eq:joint-law-of-u-pinned-at-delta} gives the following rerooting identity, see e.g. \cite{HuangZeng2026}.

\begin{lemm}\label{lem:rerooting}
For every \(x\in V\) and every nonnegative measurable function \(F\) on \(\{v\in\mathbb R^V:v_x=0\}\),
\begin{equation}\label{eq:rerooting}
\int e^{u_x}F\bigl((u_i-u_x)_{i\in V}\bigr)
\,\mu_{\delta,V}^W(du)
=
\int F(v)\,\mu_{x,V}^W(dv).
\end{equation}
In particular, \(\mathbb{E}_{\mu_{\delta,V}^{W}}(e^{u_{x}}) =1 \) for all \(x \).
\end{lemm}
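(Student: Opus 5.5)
The plan is to compare the two densities directly: write the left-hand side of \eqref{eq:rerooting} as an integral against the explicit density \eqref{eq:joint-law-of-u-pinned-at-delta}, and change variables \(v_i=u_i-u_x\) for \(i\in V\). This map is a translation of coordinates on \(\mathbb R^{V}\). It sends the coordinates \((u_i)_{i\ne\delta}\), with \(u_\delta=0\), bijectively onto \((v_i)_{i\ne x}\), with \(v_x=0\); explicitly \(v_\delta=-u_x\) and \(u_i=v_i-v_\delta\). A translation composed with a relabeling of the free coordinate has Jacobian of absolute value \(1\). Indeed, the linear map \((u_i)_{i\neq\delta}\mapsto(v_i)_{i\neq x}\) is unimodular: subtracting the \(x\)-coordinate from the others and replacing it by its negative has determinant \(\pm1\). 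So \(\prod_{i\ne\delta}du_i=\prod_{i\ne x}dv_i\).

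Next I check each factor of the density under the substitution.
\begin{itemize}
\item The interaction term \(\sum a_{ij}(\cosh(u_i-u_j)-1)\) depends only on differences, so it is unchanged.
\item For the spanning tree polynomial \eqref{eq:spanning-tree-polynomial}, each tree has \(|V|-1\) edges and every vertex \(i\) has degree \(d_T(i)\). Hence \(\prod_{\{i,j\}\in T}e^{u_i+u_j}=e^{\sum_i d_T(i)u_i}\). Substituting \(u_i=v_i+u_x\) gives
\[
\prod_{\{i,j\}\in T}e^{u_i+u_j}=e^{2(|V|-1)u_x}\prod_{\{i,j\}\in T}e^{v_i+v_j},
\]
since \(\sum_i d_T(i)=2(|V|-1)\). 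Therefore \(D(W,u)=e^{2(|V|-1)u_x}D(W,v)\), and \(\sqrt{D(W,u)}=e^{(|V|-1)u_x}\sqrt{D(W,v)}\).
\item The product \(\prod_{i\ne\delta}e^{-u_i}=\prod_{i\in V}e^{-u_i}\) (using \(u_\delta=0\)) equals \(e^{-|V|u_x}\prod_{i\in V}e^{-v_i}=e^{-|V|u_x}\prod_{i\ne x}e^{-v_i}\).
\end{itemize}

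Collecting the powers of \(e^{u_x}\): the extra weight \(e^{u_x}\) on the left-hand side, times \(e^{(|V|-1)u_x}\), times \(e^{-|V|u_x}\), gives exponent \(0\). Thus \(e^{u_x}\mu^W_{\delta,V}(du)\), pushed forward by \(u\mapsto v\), is exactly the density of \(\mu^W_{x,V}(dv)\) in \eqref{eq:joint-law-of-u-pinned-at-delta} with pinning at \(x\). The normalising constant \((2\pi)^{-(|V|-1)/2}\) is the same for both. Integrating against \(F(v)\) gives \eqref{eq:rerooting} by Tonelli, since \(F\ge0\).

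The particular case follows by taking \(F\equiv1\), since \(\mu^W_{x,V}\) is a probability measure. The only point requiring care is the degree-sum bookkeeping in the spanning tree polynomial. This is routine, so I expect no real obstacle. One should also note the trivial case \(x=\delta\), where the map is the identity.
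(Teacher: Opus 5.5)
Your proposal is correct and is essentially the paper's proof: you use the same change of variables \(v_i=u_i-u_x\) with unit Jacobian and the same invariance of the \(\cosh\) term. You also pick up the same factors, \(e^{2(|V|-1)u_x}\) from the spanning-tree polynomial and \(e^{-|V|u_x}\) from \(\prod_{i}e^{-u_i}\), so the pushforward of \(\mu^W_{\delta,V}\) is \(e^{-u_x}\mu^W_{x,V}\) and the extra weight \(e^{u_x}\) cancels it; the paper only writes the same bookkeeping in terms of \(v_\delta=-u_x\).
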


\begin{proof}
The change of variables \(v_i=u_i-u_x\) has Jacobian \( 1\) and \(u_x=-v_\delta\). The cosh term in \eqref{eq:joint-law-of-u-pinned-at-delta} is unchanged, while
\[
D(W,u)=e^{-2(|V|-1)v_\delta}D(W,v),
\qquad
\sum_{i\in V}u_i=\sum_{i\in V}v_i-|V|v_\delta.
\]
Consequently, the image of \(\mu_{\delta,V}^W\) under this change of variables is \(e^{v_\delta}\mu_{x,V}^W(dv)\). Multiplication by \(e^{u_x}=e^{-v_\delta}\) gives \eqref{eq:rerooting}.
\end{proof}

\subsection{Distribution and moments of \(G(\delta,\delta) \)}
We next record distributional identities that give uniform fractional-moment bounds for the Green function in \cite[Section~3.1, Theorem~C]{Sabot2019}.

\begin{lemm}\label{lem:green-fractional-moments}
Let \(\beta\sim\nu_V^W\), and set \(G=H_\beta^{-1}\). Then
\begin{enumerate}
\item For every \(\delta\in V\), \(\gamma_\delta=\frac{1}{2G(\delta,\delta)} \) has the \(\text{Gamma}(\frac{1}{2},1) \) distribution, whose density is \(e^{-t}/\sqrt{\pi t}\) on \(t>0\). Moreover, \(G(\delta,\delta)\) is independent of \((\beta_i)_{i\ne\delta}\), and hence of the field
\(u_i=\log\frac{G(\delta,i)}{G(\delta,\delta)}. \)
\item For every deterministic vector \(q\in[0,\infty)^V\) with \(\sum_{i\in V}q_i=1\),
\(\frac{1}{2\langle q,Gq\rangle} \sim\operatorname{Gamma}\!\left(\frac12,1\right). \)

\item For \(0<s<1/2\), set \(
C_s=\frac{\Gamma(\frac12-s)}{2^s\sqrt{\pi}}.
\) Then, for every \(a\in V\),
\begin{equation}\label{eq:green-diagonal-moment}
\mathbb E_{\nu_V^W}\bigl[G(a,a)^s\bigr]=C_s.
\end{equation}
Furthermore, for all deterministic vectors \(\psi,\phi\in\mathbb R^V\),
\begin{equation}\label{eq:green-bilinear-moment}
\mathbb E_{\nu_V^W}
\bigl[|\langle\psi,G\phi\rangle|^s\bigr]
\le C_s\|\psi\|_1^s\|\phi\|_1^s,
\qquad
\|\psi\|_1=\sum_{i\in V}|\psi_i|.
\end{equation}
\end{enumerate}

Moreover, for any \(\eta\in[0,\infty)^V\), if \(\beta\sim\nu_V^{W,\eta}\), then, for nonnegative deterministic vectors \(p,q\),
\begin{equation}\label{eq:green-bilinear-moment-with-eta}
\mathbb E_{\nu_V^{W,\eta}}\!\left[
\bigl(p^TH_\beta^{-1}q\bigr)^s
\right]
\le C_s\|p\|_1^s\|q\|_1^s,
\qquad 0<s<\frac12.
\end{equation}
The bounds \eqref{eq:green-bilinear-moment} and \eqref{eq:green-bilinear-moment-with-eta} also hold when \(W\) is any symmetric matrix with nonnegative entries.
\end{lemm}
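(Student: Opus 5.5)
Part (1) comes from the Schur complement at \(\delta\) together with Corollary~\ref{cor:one-vertex-inverse-gaussian-laws}. Take \(\eta=0\) and \(B=V\setminus\{\delta\}\). Then \(G(\delta,\delta)=1/S_\delta\), where
\[S_\delta=2\beta_\delta-W_{\delta B}\widehat G_BW_{B\delta}.\]
By Proposition~\ref{prop:potential-marginal-conditional-laws}, the conditional law of \(\beta_\delta\) given \(\beta_B\) is \(\nu_{\{\delta\}}^{\check W,\check\eta}\) with \(\check\eta_\delta=0\). Consequently the conditional density of \(S_\delta\) is \(g_0\), the Gamma\((1/2,1/2)\) density. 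This law does not depend on \(\beta_B\), so \(S_\delta\) is independent of \(\beta_B\), and \(\gamma_\delta=S_\delta/2\sim\text{Gamma}(1/2,1)\).

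For the independence from the field \(u\), compute
\[e^{u_i}=\frac{G(\delta,i)}{G(\delta,\delta)}=(\widehat G_BW_{B\delta})_i,\]
which is the Schur complement identity already used in the proof of Corollary~\ref{cor:one-vertex-inverse-gaussian-laws}. This is a function of \(\beta_B\) alone, so \(u\) is independent of \(G(\delta,\delta)\).

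Part (2) reduces to part (1) by adding an auxiliary vertex. Add a vertex \(\rho\) joined to each \(i\) with weight \(tq_i\), and potential \(\beta_\rho\) chosen so that \(\rho\) carries the law \(\nu_{V\cup\{\rho\}}^{W'}\). By Proposition~\ref{prop:potential-marginal-conditional-laws}(1), the marginal on \(V\) is \(\nu_V^{W,tq}\), not \(\nu_V^{W}\). A cleaner route is the Laplace transform.

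\begin{itemize}
\item Take \(\lambda=\epsilon q/\ldots\). Under \(\nu_V^W\), apply the deformation \(\beta\mapsto\beta\) with tilt \(e^{-\langle\lambda,\beta\rangle}\). Since \(\sum q_i=1\), the one-dimensional law reduces to that of a single vertex.
\item Concretely, compute \(\mathbb E[\exp(-\frac{\theta}{2}\cdot\frac1{\langle q,Gq\rangle})]\). Use the Gaussian/supersymmetric integral representation behind~\eqref{eq:joint-law-of-beta-finite-graph}: in \(\nu_V^{W,\eta}\) take \(\eta=\sqrt{\theta}\,q\)-type deformations. The normalization of \(\nu_V^{W,\eta}\) is \(1\) for every \(\eta\), which gives
\[\mathbb E_{\nu_V^W}\bigl[e^{-\frac12\langle\eta,G\eta\rangle+\langle\eta,\mathbf 1\rangle}\bigr]=1.\]
\item With \(\eta=\tau q\) this reads \(\mathbb E[e^{-\frac{\tau^2}{2}\langle q,Gq\rangle}]=e^{-\tau}\).
\end{itemize}

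This identifies the Laplace transform of \(\langle q,Gq\rangle\) as that of the Lévy \(1/2\)-stable law, i.e. the law of \(1/(2\,\text{Gamma}(1/2,1))\). Inverting gives the claim. I expect this identification via \(\eta\)-normalization to be the crux. The same argument with \(\eta\) replaced by \(\eta+\tau q\) under \(\nu_V^{W,\eta}\) gives
\[\mathbb E[e^{-\frac{\tau^2}2\langle q,Gq\rangle-\tau\langle q,G\eta\rangle}]=e^{-\tau}.\]
This shows that \(\langle q,Gq\rangle\) is stochastically dominated by the \(\eta=0\) law. I will use this domination for~\eqref{eq:green-bilinear-moment-with-eta}.

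Part (3) is a direct computation. For \(X\sim\text{Gamma}(1/2,1)\),
\[\mathbb E[(2X)^{-s}]=2^{-s}\,\frac{\Gamma(1/2-s)}{\Gamma(1/2)}=C_s,\]
which gives~\eqref{eq:green-diagonal-moment} (the case \(q=e_a\)).

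For~\eqref{eq:green-bilinear-moment}, split \(\psi=\psi^+-\psi^-\) and \(\phi=\phi^+-\phi^-\), and use subadditivity of \(x\mapsto x^s\). Normalize each part to a probability vector, and apply polarization together with positivity of \(G\) entrywise. Since \(G>0\) entrywise (\(H_\beta\) is an M-matrix), for probability vectors \(p,q\) we have \(\langle p,Gq\rangle\le \langle m,Gm\rangle\cdot 2\)-type bounds with \(m=(p+q)/2\):
\[\langle p,Gq\rangle\le 2\langle m,Gm\rangle.\]
This would cost a constant \(2^s\cdot 4\). To get exactly \(C_s\), I instead use Cauchy--Schwarz for the positive definite \(G\):
\[\langle p,Gq\rangle^s\le \langle p,Gp\rangle^{s/2}\langle q,Gq\rangle^{s/2},\]
followed by Hölder. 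Hölder, however, needs the moment of order \(s\) of each factor, since \(\mathbb E[X^{s/2}Y^{s/2}]\le \mathbb E[X^s]^{1/2}\mathbb E[Y^s]^{1/2}=C_s\). The sign splitting then loses a constant (up to \(4\)). I will accept that, unless the signed case should be handled via \(|\langle\psi,G\phi\rangle|\le\langle|\psi|,G|\phi|\rangle\), which holds by entrywise positivity. That inequality gives exactly \(C_s\|\psi\|_1^s\|\phi\|_1^s\).

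The \(\eta\)-version~\eqref{eq:green-bilinear-moment-with-eta} follows the same way, using the domination from part (2). The general-\(W\) case (nonzero diagonal) reduces to the zero-diagonal case by the deterministic shift \(\beta^\circ\) noted before Proposition~\ref{prop:potential-marginal-conditional-laws}.
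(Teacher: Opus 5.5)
Your argument is correct in substance. For (2), \eqref{eq:green-diagonal-moment} and \eqref{eq:green-bilinear-moment} it is the paper's argument: normalization of \(\nu_V^{W,\tau q}\) gives \(\mathbb E[e^{-\tau^2Q/2}]=e^{-\tau}\), and \(|\langle\psi,G\phi\rangle|\le\langle|\psi|,G|\phi|\rangle\) followed by Cauchy--Schwarz twice gives exactly \(C_s\). The sign-splitting, polarization and ``\(\lambda=\epsilon q/\ldots\)'' detours are not needed and should be removed.

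You depart from the paper in two places.

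For (1), the paper only quotes Sabot's Theorem~C. You instead derive it from Proposition~\ref{prop:potential-marginal-conditional-laws} and Corollary~\ref{cor:one-vertex-inverse-gaussian-laws}: with \(\eta=0\), \(1/G(\delta,\delta)=S_\delta\) has conditional density \(g_0\) given \(\beta_B\), which does not depend on \(\beta_B\), and \(e^{u_i}=(\widehat G_BW_{B\delta})_i\) is \(\beta_B\)-measurable. This is valid and self-contained, and it makes the independence transparent.

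For \eqref{eq:green-bilinear-moment-with-eta}, the paper appends a vertex carrying the weights \(\eta_z\). It then uses that \(H_\beta^{-1}\) is entrywise below the \(V\times V\) block of the enlarged Green matrix, which is a pathwise domination on a coupling. You instead tilt inside \(\nu_V^{W,\eta}\). This avoids the auxiliary graph, and your identity with the extra factor \(e^{-\tau\langle q,G\eta\rangle}\) is correct.

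The one step that does not follow as written is ``this shows that \(\langle q,Gq\rangle\) is stochastically dominated by the \(\eta=0\) law''. Since \(\langle q,G\eta\rangle\ge0\), the identity only gives \(\mathbb E_{\nu_V^{W,\eta}}[e^{-\lambda Q}]\ge e^{-\sqrt{2\lambda}}\) for all \(\lambda\ge0\). That is Laplace-transform order, which is strictly weaker than stochastic order: a variable equal to \(0\) or \(2\) with probability \(1/2\) each is below the constant \(1\) in Laplace order, but not stochastically.

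The gap closes because \(x\mapsto x^s\), \(0<s<1\), is a Bernstein function:
\[
x^s=\frac{s}{\Gamma(1-s)}\int_0^\infty\bigl(1-e^{-\lambda x}\bigr)\lambda^{-1-s}\,d\lambda .
\]
By Tonelli this gives \(\mathbb E_{\nu_V^{W,\eta}}[Q^s]\le\mathbb E_{\nu_V^W}[Q^s]=C_s\), and your Cauchy--Schwarz step then gives \eqref{eq:green-bilinear-moment-with-eta}. Alternatively, the paper's coupling gives genuine stochastic domination.

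Your reduction of nonzero diagonal \(W\) via the shift \(\beta^\circ\) is correct, and more explicit than the paper's one-line assertion.
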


\begin{proof}
The first assertion is \cite[Section~3.1, Theorem~C]{Sabot2019}.

For the second assertion, use \eqref{eq:joint-law-of-beta-finite-graph} with \(\eta=tq\). Since \(\nu_V^{W,tq}\) is a probability measure,
\[
1=e^t\,
\mathbb E_{\nu_V^W}\left[
\exp\!\left(-\frac{t^2}{2}\langle q,Gq\rangle\right)\right],
\qquad t\ge0.
\]
Thus, writing \(Q=\langle q,Gq\rangle\), \(\mathbb E_{\nu_V^W}[e^{-\lambda Q}] =e^{-\sqrt{2\lambda}},\  \lambda\ge0. \)
This is the Laplace transform of \(1/(2\gamma)\), where \(\gamma\sim\operatorname{Gamma}(1/2,1)\). Consequently,
\[
\mathbb E[Q^s]
=\frac{2^{-s}}{\sqrt{\pi}}
\int_0^\infty t^{-s-1/2}e^{-t}\,dt
=C_s.
\]
Taking \(q\) to be the unit vector at \(a\) gives \eqref{eq:green-diagonal-moment}.

Finally, \(G\) has nonnegative entries and is positive definite, so
\[
|\langle\psi,G\phi\rangle|
\le \langle|\psi|,G|\phi|\rangle
\le
\langle|\psi|,G|\psi|\rangle^{1/2}
\langle|\phi|,G|\phi|\rangle^{1/2}.
\]
Cauchy--Schwarz give \eqref{eq:green-bilinear-moment}.

To obtain \eqref{eq:green-bilinear-moment-with-eta} for \(\eta\ne0\), append a new vertex \(\delta\) to \(V \) with edge weights \(W_{z\delta}=\eta_z\), and sample the potential law \(\nu_{V\cup\{\delta\}}^{W} \) on the enlarged graph. Its marginal on \(V\) is \(\nu_V^{W,\eta}\) by \eqref{eq:beta-marginal-law}.  \(H_\beta^{-1}\) is bounded entrywise by the \(V\times V\) block of the enlarged Green matrix. Applying \eqref{eq:green-bilinear-moment} to \(p,q\), extended by zero at \(\delta\), gives \eqref{eq:green-bilinear-moment-with-eta}.
\end{proof}

\section{Effective conductance estimates}
We consider the \(H^{2|2}\) model on a finite connected graph \(\mathcal G=(V,E)\) with positive edge weights \(W=(W_{ij})\). Fix a pinning vertex \(\delta\in V\) and another vertex \(a\ne\delta\). In the application below, \(\mathcal G\) will be the wired annular block \(\widetilde{\mathcal A}_{2^\ell r_0}\) of Definition~\ref{def:wired-annular-block}, with \(a=a_\ell\) and \(\delta=a_{\ell+1}\). Define
\[
Z(W)=\mathbb E_{\mu_{\delta,V}^W}[e^{u_a/2}].
\]

Let \(\widehat V=V\setminus\{a,\delta\}\), and assume \(|\widehat V|\ge2\). Throughout this section, write \(\mathbb E_{\beta_{\widehat V}}\) for expectation over
\begin{equation}\label{eq:interior-beta-law}
\beta_{\widehat V}\sim\nu_{\widehat V}^{W_{\widehat V\widehat V},\,W_{\widehat Va}+W_{\widehat V\delta}},
\end{equation}
and denote
\begin{equation}\label{eq:interior-green-matrix}
\widehat G=\left(2\operatorname{diag}(\beta_{\widehat V})-W_{\widehat V\widehat V}\right)^{-1}.
\end{equation}

\begin{lemm}\label{lem:Z-bound-by-M}
Define
\begin{equation}\label{eq:K-and-M-definition}
K=K(W,\beta_{\widehat V})=W_{a\delta}+W_{a\widehat V}\widehat G W_{\widehat V\delta},
\qquad M(W)=\mathbb E_{\beta_{\widehat V}}[K].
\end{equation}
We have \(Z(W)\le 1 \) and
\begin{equation}\label{eq:M-bound-by-weights-at-a}
0\le M(W)\le W_{a\delta}+\sum_{i\in\widehat V}W_{ai},
\end{equation}
moreover,
\begin{equation}\label{eq:Z-bound-by-M}
Z(W)\le
\left(\frac34\right)^{1/4}
\frac{\Gamma(1/4)}{2^{1/4}\sqrt\pi}\,M(W)^{1/4}.
\end{equation}
\end{lemm}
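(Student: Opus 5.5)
The plan is to prove the three assertions in order. The last one is a conditional computation on the two boundary vertices, followed by Jensen's inequality.

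\emph{Step 1: \(Z(W)\le1\).} By Jensen's inequality, \(Z(W)=\mathbb E[e^{u_a/2}]\le \mathbb E_{\mu_{\delta,V}^W}[e^{u_a}]^{1/2}\). The rerooting Lemma~\ref{lem:rerooting} gives \(\mathbb E_{\mu_{\delta,V}^W}[e^{u_a}]=1\), so \(Z(W)\le1\).

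\emph{Step 2: the bound \eqref{eq:M-bound-by-weights-at-a}.} Positivity of \(M\) is clear, since \(\widehat G\) has nonnegative entries. Set \(\eta=W_{\widehat Va}+W_{\widehat V\delta}\), which is the pinning vector in \eqref{eq:interior-beta-law}. Then \((\widehat G W_{\widehat V\delta})_i\le(\widehat G\eta)_i\) entrywise. By Corollary~\ref{cor:one-vertex-inverse-gaussian-laws}, conditionally on \(\beta_{\widehat V\setminus\{i\}}\), the variable \((\widehat G\eta)_i\) is either \(\operatorname{IG}(1,\check\eta_i)\) or identically zero, so its mean is at most \(1\). Hence
\[
\mathbb E_{\beta_{\widehat V}}\bigl[(\widehat G W_{\widehat V\delta})_i\bigr]\le1 \quad\text{for every } i\in\widehat V,
\]
and summing against \(W_{ai}\) gives \(M(W)\le W_{a\delta}+\sum_{i\in\widehat V}W_{ai}\).

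\emph{Step 3: the bound \eqref{eq:Z-bound-by-M}.} Realize \(u\) through \(\beta\sim\nu_V^W\) and take \(U=\{a,\delta\}\) in Proposition~\ref{prop:potential-marginal-conditional-laws}. The marginal of \(\beta_{\widehat V}\) is exactly \eqref{eq:interior-beta-law}. Given \(\beta_{\widehat V}\), the pair \((\beta_a,\beta_\delta)\) has law \(\nu_{\{a,\delta\}}^{\check W,0}\), and the off-diagonal entry of \(\check W\) is \(\check W_{a\delta}=K\). Inverting the \(2\times2\) Schur complement \(\check H_U\) gives
\[
e^{u_a}=\frac{G(\delta,a)}{G(\delta,\delta)}=\frac{K}{X},\qquad X=2\beta_a-\check W_{aa}.
\]
Corollary~\ref{cor:one-vertex-inverse-gaussian-laws}, applied on the two-vertex graph with \(\eta=0\) and \(r_a=K\), says that \(X\) has density \(g_K\) conditionally on \(\beta_{\widehat V}\). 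Therefore
\[
\mathbb E\bigl[e^{u_a/2}\mid\beta_{\widehat V}\bigr]=\phi(K),\qquad \phi(k):=k^{1/2}\int_0^\infty x^{-1/2}g_k(x)\,dx .
\]
The key scalar claim is
\[
\phi(k)\le c\,k^{1/4}\quad\text{for all }k>0,\qquad c:=\Bigl(\tfrac34\Bigr)^{1/4}\frac{\Gamma(1/4)}{2^{1/4}\sqrt\pi}.
\]
Granting this, Jensen's inequality for the concave map \(k\mapsto k^{1/4}\) gives \(Z(W)=\mathbb E_{\beta_{\widehat V}}[\phi(K)]\le c\,M(W)^{1/4}\).

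\emph{Step 4: the scalar inequality.} This is where I expect the main difficulty, mainly in getting the exact constant. Explicitly, \(\phi(k)=(2/\pi)^{1/2}k^{1/2}e^{k}K_0(k)\), where \(K_0\) is the modified Bessel function. For small \(k\) this is of order \(k^{1/2}\log(1/k)\), and for large \(k\) it tends to a constant, so the ratio \(\phi(k)/k^{1/4}\) is bounded. To get the stated constant I would not work with the Bessel function directly. Instead I would write
\[
\phi(k)=k^{1/4}\,\mathbb E\bigl[(k/X^2)^{1/4}\bigr]
\]
and compare \(k/X^2\) with a quantity of the type \(\langle q,Gq\rangle\), whose \(s\)-moments equal \(C_s\) by Lemma~\ref{lem:green-fractional-moments}. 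At \(s=1/4\) we have \(C_{1/4}=\Gamma(1/4)/(2^{1/4}\sqrt\pi)\), which suggests this route. Concretely, I would apply H\"older's inequality or use the elementary bound
\[
\frac{(x-k)^2}{2x}\ge\frac{x}{2}-k+\frac{k^2}{2x},
\]
and then optimize a free parameter, which should produce the factor \((3/4)^{1/4}\). If this constant chase fails, any absolute constant would still suffice for the later applications.
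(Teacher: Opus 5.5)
\textbf{Steps 1--3 are sound.} Step 1 is the paper's argument. In Step 2 you bound $\mathbb E_{\beta_{\widehat V}}[(\widehat G W_{\widehat V\delta})_i]$ using the conditional $\operatorname{IG}(1,\check\eta_i)$ law from Corollary~\ref{cor:one-vertex-inverse-gaussian-laws}. The paper instead identifies $a,\delta$ into $\rho$, recognises $(\widehat G\eta)_i=e^{\mathring u_i}$, and invokes Lemma~\ref{lem:rerooting}. Both routes amount to $\mathbb E[(\widehat G\eta)_i]=1$, so yours is fine. Step 3 is the paper's conditioning argument: the Schur complement at $\{a,\delta\}$, $\check W_{a\delta}=K$, and $e^{u_a}=K/X$ with $X$ of conditional density $g_K$.

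\textbf{The gap is Step 4.} This is the only place the constant in \eqref{eq:Z-bound-by-M} comes from, and you do not prove it. The routes you sketch do not work as stated. The comparison of $k/X^2$ with $\langle q,Gq\rangle$ is suggested only by the appearance of $\Gamma(1/4)$. Your displayed ``elementary bound'' $\frac{(x-k)^2}{2x}\ge\frac x2-k+\frac{k^2}{2x}$ is in fact an identity, so it leaves no free parameter to optimise.

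\textbf{The missing step is the substitution $x=ke^{t}$.} It gives $\frac{(x-k)^2}{2x}=k(\cosh t-1)$ and $x^{-1/2}g_k(x)\,dx=(2\pi)^{-1/2}e^{-k(\cosh t-1)}\,dt$, hence
\[
\phi(k)=\sqrt{\frac{k}{2\pi}}\int_{\mathbb R}e^{-k(\cosh t-1)}\,dt .
\]
This is exactly the paper's expression, namely the two-vertex case of \eqref{eq:joint-law-of-u-pinned-at-delta} with weight $K$. Now use $\cosh t-1\ge t^4/24$ together with
\[
\int_{\mathbb R}e^{-kt^4/24}\,dt=\tfrac12\Gamma(1/4)(24/k)^{1/4}.
\]
This yields $\phi(k)\le(3/4)^{1/4}\Gamma(1/4)2^{-1/4}\pi^{-1/2}k^{1/4}$ with the stated constant, and Jensen finishes as you describe.

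Your fallback via Bessel asymptotics does correctly give $\sup_{k>0}\phi(k)k^{-1/4}<\infty$, because the ratio is continuous and tends to $0$ at both ends. Some finite constant would suffice for Sections~5 and~6. However, it does not establish the lemma with the explicit constant claimed in \eqref{eq:Z-bound-by-M}.
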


\begin{proof}
Identify \(a,\delta\) to a vertex \(\rho\), on the graph with vertex set \(\mathring V=\widehat V\cup\{\rho\}\), define edge weights
\begin{equation}\label{eq:weights-after-identifying-a-and-delta}
\mathring W_{ij}=W_{ij}\quad(i,j\in\widehat V),\qquad
\mathring W_{i\rho}=W_{ia}+W_{i\delta}\quad(i\in\widehat V),
\end{equation}
where the self-loop from identifying \(a,\delta\) is discarded. By Proposition~\ref{prop:potential-marginal-conditional-laws}, \eqref{eq:interior-beta-law} is the marginal law of both \(\nu_V^W\) and \(\nu_{\mathring V}^{\mathring W}\). The field \(\mathring u\) on \(\mathring V\) pinned at \(\rho\) satisfies
\begin{equation}\label{eq:contracted-field-from-green-function}
(e^{\mathring u_i})_{i\in\widehat V}
=\widehat G(W_{\widehat Va}+W_{\widehat V\delta}).
\end{equation}
Since \(\widehat G\) has nonnegative entries,
\[
0\le K\le W_{a\delta}+W_{a\widehat V}\widehat G(W_{\widehat Va}+W_{\widehat V\delta})
=W_{a\delta}+\sum_{i\in\widehat V}W_{ai}e^{\mathring u_i}.
\]
Taking expectations and using Lemma~\ref{lem:rerooting}, \(\mathbb E_{\beta_{\widehat{V}}}[e^{\mathring u_i}] = \mathbb{E}_{\nu_{\mathring V}^{\mathring W}}(e^{\mathring u_{i}})=1\) gives \eqref{eq:M-bound-by-weights-at-a}.

Under \(\nu_V^W\), condition on \(\beta_{\widehat V}\). Let \(\check W_{kl}=W_{kl}+W_{k\widehat V}\widehat G W_{\widehat Vl}\) for \(k,l\in\{a,\delta\}\). We have
\begin{equation}\label{eq:schur-complement-at-a-and-delta}
\check H_{\{a,\delta\}}
=\begin{pmatrix}
2\beta_a-\check W_{aa}&-K\\
-K&2\beta_\delta-\check W_{\delta\delta}
\end{pmatrix}.
\end{equation}
By Proposition~\ref{prop:potential-marginal-conditional-laws}, \(\mathcal L\!\left( \left(\beta_a-\tfrac12\check W_{aa}, \beta_\delta-\tfrac12\check W_{\delta\delta}\right) \,\middle|\,\beta_{\widehat V} \right) =\nu_{\{a,\delta\}}^{K}\), and by Schur complement,
\begin{equation}\label{eq:exp-u-a-from-schur-complement}
e^{u_a}=\frac{(H_\beta^{-1})_{a\delta}}{(H_\beta^{-1})_{\delta\delta}}
=\frac{K}{2\beta_a-\check W_{aa}}.
\end{equation}
By \eqref{eq:joint-law-of-u-pinned-at-delta},
\begin{equation}\label{eq:conditional-half-moment-bound-by-K}
\begin{aligned}
\mathbb E[e^{u_a/2}\mid\beta_{\widehat V}]
&=\sqrt{\frac K{2\pi}}\int_{\mathbb R}e^{-K(\cosh t-1)}\,dt\\
&\le\sqrt{\frac K{2\pi}}\int_{\mathbb R}e^{-Kt^4/24}\,dt
=\left(\frac34\right)^{1/4}
\frac{\Gamma(1/4)}{2^{1/4}\sqrt\pi}\,K^{1/4},
\end{aligned}
\end{equation}
Taking expectations and applying Jensen's inequality gives \eqref{eq:Z-bound-by-M}. By Cauchy--Schwarz and Lemma~\ref{lem:rerooting},
\begin{equation}\label{eq:Z-at-most-one}
Z(W)=\mathbb E_{\mu_{\delta,V}^W}[e^{u_a/2}]
\le \bigl(\mathbb E_{\mu_{\delta,V}^W}[e^{u_a}]\bigr)^{1/2}
=1.
\end{equation}
\end{proof}

To differentiate \(M(W)\) with respect to an edge weight \(W_{ij} \), we condition on the potentials \((\beta_{\ell})_{\ell\ne i,j} \) and take a Schur complement. The resulting conditional calculation involves the two-vertex law in the following lemma.
\begin{lemm}\label{lem:two-point-conductance-derivative}
Let \(c>0\), let \(p,q\in \mathbb{R}_{+}^2\), and \(\eta=p+q\), with \(\eta_1+\eta_2>0\). Set
\[
  \beta\sim\nu_{\{1,2\}}^{c,\eta},
\qquad
H=\begin{pmatrix}2\beta_1&-c\\-c&2\beta_2\end{pmatrix}.
\]
Define
\begin{equation}\label{eq:two-point-U-h-F-definition}
U=H^{-1}\eta,\qquad
h_k=\frac{(H^{-1}p)_k}{U_k}\quad(k=1,2),\qquad
F(c)=\mathbb{E}_{\nu_{\{1,2\}}^{c,\eta}}[p^{\mathsf T}H^{-1}q].
\end{equation}
Then \(U_1,U_2>0\), \(\mathbb{E}_{\nu_{\{1,2\}}^{c,\eta}}[U_1]=\mathbb{E}_{\nu_{\{1,2\}}^{c,\eta}}[U_2]=1\), and \(c\mapsto F(c)\) is finite and continuously differentiable on \((0,\infty)\), with \(p,q\) held fixed. Moreover,
\begin{equation}\label{eq:two-point-moment-bound-by-F-prime}
0\le F'(c)\le1,\qquad
\mathbb{E}_{\nu_{\{1,2\}}^{c,\eta}}\!\left[(U_1U_2)^2(h_1-h_2)^2\right]
\le\left(1+\frac1c\right)F'(c),
\end{equation}
and
\begin{equation}\label{eq:two-point-moment-comparison-7over4-2}
\mathbb{E}_{\nu_{\{1,2\}}^{c,\eta}}\!\left[(U_1U_2)^{7/4}(h_1-h_2)^2\right]
\le
\mathbb{E}_{\nu_{\{1,2\}}^{c,\eta}}\!\left[(U_1U_2)^2(h_1-h_2)^2\right].
\end{equation}
\end{lemm}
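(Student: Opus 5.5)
The plan is to realize the two-point law as the marginal of a three-vertex model. Append a vertex \(\delta\) to \(\{1,2\}\) with weights \(W_{k\delta}=\eta_k\), exactly as in the proof of \eqref{eq:green-bilinear-moment-with-eta}. By \eqref{eq:beta-marginal-law}, \(\nu_{\{1,2\}}^{c,\eta}\) is the marginal of \(\nu_{\{1,2,\delta\}}^{W}\) on \(\{1,2\}\). By the Schur-complement identity \eqref{eq:wired-field-from-green-function}, \(U_k=(H^{-1}\eta)_k=e^{u_k}\), where \(u\) is the effective field on \(\{1,2,\delta\}\) pinned at \(\delta\), with law \eqref{eq:joint-law-of-u-pinned-at-delta}. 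Positivity of \(U_k\) follows from \(c>0\), since then \(H^{-1}\) has strictly positive entries and \(\eta\neq0\). The identity \(\mathbb E[U_k]=1\) is Lemma~\ref{lem:rerooting}. Finiteness of \(F\) follows from \(0\le p^{\mathsf T}H^{-1}q\le \eta^{\mathsf T}H^{-1}\eta=\eta^{\mathsf T}U\), whose mean is \(\eta_1+\eta_2\). I would then work with the three-vertex \(u\)-density, where \(c\) enters only through \(e^{-c(\cosh(u_1-u_2)-1)}\) and the spanning-tree factor \(\sqrt{D}\). Since this density is smooth in \(c\) with integrable \(c\)-derivatives, locally uniformly, differentiation under the integral gives \(F\in C^1\).

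Next I would rewrite the integrand as an effective conductance. Split \(\delta\) into two terminals \(\delta_p,\delta_q\) with weights \(p_k,q_k\). Put conductances \(C_{k\delta_p}=p_kU_k\), \(C_{k\delta_q}=q_kU_k\) and \(C_{12}=cU_1U_2\). Then \(h_k=(H^{-1}p)_k/U_k\) is the harmonic voltage equal to \(1\) at \(\delta_p\) and \(0\) at \(\delta_q\), because \(H(hU)=p\) translates into harmonicity for the \(C\)-network. Moreover \(K:=p^{\mathsf T}H^{-1}q\) is the effective conductance
\[
K=\sum_k p_kU_k(1-h_k)^2+\sum_k q_kU_kh_k^2+cU_1U_2(h_1-h_2)^2 .
\]
I would compute \(F'(c)\) in two ways. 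First, differentiate the \(\beta\)-density \eqref{eq:joint-law-of-beta-finite-graph} directly. This gives the logarithmic derivative \(1-U_1U_2+G_{12}\), and the integrand derivative \((H^{-1}p)_1(H^{-1}q)_2+(H^{-1}p)_2(H^{-1}q)_1\). Second, use Rayleigh monotonicity at fixed \(U\), which gives the term \(U_1U_2(h_1-h_2)^2\). The \(U\)-dependent remainder should be handled by an integration by parts in \(u_1-u_2\) (a Ward identity, i.e.\ the \(c\)-derivative of the normalization being zero). The target is a representation of the form \(F'(c)=\mathbb E\bigl[U_1U_2(h_1-h_2)^2\,T\bigr]\), where, conditionally on the relevant Schur variable, \(T\) is expressed via an \(\operatorname{IG}(1,\cdot)\) variable as in \eqref{eq:one-vertex-conditional-IG}.

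From such a representation the bound \(F'\ge0\) is immediate. The upper bound \(F'\le1\) should follow from \(0\le h_k\le1\) together with \(\mathbb E[U_k]=1\). The bound by \((1+1/c)F'(c)\) would come from Cauchy--Schwarz combined with the second moment \(1+1/s\) in \eqref{eq:inverse-gaussian-first-two-moments}, the shape parameter being at least \(c\). Finally, for \eqref{eq:two-point-moment-comparison-7over4-2}, set \(X=U_1U_2\) and \(Y=(h_1-h_2)^2\). I would use \(X^{7/4}\le\frac12(X^{3/2}+X^2)\) and then prove \(\mathbb E[X^{3/2}Y]\le\mathbb E[X^2Y]\). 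The idea is to compare the weighted measures via Lemma~\ref{lem:rerooting}: rerooting at vertex \(1\) or \(2\) converts a factor \(U_k\) into a change of pinning, and exploits the symmetry of the cosh-term under \(u\mapsto-u\) relative to the new root. Failing that, I would try Hölder against the weight \(XY\), together with \(\mathbb E[XY]\le\mathbb E[X^2Y]\). The main obstacle is the exact Ward-type identity for \(F'(c)\): getting the boundary terms of the integration by parts to combine into a manifestly nonnegative expression in \((h_1-h_2)^2\).
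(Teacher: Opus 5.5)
Your preliminary steps are fine: the three-vertex extension, $U_k=e^{u_k}$, positivity, $\mathbb E[U_k]=1$ by rerooting, finiteness of $F$, and the network reading of $h$ and $K=p^{\mathsf T}H^{-1}q$. The substantive claims, however, are not proved.

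\textbf{The derivative bounds.} The sign of $F'$, the bound $F'\le 1$, and the bound by $(1+1/c)F'(c)$ all rest on a representation $F'(c)=\mathbb E[U_1U_2(h_1-h_2)^2T]$ that you only posit. Differentiating the $\beta$-density leaves the covariance term $\mathbb E[K(1-U_1U_2+G_{12})]$, which has no sign. The Ward identity meant to absorb it is never produced, as you acknowledge yourself.

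The paper avoids this covariance with an exact algebraic identity. With $\Delta=p_1q_2-p_2q_1$ and $\eta_1,\eta_2>0$,
\[
p^{\mathsf T}H^{-1}q=\frac{p_1q_1}{\eta_1}U_1+\frac{p_2q_2}{\eta_2}U_2+\frac{c\,\Delta^2}{\eta_1\eta_2\det H}.
\]
Since $\mathbb E[U_k]=1$ for every $c$, all the $c$-dependence of $F$ sits in $\mathbb E[c/\det H]$.

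Next, write $\det H=2\beta_2\cdot(2\beta_1-c^2/(2\beta_2))$. This is a $g_{\eta_2+c}$ variable times a variable that, conditionally, has density $g_{\eta_1+c\eta_2/(2\beta_2)}$, so the law of $\det H$ is symmetric in $(c,\eta_2)$. After swapping, $\det H\overset{d}{=}Q_c(T)/(TZ)$, where $Q_c(t)=\eta_1\eta_2t+c(\eta_1+\eta_2)$, $T\sim\operatorname{IG}(1,\eta_1+\eta_2)$ has a law free of $c$, and $Z\mid T\sim\operatorname{IG}(1,Q_c(T)/(\eta_1+\eta_2))$.

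This gives:
\begin{itemize}
\item $F'(c)=\Delta^2\,\mathbb E[T^2/Q_c(T)^2]$, which also yields the $C^1$ property that your plan only asserts;
\item via $U_1U_2(h_1-h_2)=\Delta/\det H$, the identity $\mathbb E[(U_1U_2)^2(h_1-h_2)^2]=\Delta^2\,\mathbb E\bigl[T^2Q_c(T)^{-2}\bigl(1+(\eta_1+\eta_2)/Q_c(T)\bigr)\bigr]$.
\end{itemize}
Your instinct that an IG second moment with shape at least $c$ produces the factor $1+1/c$ is right. The mechanism, though, is this comparison of the conditional first and second moments of $Z$ given $T$, which you have not set up. It is also unclear how a Cauchy--Schwarz step would give the required \emph{lower} bound on $F'$.

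Separately, $F'\le 1$ cannot follow from $0\le h_k\le 1$ and $\mathbb E[U_k]=1$ alone, since these give no control over mixed moments involving $U_1U_2$. In the paper it comes from the pointwise bounds $|\Delta|\le\eta_1\eta_2$ and $Q_c(t)\ge\eta_1\eta_2 t$.

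\textbf{The $7/4$ comparison.} For \eqref{eq:two-point-moment-comparison-7over4-2}, your reduction via $X^{7/4}\le\frac12(X^{3/2}+X^2)$ and Cauchy--Schwarz to $\mathbb E[XY]\le\mathbb E[X^2Y]$ is valid. That last inequality, however, is left unproved, and rerooting does not obviously give it.

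The missing observation is that $U_1(h_1-h_2)=\Delta/(2\beta_1\eta_2+c\eta_1)$ is $\sigma(\beta_1)$-measurable, while $U_2\mid\beta_1\sim\operatorname{IG}(1,\eta_2+c\eta_1/(2\beta_1))$. Hence $\mathbb E[U_1^2Y]=\mathbb E[U_1^2U_2Y]\le\mathbb E[U_1^2U_2^2Y]$, and the same holds with the indices exchanged. With this fact your route closes, since $\mathbb E[XY]\le(\mathbb E[U_1^2Y]\,\mathbb E[U_2^2Y])^{1/2}\le\mathbb E[X^2Y]$. The paper uses the same fact, combined directly with $(U_1U_2)^{7/4}\le\frac14U_1^2U_2+\frac14U_1U_2^2+\frac12U_1^2U_2^2$.
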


\begin{proof}
Since \(H>0\) and \(c>0\), every entry of \(H^{-1}\) is positive; hence \(U_1,U_2>0\). First assume \(\eta_1,\eta_2>0\). Write \(\Delta=p_1q_2-p_2q_1\). We have
\begin{equation}\label{eq:two-point-U-and-voltage-formulas}
U_1=\frac{2\beta_2\eta_1+c\eta_2}{\det H},\qquad
U_2=\frac{2\beta_1\eta_2+c\eta_1}{\det H},\qquad
U_1U_2(h_1-h_2)=\frac{\Delta}{\det H}.
\end{equation}

Applying \eqref{eq:one-vertex-marginal-IG} at vertex \(2\) gives
\[\frac{\eta_2+c}{2\beta_2}\sim\operatorname{IG}(1,\eta_2+c).\]
Conditionally on \(\beta_2\),
\[S_1=2\beta_1-\frac{c^2}{2\beta_2}, \qquad \check\eta_1=\eta_1+\frac{c\eta_2}{2\beta_2}.\]
By \eqref{eq:one-vertex-green-pinning-identity}, \(U_1=\check\eta_1/S_1\). Thus \eqref{eq:one-vertex-conditional-IG}, followed by exchanging the indices, gives
\begin{equation}\label{eq:two-point-inverse-gaussian-laws}
\begin{aligned}
&\mathcal L\!\left(\frac{\eta_2+c}{2\beta_2}\right)
=\operatorname{IG}(1,\eta_2+c),\\
&\mathcal L(U_1\mid\beta_2)
=\operatorname{IG}\!\left(1,\eta_1+\frac{c\eta_2}{2\beta_2}\right),\ \mathcal L(U_2\mid\beta_1)
=\operatorname{IG}\!\left(1,\eta_2+\frac{c\eta_1}{2\beta_1}\right).
\end{aligned}
\end{equation}
Using \eqref{eq:inverse-gaussian-first-two-moments} and taking expectations, we obtain
\[\mathbb E_{\nu_{\{1,2\}}^{c,\eta}}[U_1] =\mathbb E_{\nu_{\{1,2\}}^{c,\eta}}[U_2]=1.\]

By Corollary~\ref{cor:one-vertex-inverse-gaussian-laws}, with \(g_s\) defined in \eqref{eq:reciprocal-inverse-gaussian-density}, if
\[X=2\beta_2,\qquad Y=2\beta_1-\frac{c^2}{X}, \]
then \(X\) has density \(g_{\eta_2+c}\), and, \(Y\) has density \(g_{\eta_1+c\eta_2/X}\) given \(X\). Both densities are invariant under exchanging \(c\) and \(\eta_2\). Since \(\det H=XY\), its law also has this symmetry.

Thus, for \((\widetilde\beta_1,\widetilde\beta_2) \sim\nu_{\{1,2\}}^{\eta_2,(\eta_1,c)}\),
\[\mathbb{E}_{\nu_{\{1,2\}}^{c,\eta}}[(\det H)^k] = \mathbb{E}_{\nu_{\{1,2\}}^{\eta_2,(\eta_1,c)}} [(4\widetilde\beta_1\widetilde\beta_2-\eta_2^2)^k], \qquad k=-1,-2.\]
Define
\[
T=\frac{\eta_1+\eta_2}{2\widetilde\beta_1},\qquad
Z=\frac{2c\widetilde\beta_1+\eta_1\eta_2}
{4\widetilde\beta_1\widetilde\beta_2-\eta_2^2}.
\]

By \eqref{eq:two-point-inverse-gaussian-laws}, denote \(Q_c(t)=\eta_1\eta_2t+c(\eta_1+\eta_2)\), then \(4\widetilde\beta_1\widetilde\beta_2-\eta_2^2 =\frac{Q_{c}(T)}{TZ} \).
\[
T\sim\operatorname{IG}(1,\eta_1+\eta_2),\qquad
\mathcal L(Z\mid T)=\operatorname{IG}\!\left(1,\frac{Q_{c}(T)}{\eta_1+\eta_2}\right),
\]
we can integrate out \(Z \) using its conditional moments: \(\mathbb{E}(Z|T)=1 \) and \(\mathbb{E}(Z^2|T)= 1+ \frac{\eta_1+\eta_2}{Q_{c}(T)} \). \eqref{eq:inverse-gaussian-first-two-moments} yields
\begin{equation}\label{eq:two-point-inverse-determinant-moments}
\begin{aligned}
\mathbb{E}_{\nu_{\{1,2\}}^{c,\eta}}[(\det H)^{-1}]
&=\mathbb{E}\left( \frac{T}{Q_c(T)} \right)\\
\mathbb{E}_{\nu_{\{1,2\}}^{c,\eta}}[(\det H)^{-2}]
&=\mathbb{E}\left(\frac{T^2}{Q_c(T)^2}\left(1+\frac{\eta_1+\eta_2}{Q_c(T)}\right)\right).
\end{aligned}
\end{equation}

Since \(H^{-1}\) has nonnegative entries and \(q\le\eta\), we have \(0\le p^{\mathsf T}H^{-1}q\le p^{\mathsf T}U\), so \(F(c)\le p_1+p_2\). Using \(\eta=p+q\), we have
\begin{equation}\label{eq:two-point-conductance-decomposition}
p^{\mathsf T}H^{-1}q
=\frac{p_1q_1}{\eta_1}U_1
+\frac{p_2q_2}{\eta_2}U_2
+\frac{c\Delta^2}{\eta_1\eta_2\det H}.
\end{equation}
Taking expectations and inserting \eqref{eq:two-point-inverse-determinant-moments}, we obtain
\[
F(c)=\frac{p_1q_1}{\eta_1}+\frac{p_2q_2}{\eta_2}
+\frac{\Delta^2}{\eta_1\eta_2} \mathbb{E}\left( 
\frac{cT}{Q_c(T)} \right).
\]
Moreover,
\[
\frac{\partial}{\partial c}\frac{ct}{Q_c(t)}
=\frac{\eta_1\eta_2t^2}{Q_c(t)^2}.
\]
Differentiation under the integral sign gives
\begin{equation}\label{eq:F-prime-integral}
F'(c)=\Delta^2 \mathbb{E}\left( \frac{T^2}{Q_c(T)^2} \right).
\end{equation}
Since \(|\Delta|\le\eta_1\eta_2\) and \(Q_{c}(t) \ge \eta_{1}\eta_{2}t \), thus \(0\le F'(c)\le1\). Since \(1+(\eta_1+\eta_2)/Q_c(t)\le1+1/c\), combining this bound with \eqref{eq:two-point-U-and-voltage-formulas} and \eqref{eq:F-prime-integral} gives \eqref{eq:two-point-moment-bound-by-F-prime}.

By \eqref{eq:two-point-U-and-voltage-formulas}, \(U_1^2(h_1-h_2)^2 =\frac{\Delta^2}{(2\beta_1\eta_2+c\eta_1)^2}\) is \(\sigma(\beta_{1}) \)-measurable, and \eqref{eq:two-point-inverse-gaussian-laws} gives \(\mathbb E[U_2\mid\beta_1]=1 \le\mathbb E[U_2^2\mid\beta_1]\), conditioning on \(\beta_1\) yields
\[\mathbb{E}_{\nu_{\{1,2\}}^{c,\eta}}[U_1^2U_2(h_1-h_2)^2] \le \mathbb{E}_{\nu_{\{1,2\}}^{c,\eta}}[U_1^2U_2^2(h_1-h_2)^2].\]
We can exchange the indices in the above bound, using
\[
(U_1U_2)^{7/4}
\le\tfrac14U_1^2U_2+\tfrac14U_1U_2^2+\tfrac12U_1^2U_2^2.
\]
we deduce \eqref{eq:two-point-moment-comparison-7over4-2}.

Finally, if \(\eta_1=0<\eta_2\), the conditional laws \eqref{eq:two-point-inverse-gaussian-laws} remain valid and give \(\mathbb{E}_{\nu_{\{1,2\}}^{c,\eta}}[U_k]=1\) for \(k=1,2\). Since \(p_1=q_1=0\), we have \(h_1=h_2=p_2/\eta_2\) and \(F(c)=p_2q_2/\eta_2\). Thus \(F'(c)=0\), and both expectations in \eqref{eq:two-point-moment-comparison-7over4-2} vanish. The case \(\eta_2=0<\eta_1\) follows by symmetry.
\end{proof}

With \(\beta_{\widehat V}\) and \(\widehat G\) as in \eqref{eq:interior-beta-law}--\eqref{eq:interior-green-matrix}, let
\begin{equation}\label{eq:U-from-green-function}
U_i=\bigl[\widehat G(W_{\widehat Va}+W_{\widehat V\delta})\bigr]_i,
\quad i\in\widehat V,\qquad U_a=U_\delta=1,
\end{equation}
and define random conductances on \((V,E)\) by
\begin{equation}\label{eq:C-from-W-and-U}
C_{ij}=W_{ij}U_iU_j,\qquad \{i,j\}\in E.
\end{equation}
The variables \(U_i\) are positive, and \(U_i=e^{\mathring u_i}\) on \(\widehat V\) by \eqref{eq:contracted-field-from-green-function}.

\begin{lemm}\label{lem:M-differential-inequality}
Let \(W_e=wa_e\), where \(a_e\ge1\) are fixed and \(w>0\), and denote \(M(w)=M((wa_e)_{e\in E})\). For \(f:V\to\mathbb R\) with \(f_a=1\) and \(f_\delta=0\), set
\begin{equation}\label{eq:D-f-definition}
D_f(w)=\mathbb E_{\beta_{\widehat V}}\left[
\sum_{\{i,j\}\in E}W_{ij}(U_iU_j)^{1/4}(f_i-f_j)^2\right].
\end{equation}
The function \(M\) is continuously differentiable and nondecreasing on \((0,\infty)\), and
\begin{equation}\label{eq:M-differential-inequality}
M(w)^2\le(w+1)M'(w)D_f(w).
\end{equation}
\end{lemm}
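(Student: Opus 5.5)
Here is the plan. Compute $M'(w)=\sum_{e}a_e\,\partial_{W_e}M$ edge by edge, using Lemma~\ref{lem:two-point-conductance-derivative} as the local computation, and then apply a Cauchy--Schwarz inequality with the cutoff $f$.

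Fix an edge $\{i,j\}\in E$. First suppose $i,j\in\widehat V$, and condition on $\beta_\ell$ for $\ell\in\widehat V\setminus\{i,j\}$. By Proposition~\ref{prop:potential-marginal-conditional-laws}, the conditional law of $(\beta_i,\beta_j)$ is a two-vertex law $\nu^{\check W,\check\eta}_{\{i,j\}}$. After the diagonal shift, its off-diagonal coupling is $c=W_{ij}+(\text{Schur term})$, which depends on $W_{ij}$ only through the additive $W_{ij}$. Its pinning $\check\eta$ splits as $p+q$, where $p$ comes from $W_{\widehat V a}$ and $q$ from $W_{\widehat V\delta}$ (both propagated through the Green function of the complement).

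In these variables the conductance is $K=W_{a\delta}+(\text{term independent of }\beta_i,\beta_j)+p^{\mathsf T}H^{-1}q$. Hence
\[
\partial_{W_{ij}}M=\mathbb E\bigl[F'(c)\bigr]\ge0 ,
\]
with $F$ as in Lemma~\ref{lem:two-point-conductance-derivative}. Continuity of $F'$ and the bound $0\le F'\le 1$ justify differentiating under the expectation, and they give $C^1$ regularity and monotonicity of $M$.

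In the same variables, $U_i,U_j$ are the two-point $U_1,U_2$. The ratio $h_k$ is exactly the electrical potential at $k$ of the unit-voltage problem for $K$ (voltage $1$ at $a$, $0$ at $\delta$), computed in the network $C$. Lemma~\ref{lem:two-point-conductance-derivative} then gives
\[
\mathbb E\bigl[(U_iU_j)^{7/4}(h_i-h_j)^2\bigr]\le\Bigl(1+\tfrac1c\Bigr)\partial_{W_{ij}}M\le\Bigl(1+\tfrac1{W_{ij}}\Bigr)\partial_{W_{ij}}M .
\]
Edges touching $a$ or $\delta$ are handled the same way, with $U_a=U_\delta=1$; I expect this to be the main technical nuisance, since one may need to condition on all of $\widehat V$ minus one vertex, or treat $W_{a\delta}$ directly, where $\partial M=1$. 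Multiplying by $W_{ij}=wa_{ij}$ and using $a_{ij}\ge1$, so that $W_{ij}(1+1/W_{ij})\le a_{ij}(w+1)$, summing over edges yields
\[
\mathbb E\Bigl[\sum_{ij}W_{ij}(U_iU_j)^{7/4}(h_i-h_j)^2\Bigr]\le (w+1)M'(w).
\]

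For the variational step, the effective conductance in the network $C$ satisfies Thomson/Dirichlet duality. In particular, since $h$ has boundary values $1,0$ and the current flows along the edges,
\[
K=\sum_{ij}C_{ij}(h_i-h_j)(f_i-f_j)
\]
for any $f$ with $f_a=1,f_\delta=0$. Write $C_{ij}=W_{ij}U_iU_j$ as $W_{ij}(U_iU_j)^{7/8}(h_i-h_j)\cdot (U_iU_j)^{1/8}(f_i-f_j)$, apply Cauchy--Schwarz over edges and over $\mathbb E_{\beta_{\widehat V}}$ jointly, and use $M=\mathbb E[K]$:
\[
M^2\le \mathbb E\Bigl[\sum W_{ij}(U_iU_j)^{7/4}(h_i-h_j)^2\Bigr]\;\mathbb E\Bigl[\sum W_{ij}(U_iU_j)^{1/4}(f_i-f_j)^2\Bigr]\le (w+1)M'(w)D_f(w).
\]

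The main obstacle is the identification in the first step. One must check that $K$, after conditioning, is exactly $p^{\mathsf T}H^{-1}q$ plus a term independent of $(\beta_i,\beta_j)$, with $c$ shifted by $W_{ij}$. One must also check that the two-point $h_k$ coincides with the global harmonic function, i.e.\ the voltage $(\widehat G W_{\widehat Va})_k/U_k$. Both follow from Schur complement formulas, but they need care with the boundary edges.
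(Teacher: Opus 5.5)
Your proposal follows the paper's proof essentially step by step. You condition on $\beta_{\widehat V\setminus\{i,j\}}$, whose marginal law does not involve $W_{ij}$, to reduce to Lemma~\ref{lem:two-point-conductance-derivative}; you combine the $7/4$-versus-$2$ comparison with $W_e+1\le(w+1)a_e$; and you apply Cauchy--Schwarz to $K=\sum_e C_e\nabla_e h\nabla_e f$ with the $7/8$--$1/8$ split. For the boundary edges the paper does what you anticipate: it conditions on $\widehat V\setminus\{i\}$ and uses the one-vertex law $U_i\sim\operatorname{IG}(1,s)$, so the two-point lemma is not applied verbatim there, and it gets the $C^1$ property from Scheff\'e's lemma for the marginal densities of $\beta_B$, which your sketch only asserts.
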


\begin{proof}
Let \(h\) be the voltage for the conductances \eqref{eq:C-from-W-and-U}, with
\begin{equation}\label{eq:h-boundary-values-and-harmonicity}
h_a=1,\qquad h_\delta=0,\qquad
\sum_{j:\{i,j\}\in E}C_{ij}(h_i-h_j)=0\quad(i\in\widehat V).
\end{equation}
By \eqref{eq:U-from-green-function},
\[
2\beta_iU_i=\sum_{j\in\widehat V}W_{ij}U_j+W_{ia}+W_{i\delta}.
\]
Substituting \(C_{ij}=W_{ij}U_iU_j\) into \eqref{eq:h-boundary-values-and-harmonicity} gives
\[2\beta_iU_ih_i-\sum_{j\in\widehat V}W_{ij}U_jh_j=W_{ia}, \text{ i.e. }\widehat G^{-1}(U_ih_i)_{i\in\widehat V}=W_{\widehat Va}\]
together with \eqref{eq:U-from-green-function} gives
\begin{equation}\label{eq:Uh-and-U-one-minus-h}
(U_ih_i)_{i\in\widehat V}=\widehat G W_{\widehat Va},\qquad
(U_i(1-h_i))_{i\in\widehat V}=\widehat G W_{\widehat V\delta}.
\end{equation}
Consequently the current leaving \(a\) is
\begin{equation}\label{eq:K-equals-current-from-a}
\sum_{j:\{a,j\}\in E}C_{aj}(1-h_j)
=W_{a\delta}+W_{a\widehat V}\widehat G W_{\widehat V\delta}=K,
\end{equation}
so \(K\) in \eqref{eq:K-and-M-definition} is the effective conductance between \(a \) and \(\delta \).

For \(e\ne\{a,\delta\}\), let \(T=e\cap\widehat V\) and \(B=\widehat V\setminus T\). Proposition~\ref{prop:potential-marginal-conditional-laws} gives
\begin{equation}\label{eq:beta-B-marginal-law}
\mathcal L(\beta_B)=\nu_B^{W_{BB},\,W_{Ba}+W_{B\delta}+W_{BT}\mathbf1_T}.
\end{equation}
In particular, the law of \(\beta_{B} \) is independent of \(W_e\).

Fix an edge \(\{i,j\}\) with \(i,j\in \widehat{V} \), let \(B=\widehat V\setminus\{i,j\}\), and condition on \(\beta_B\). Define
\[\widehat G_B =\bigl(2\operatorname{diag}(\beta_B)-W_{BB}\bigr)^{-1}, \qquad \check W_{kl}=W_{kl}+W_{kB}\widehat G_BW_{Bl}, \quad k,l\in\{i,j,a,\delta\},\]
and
\[\check\beta_k =\beta_k-\tfrac12W_{kB}\widehat G_BW_{Bk}, \qquad k\in\{i,j\}.\]
Products involving \(B\) are understood to be zero when \(B\) is empty. Proposition~\ref{prop:potential-marginal-conditional-laws} gives
\[\mathcal L\bigl((\check\beta_i,\check\beta_j)\mid\beta_B\bigr) =\nu_{\{i,j\}}^{c,\eta},\]
where
\[c=\check W_{ij},\qquad p=\binom{\check W_{ia}}{\check W_{ja}},\qquad q=\binom{\check W_{i\delta}}{\check W_{j\delta}}, \qquad \eta=p+q.\]
Denote
\[\check H_{\{i,j\}} =\begin{pmatrix}2\check\beta_i&-c\\-c&2\check\beta_j\end{pmatrix},\]
we have
\[\binom{U_i}{U_j}=\check H_{\{i,j\}}^{-1}\eta, \qquad \binom{U_ih_i}{U_jh_j}=\check H_{\{i,j\}}^{-1}p, \qquad K=\check W_{a\delta}+p^{\mathsf T}\check H_{\{i,j\}}^{-1}q.\]
The quantities \(U_1,U_2,h_1,h_2,F(c)\) in Lemma~\ref{lem:two-point-conductance-derivative} are therefore identified with
\[U_i,\quad U_j,\quad h_i,\quad h_j,\quad \mathbb E[K\mid\beta_B]-\check W_{a\delta},\]
respectively. In particular, \(U_1U_2\) corresponds to \(U_iU_j=C_{ij}/W_{ij}\). Conditionally on \(\beta_B\), varying \(W_{ij}\) changes only \(c=\check W_{ij}\), with derivative \(1 \). Since the marginal law of \(\beta_B\) is independent of \(W_{ij}\), the bound \(0\le F'(c)\le1\) gives
\[\mathbb E_{\beta_B}[F'(c)] =\frac{\partial M(W)}{\partial W_{ij}}.\]
At least one coordinate of \(\eta\) is positive, since otherwise \(U_i=U_j=0\). Applying Lemma \ref{lem:two-point-conductance-derivative} conditionally on \(\beta_{B} \) and using \(c\ge W_{ij}\) gives
\begin{equation}\label{eq:conditional-edge-moment-bound}
\begin{aligned}
&0\le\frac{\partial}{\partial W_{ij}}\mathbb E[K\mid\beta_B]\le1,\\
&\mathbb E\left[\frac{C_{ij}^2}{W_{ij}^2}(h_i-h_j)^2\,\middle|\,\beta_B\right]
\le\left(1+\frac1{W_{ij}}\right)
\frac{\partial}{\partial W_{ij}}\mathbb E[K\mid\beta_B].
\end{aligned}
\end{equation}
Averaging \eqref{eq:conditional-edge-moment-bound} over the marginal law of \(\beta_B\) yields
\begin{equation}\label{eq:edge-moment-bound-by-M-derivative}
0\le
\mathbb E_{\beta_{\widehat V}}\left[
\frac{C_{ij}^2}{W_{ij}^2}(h_i-h_j)^2\right]
\le\left(1+\frac1{W_{ij}}\right)
\frac{\partial M(W)}{\partial W_{ij}},
\end{equation}
applying \eqref{eq:two-point-moment-comparison-7over4-2} gives
\begin{equation}\label{eq:edge-moment-comparison-7over4-2}
\mathbb E_{\beta_{\widehat V}}\left[
\left(\frac{C_{ij}}{W_{ij}}\right)^{7/4}(h_i-h_j)^2\right]
\le
\mathbb E_{\beta_{\widehat V}}\left[
\left(\frac{C_{ij}}{W_{ij}}\right)^2(h_i-h_j)^2\right].
\end{equation}

We now establish the same conclusions as in \eqref{eq:edge-moment-bound-by-M-derivative} and \eqref{eq:edge-moment-comparison-7over4-2} for an edge  \(e=\{a,i\}\). Let \(B=\widehat V\setminus\{i\}\), and define \(\widehat G_B\), \(\check W_{kl}\), and \(\check\beta_i\) by
\[\widehat G_B=\bigl(2\operatorname{diag}(\beta_B)-W_{BB}\bigr)^{-1}, \ \check W_{kl}=W_{kl}+W_{kB}\widehat G_BW_{Bl} , \ \check\beta_i=\beta_i-\tfrac12W_{iB}\widehat G_BW_{Bi}.\]
with \(k,l\in\{i,a,\delta\}\). Conditionally on \(\beta_B\), let \(p=\check W_{ia},\ q=\check W_{i\delta},\ s=p+q \). 
The Schur complement gives
\[U_i=\frac{s}{2\check\beta_i},\qquad U_ih_i=\frac{p}{2\check\beta_i},\qquad K=\check W_{a\delta}+\frac{pq}{2\check\beta_i} =\check W_{a\delta}+\frac{pq}{s}U_i.\]
By \eqref{eq:one-vertex-conditional-IG} and these identities,
\begin{equation}\label{eq:boundary-edge-conditional-identities}
\mathcal L(U_i\mid\beta_B)=\operatorname{IG}(1,s),\qquad
h_i=\frac ps,\qquad
\frac{\partial}{\partial W_{ai}}\mathbb E[K\mid\beta_B]=\frac{q^2}{s^2}.
\end{equation}
Indeed, \(\mathbb E[U_i\mid\beta_B]=1\) gives \(\mathbb E[K\mid\beta_B]=\check W_{a\delta}+pq/(p+q)\). With \(\beta_B\) fixed, \(p\) has derivative one with respect to \(W_{ai}\), whereas \(q\) and \(\check W_{a\delta}\) are independent of \(W_{ai}\); hence the derivative is \(q^2/(p+q)^2=q^2/s^2\).

Since \(s\ge W_{ai}\),
\[
\mathbb E[U_i^2(1-h_i)^2\mid\beta_B]
=\left(1+\frac1s\right)\frac{q^2}{s^2}
\le\left(1+\frac1{W_{ai}}\right)
\frac{\partial}{\partial W_{ai}}\mathbb E[K\mid\beta_B].
\]
This gives \eqref{eq:conditional-edge-moment-bound} for \(e=\{a,i\}\), since \(C_{ai}/W_{ai}=U_i\) and \(0\le q^2/s^2\le1\).

As the marginal law of \(\beta_B\) is independent of \(W_{ai}\) and \(0\le q^2/s^2\le1\), differentiation under the expectation gives
\[
\frac{\partial M(W)}{\partial W_{ai}}
=\mathbb E_{\beta_B}\!\left[\frac{q^2}{s^2}\right].
\]
Averaging the conditional estimate therefore gives \eqref{eq:edge-moment-bound-by-M-derivative} for \(e=\{a,i\}\).

The conditional version of \eqref{eq:edge-moment-comparison-7over4-2} for \(e=\{a,i\}\) follows from
\[
\mathbb E[U_i^{7/4}\mid\beta_B]
\le\bigl(\mathbb E[U_i^2\mid\beta_B]\bigr)^{7/8}
\le\mathbb E[U_i^2\mid\beta_B],
\]
where the last inequality uses \(\mathbb E[U_i^2\mid\beta_B]=1+1/s\ge1\). Multiplying by  \((1-h_i)^2\) and averaging over \(\beta_B\) gives \eqref{eq:edge-moment-comparison-7over4-2} for \(e=\{a,i\}\).

Exchanging \(a,\delta\) gives the case \(e=\{i,\delta\}\). For \(e=\{a,\delta\}\), \eqref{eq:K-and-M-definition} gives \(\partial M/\partial W_{a\delta}=1\), while \(C_{a\delta}/W_{a\delta}=h_a-h_\delta=1\), so both estimates hold.

Write \(\rho_W\) for the marginal density of \(\beta_B\), and set
\[D_e(W,\beta_B) =\frac{\partial}{\partial W_e}\mathbb E_W[K\mid\beta_B].\]
As \(W\to W'\) with positive limiting edge weights, dominated convergence in \eqref{eq:F-prime-integral}, and \eqref{eq:boundary-edge-conditional-identities} give
\[D_e(W,\beta_B)\longrightarrow D_e(W',\beta_B) \quad \rho_{W'}\text{-a.e.}, \qquad 0\le D_e\le1.\]
The marginal density in \eqref{eq:beta-B-marginal-law} satisfies
\[\rho_W\longrightarrow\rho_{W'}\quad\text{a.e.}, \qquad \|\rho_W-\rho_{W'}\|_{L^1}\longrightarrow0 \quad\text{by Scheff\'e's lemma}.\]
Consequently,
\[\begin{aligned} \left|\int D_e(W)\rho_W-\int D_e(W')\rho_{W'}\right| &\le \|\rho_W-\rho_{W'}\|_{L^1}\\ &\quad+\int |D_e(W)-D_e(W')|\rho_{W'} \longrightarrow0. \end{aligned}\]
Thus \(M\) is \(C^1\) and nondecreasing in each edge weight.

For each edge, choose an orientation \(e=(i,j)\) and write \(\nabla_e h=h_i-h_j\) and \(\nabla_e f=f_i-f_j\). Since \(W_e=wa_e\), \(a_e\ge1\), and all edge derivatives are nonnegative, \eqref{eq:edge-moment-bound-by-M-derivative} gives
\begin{equation}\label{eq:M-prime-bound-by-edge-sum}
\begin{aligned}
\mathbb E_{\beta_{\widehat V}}\sum_e\frac{C_e^2}{W_e}(\nabla_e h)^2
&\le\sum_e(wa_e+1)\frac{\partial M}{\partial W_e}\\
&\le(w+1)\sum_e a_e\frac{\partial M}{\partial W_e}
=(w+1)M'(w).
\end{aligned}
\end{equation}
By \eqref{eq:h-boundary-values-and-harmonicity} and \eqref{eq:K-equals-current-from-a}, summation by parts gives
\begin{equation}\label{eq:K-current-and-energy-identities}
K=\sum_e C_e\nabla_e h\nabla_e f.
\end{equation}
To obtain the fractional power in \(D_f(w)\), write
\[
C_e\nabla_e h\nabla_e f
=\left[\sqrt{W_e}\left(\frac{C_e}{W_e}\right)^{7/8}\nabla_e h\right]
\left[\sqrt{W_e}\left(\frac{C_e}{W_e}\right)^{1/8}\nabla_e f\right].
\]
Cauchy--Schwarz yields
\[
M(w)^2\le
\left(\mathbb E_{\beta_{\widehat V}}\sum_e
W_e\left(\frac{C_e}{W_e}\right)^{7/4}(\nabla_e h)^2\right)D_f(w).
\]
Finally, \eqref{eq:edge-moment-comparison-7over4-2} and \eqref{eq:M-prime-bound-by-edge-sum} bound the first factor by \((w+1)M'(w)\), giving \eqref{eq:M-differential-inequality}.
\end{proof}

\section{Polynomial bound on \(Z_{r}(w) \)}

We now bound \(Z_{r}(w_0) \) on the wired annular block \(\widetilde{\mathcal A}_r\) centered at \(x \) of Definition~\ref{def:wired-annular-block}, where \(r=2^\ell r_0\). Recall that this graph consists of the lattice points \(z \) with \(r+2\le |z-x|_1\le2r\), together with the vertices \(a=a_\ell\) and \(\delta=a_{\ell+1}\) attached to its inner and outer lozenges \(S_{r+2}(x) \) and \(S_{2r}(x) \), respectively; see Figures~\ref{fig:contracted-ball} and~\ref{fig:adjacent-annular-blocks}.

We choose a cutoff function \(f\) that equals \(1 \) near the inner lozenge \(S_{r+2}(x)\), vanishes near the outer lozenge \(S_{2r}(x)\), and varies linearly with the distance from the center \(x\) between these regions. Specifically, set \(f(a)=1\), \(f(\delta)=0\), and, for \(z\in V(\mathcal A_r)\), define
\begin{equation}\label{eq:annular-cutoff}
f(z)=
\begin{cases}
1,& |z-x|_1\le 5r/4,\\
\displaystyle\frac{7r/4-|z-x|_1}{r/2},
&5r/4<|z-x|_1<7r/4,\\
0,&|z-x|_1\ge 7r/4.
\end{cases}
\end{equation}
Since \(r\ge32\), this cutoff is constant across every edge incident to \(a \) or \(\delta \). On the remaining edges, \(|\nabla_e f|\le2/r\).

Write \(V=V(\widetilde{\mathcal A}_r)\) and \(E=E(\widetilde{\mathcal A}_r)\). For \(w>0\), give each edge \(e\in E\) weight \(W_e=wa_e\), where \(a_e\ge1\) is the number of original lattice edges represented by \(e\). Let \(\widehat V=V\setminus\{a,\delta\}=V(\mathcal A_r)\). Identify \(a\) and \(\delta\) to a single vertex \(\rho\), obtaining \(\mathring V=\widehat V\cup\{\rho\}\) with weights
\[
\mathring W_{ij}=W_{ij}\quad(i,j\in\widehat V),
\qquad
\mathring W_{i\rho}=W_{ia}+W_{i\delta}\quad(i\in\widehat V).
\]
Write \(\mathring E\) for the edge set of this graph, consisting of the unordered pairs \(\{i,j\}\) of distinct vertices in \(\mathring V\) with \(\mathring W_{ij}>0\).

Sample \(\beta_{\widehat V}\) with law \eqref{eq:interior-beta-law}, and define \(U,C\) by \eqref{eq:U-from-green-function}--\eqref{eq:C-from-W-and-U}. By \eqref{eq:contracted-field-from-green-function}, the field \(\mathring u_i=\log U_i\) on \(\widehat V\), with \(\mathring u_\rho=0\), has law \(\mu_{\rho,\mathring V}^{\mathring W}\). Recall that \(K\) in \eqref{eq:K-and-M-definition} is the effective conductance between \(a,\delta\) on the original annular block by \eqref{eq:K-equals-current-from-a}. Define
\begin{equation}\label{eq:M-r-definition}
M_r(w)=\mathbb E_{\beta_{\widehat V}}\!\left[K((wa_e)_e,\beta_{\widehat V})\right]
=M((wa_e)_{e\in E}).
\end{equation}
For an arbitrary orientation \(e=(i,j)\), put \(\nabla_e f=f(i)-f(j)\). \eqref{eq:D-f-definition} becomes
\begin{equation}\label{eq:D-f-on-annulus}
\begin{aligned}
D_f(w)
&=\mathbb E_{\beta_{\widehat V}}\!\left[
\sum_{e\in E}W_e\left(\frac{C_e}{W_e}\right)^{1/4}(\nabla_e f)^2\right]\\
&=w\sum_{\{i,j\}\in E(\mathcal A_r)}
\mathbb E_{\beta_{\widehat V}}\!\left[e^{(\mathring u_i+\mathring u_j)/4}\right]
(f(i)-f(j))^2.
\end{aligned}
\end{equation}
Here we use that \(\nabla_e f=0\) on edges incident to \(a\) or \(\delta\), and every edge of \(E(\mathcal A_r)\) has weight \(w\).

Lemmas~\ref{lem:Z-bound-by-M} and~\ref{lem:M-differential-inequality} give that, for fixed \(r \), \(w\mapsto M_r(w)\) is finite, continuously differentiable, and nondecreasing on \((0,\infty)\), and
\begin{equation}\label{eq:M-r-differential-inequality}
  M_r(w)^2\le(w+1)M_r'(w)D_f(w).
\end{equation}
Moreover, \(M_r(w)>0\), since the network is connected and its conductances are positive. For the field \(u\) on \(V\), pinned at \(u_\delta=0\), \eqref{eq:Z-bound-by-M} gives
\begin{equation}\label{eq:Z-r-bound-by-M-r}
  Z_r(w)=\mathbb E_{\mu_{\delta,V}^{(wa_e)_e}}[e^{u_a/2}]
  \le\left(\frac34\right)^{1/4}
  \frac{\Gamma(1/4)}{2^{1/4}\sqrt\pi}\,M_r(w)^{1/4}.
\end{equation}

\subsection{Mermin--Wagner estimate and bound on \(D_f(w)\)}
Fix \(w_0>0 \). We next bound \(D_f(w)\) uniformly for \(w\in[w_0,w_0+1]\). We use the following estimate from \cite[Lemma~2.1]{Sabot2021}, stated in our notation. See also \cite[Theorem~2]{KozmaPeled2021}.

\begin{lemm}\label{lem:sabot-field-moment-bound}
Let \(\mathcal G=(V,E)\) be a finite connected graph with positive edge weights \(W_{ij}\), and let \(\delta,y\in V\) be distinct vertices. Suppose that \(v:V\to\mathbb R\) satisfies \(v(\delta)=0\) and \(v(y)=1\). Let \(0<s<1\), \(q=(1-s)^{-1}\), and \(\gamma>0\). If
\[
q^2\gamma|v(i)-v(j)|\le\frac12
\qquad\text{for every }\{i,j\}\in E,
\]
then
\begin{equation}\label{eq:sabot-field-moment-bound}
\mathbb E_{\mu_{\delta,V}^W}\!\left[e^{su_y}\right]
\le
\exp\!\left[
-\gamma s+
\gamma^2q^2
\sum_{i\to j}(W_{ij}+1)(v(i)-v(j))^2
\right].
\end{equation}
The sum in \eqref{eq:sabot-field-moment-bound} includes both orientations of every edge.
\end{lemm}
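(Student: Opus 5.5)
The plan is a Mermin--Wagner change of variables: shift the field by the deterministic profile \(-\gamma v\), then use H\"older's inequality to separate the observable from the Radon--Nikodym factor. Write \(\mu=\mu^W_{\delta,V}\), with density \(\rho(u)\) given by \eqref{eq:joint-law-of-u-pinned-at-delta}. Since \(v(\delta)=0\), the translation \(u=\tilde u-\gamma v\) preserves the pinning and has Jacobian one. It gives
\[
\mathbb E_\mu[e^{su_y}]=e^{-\gamma s}\int e^{s\tilde u_y}\,R(\tilde u)\,\mu(d\tilde u),
\qquad R(\tilde u)=\frac{\rho(\tilde u-\gamma v)}{\rho(\tilde u)},
\]
where \(v(y)=1\) produces the factor \(e^{-\gamma s}\). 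Apply H\"older with exponents \(1/s\) and \(q=(1-s)^{-1}\). Lemma~\ref{lem:rerooting} gives \(\mathbb E_\mu[e^{\tilde u_y}]=1\), so
\[
\mathbb E_\mu[e^{su_y}]\le e^{-\gamma s}\,\bigl(\mathbb E_\mu[R^q]\bigr)^{1/q}.
\]
It then suffices to prove \(\mathbb E_\mu[R^q]\le\exp\bigl[q^3\gamma^2\sum_{i\to j}(W_{ij}+1)(v_i-v_j)^2\bigr]\), or any bound of this shape whose exponent, after the power \(1/q\), matches the right-hand side of \eqref{eq:sabot-field-moment-bound}.

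Next I would decompose \(R\) into its three pieces. Write \(x_{ij}=\tilde u_i-\tilde u_j\) and \(t_{ij}=\gamma(v_i-v_j)\).

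The \(\cosh\) part contributes \(\exp\bigl(-\sum W_{ij}[\cosh(x_{ij}-t_{ij})-\cosh x_{ij}]\bigr)\). Expanding,
\[
\cosh(x-t)-\cosh x=\cosh x\,(\cosh t-1)-\sinh x\,\sinh t .
\]
Summing over both orientations of each edge symmetrizes the odd term, and the quadratic remainder is of size \(W_{ij}t_{ij}^2\cosh x_{ij}\).

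The factor \(\prod_i e^{-u_i}\) contributes \(e^{\gamma\sum_i v_i}\). I would split this into edge contributions \(e^{\pm t_{ij}/2}\) along orientations, so that it combines with the determinant factor.

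The determinant part is \(\sqrt{D(W,\tilde u-\gamma v)/D(W,\tilde u)}\). Each spanning-tree monomial is multiplied by \(\prod_{\{i,j\}\in T}e^{-\gamma(v_i+v_j)}\). Combined with \(e^{\gamma\sum v_i}\), this should give a ratio of weighted matrix-tree sums. I would bound that ratio by \(\prod_{\text{edges}}e^{O(|t_{ij}|)}\), or better by a symmetric second-order quantity, using the Laplacian form of \(D\) and the concavity of \(\log\det\). The goal is for the first-order terms to cancel against the first-order terms of the \(\cosh\) part.

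The main obstacle is showing that after raising \(R\) to the power \(q\), all first-order terms in \(t\) cancel in expectation and the second-order terms are integrable. I would handle this through a symmetrization: average the shifts \(+\gamma v\) and \(-\gamma v\), using \(\tfrac12(e^{a}+e^{-a})\le e^{a^2/2}\). This leaves terms of the form \(\exp\bigl(Cq^2\sum W_{ij}t_{ij}^2\cosh x_{ij}\bigr)\). These are controlled by the \(\cosh\) confinement already present in \(\rho\), which is exactly where the hypothesis \(q^2\gamma|v_i-v_j|\le\tfrac12\) enters: it keeps \(q|t_{ij}|\) small enough that \(\mathbb E_\mu\bigl[e^{q^2 t_{ij}^2 W_{ij}\cosh x_{ij}}\cdots\bigr]\) can be absorbed into \(e^{-W_{ij}(\cosh x_{ij}-1)}\). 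Equivalently, the tilted measure is again an \(H^{2|2}\) measure with weights of order \(W_{ij}(1+O(q^2 t_{ij}^2))\). Its normalization is then compared using \(\mathbb E[e^{u_i-u_j}]\le\) const, from Lemma~\ref{lem:rerooting}, which produces the \(+1\) in \((W_{ij}+1)\).

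If this direct route becomes too messy, I would fall back on the supersymmetric representation used in \cite{Sabot2021,KozmaPeled2021}. There the determinant is written as a fermionic integral, and the shift becomes a Ward identity with total weight one. With that representation \(\mathbb E_\mu[R^q]\) can be computed exactly, and only the \(\cosh t-1\le t^2\) estimate remains.
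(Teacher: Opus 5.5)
Your first step is correct: since \(v(\delta)=0\) the translation \(u=\tilde u-\gamma v\) is admissible, and H\"older with exponents \(1/s,q\) together with \(\mathbb E_\mu[e^{\tilde u_y}]=1\) reduces the lemma to \(\mathbb E_\mu[R^q]\le\exp[q^3\gamma^2\sum_{\{i,j\}}(W_{ij}+1)(v_i-v_j)^2]\). (The paper does not reprove the lemma; it imports it from \cite[Lemma~2.1]{Sabot2021}.) The gap is that your route to this bound fails.

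\emph{Why the first-order terms survive.} The part of \(\log R\) that is linear in \(\gamma\) has two pieces. The \(\cosh\) factor gives \(\sum_{\{i,j\}}W_{ij}\sinh(x_{ij})t_{ij}\). The factor \(\sqrt{D(W,u)}\prod_i e^{-u_i}\) gives the average, over the random spanning tree rooted at \(\delta\) with weights \(W_{ij}e^{u_i+u_j}\), of \(\tfrac12\sum_{i\ne\delta}t_{i\,p(i)}\), where \(p(i)\) is the parent of \(i\). These two pieces do not cancel each other. Their sum has mean zero, because \(\mathbb E_\mu[R]=1\), but it does not vanish pointwise, and it is a global sum over the graph, not a small quantity.

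\emph{Why symmetrization does not remove them.} You need the one-sided quantity \(\mathbb E_\mu[R_+^q]\), and \(\mu\) has no \(u\mapsto-u\) symmetry. Bounding it by \(\mathbb E_\mu[R_+^q]+\mathbb E_\mu[R_-^q]\) already costs a factor \(2\) that the statement does not allow. More seriously, after \(\tfrac12(e^a+e^{-a})\le e^{a^2/2}\) what remains is \(\exp(q^2a^2/2)\), where \(a\) is this global first-order sum. That is the square of a sum over all edges and tree edges, not the sum of local terms \(q^2t_{ij}^2W_{ij}\cosh x_{ij}\) that you assert, and the edgewise \(\cosh\) confinement cannot control it.

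\emph{The other proposed tools.} A bound \(\prod e^{O(|t_{ij}|)}\) on the determinant ratio is first order in \(\gamma\) and swamps the gain \(e^{-\gamma s}\). Concavity of \(\log\det\) in the matrix is the wrong convexity for a H\"older interpolation with \(q>1\): it gives lower bounds, and upper bounds only through a tangent line, which reintroduces a global first-order term. The supersymmetric fallback is not a proof either. The shift yields identities such as \(\mathbb E_\mu[R]=1\), not a formula for \(\mathbb E_\mu[R^q]\) with \(q>1\).

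\emph{The missing idea.} Absorb the first-order terms by a second translation, by \(q\gamma v\), rather than trying to cancel them. Write \(\rho=c\,e^{-\Phi+\Psi}\) with \(\Phi(u)=\sum_{\{i,j\}}W_{ij}(\cosh(u_i-u_j)-1)\) and \(\Psi(u)=\tfrac12\log D(W,u)-\sum_{i\ne\delta}u_i\). Since \(D(W,u)\) is a positive combination of exponentials of linear forms in \(u\), \(\Psi\) is convex (H\"older over spanning trees). Put \(u'=\tilde u-q\gamma v\). Because \(\tilde u-\gamma v=\tfrac1q u'+(1-\tfrac1q)\tilde u\), convexity gives exactly \(q\Psi(\tilde u-\gamma v)+(1-q)\Psi(\tilde u)\le\Psi(u')\).

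For each edge, set \(y=x_{ij}-qt_{ij}\). The linear terms in \(q\cosh(y+(q-1)t_{ij})+(1-q)\cosh(y+qt_{ij})\) cancel identically, and this expression is \(\ge(1-\epsilon_{ij})\cosh y\) with \(\epsilon_{ij}\le q^3t_{ij}^2\le\tfrac1{4q}\). This is exactly where the hypothesis \(q^2|t_{ij}|\le\tfrac12\) is used. Hence \(\rho(\tilde u-\gamma v)^q\rho(\tilde u)^{1-q}\le\rho(u')\exp\bigl(\sum\epsilon_{ij}W_{ij}\cosh(u'_i-u'_j)\bigr)\). Changing variables then gives \(\mathbb E_\mu[R^q]\le\mathbb E_\mu\bigl[\exp\bigl(\sum\epsilon_{ij}W_{ij}\cosh(u_i-u_j)\bigr)\bigr]\), with no first-order term left.

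Finally, \(D(W,u)\le\prod_e(1-\epsilon_e)^{-1}D((1-\epsilon)W,u)\), and \(\mu^{(1-\epsilon)W}_{\delta,V}\) is normalized. Together these bound the last expectation by \(e^{\sum_e\epsilon_eW_e}\prod_e(1-\epsilon_e)^{-1/2}\le e^{\sum_e(W_e+1)\epsilon_e}\). Taking the power \(1/q\) gives the statement. It is this spanning-tree comparison, not a bound on \(\mathbb E[e^{u_i-u_j}]\), that produces the \(+1\).
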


Recall that \(r=2^\ell r_0\), where \(r_0\ge32\). To bound \(D_f(w)\) in \eqref{eq:D-f-on-annulus}, we bound the half-moments of \(e^{\mathring u_y}\) uniformly for \(w\in[w_0,w_0+1]\) and
\begin{equation}\label{eq:annular-bulk-definition}
 y\in\mathcal B_r
 :=\left\{z\in V(\mathcal A_r):
 \frac{5r}{4}-1\le |z-x|_1\le\frac{7r}{4}+1\right\}.
\end{equation}

\begin{prop}\label{prop:annular-half-moment-and-D-f-bounds}
Fix \(w_0>0\) and define
\begin{equation}\label{eq:alpha-bulk-and-C-f}
\alpha_{\mathrm{bulk}}(w_0)=\frac{1}{2048(w_0+2)},
\qquad
C_f(w_0)=72(w_0+1)16^{\alpha_{\mathrm{bulk}}(w_0)}.
\end{equation}
For every \(r=2^\ell r_0\), with \(r_0\ge32\) and \(\ell\ge0\), and every \(w\in[w_0,w_0+1]\), let \(\mathring u\) be the field on the graph \(\mathring V\), pinned at \(\rho\), with weights induced by \(W_e=wa_e\). With \(\mathbb E_{\beta_{\widehat V}}\) denoting expectation under \eqref{eq:interior-beta-law},
\begin{equation}\label{eq:annular-half-moment-bound-in-r}
\mathbb E_{\beta_{\widehat V}}\!\left[e^{\mathring u_y/2}\right]
\le16^{\alpha_{\mathrm{bulk}}(w_0)}r^{-\alpha_{\mathrm{bulk}}(w_0)},
\qquad y\in\mathcal B_r.
\end{equation}
For \(D_f(w)\) defined in \eqref{eq:D-f-on-annulus},
\begin{equation}\label{eq:D-f-polynomial-bound}
D_f(w)\le C_f(w_0)r^{-\alpha_{\mathrm{bulk}}(w_0)}.
\end{equation}
\end{prop}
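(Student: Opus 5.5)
We apply Lemma~\ref{lem:sabot-field-moment-bound} on the contracted graph \(\mathring V\), pinned at \(\rho\), with \(s=1/2\), so \(q=2\). For a target \(y\in\mathcal B_r\), the test function \(v\) must vanish at \(\rho\), equal \(1\) at \(y\), and have total Dirichlet energy of order \(1/\log r\). This is the two-dimensional Mermin--Wagner mechanism.

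Observe that \(y\) lies at \(\ell^1\)-distance at least \(r/4-3\ge r/8\) from both lozenges \(S_{r+2}(x)\) and \(S_{2r}(x)\); this uses \(r\ge32\). Every edge to \(\rho\) touches one of those lozenges. So the \(\ell^1\)-ball \(B(y,r/8)\) sits inside \(V(\mathcal A_r)\) and does not meet the vertices adjacent to \(\rho\). We take the logarithmic profile
\[
v(z)=\max\!\left(0,\ \frac{\log(r/8)-\log(1+|z-y|_1)}{\log(r/8)}\right),
\]
extended by \(v(\rho)=0\). Then \(v(y)=1\), \(v\) vanishes outside \(B(y,r/8)\), and hence \(v\) vanishes on every edge to \(\rho\). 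Moreover \(|v(i)-v(j)|\le (\log(r/8))^{-1}(1+|z-y|_1)^{-1}\le 1/\log(r/8)\). The sphere \(S_k(y)\) carries at most \(8k+4\) edges to \(S_{k+1}(y)\). Summing shell by shell gives
\[
\sum_{i\to j}(v(i)-v(j))^2\le \frac{2}{\log^2(r/8)}\sum_{k\ge0}\frac{8k+4}{(1+k)^2}\le \frac{16}{\log(r/8)}+O(\log^{-2}),
\]
and a slightly cruder constant absorbs the lower-order term. The interior edge weights are \(w\le w_0+1\), so \(W_{ij}+1\le w_0+2\).

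We then choose \(\gamma=c\log(r/8)\) with \(c\) small, of order \((w_0+2)^{-1}\). This choice satisfies the gradient constraint \(4\gamma|\nabla v|\le 1/2\), since \(4c\le 1/2\). The exponent in \eqref{eq:sabot-field-moment-bound} is then
\[
-\tfrac{\gamma}{2}+4\gamma^2(w_0+2)\tfrac{C}{\log(r/8)}=-\log(r/8)\bigl(\tfrac c2-4Cc^2(w_0+2)\bigr).
\]
Optimizing \(c\) gives a bound \((r/8)^{-\alpha}\) with \(\alpha\asymp (w_0+2)^{-1}\). With conservative constants one recovers \(\alpha_{\mathrm{bulk}}=1/(2048(w_0+2))\). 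Since \(8^{\alpha}\le16^{\alpha}\), this yields \eqref{eq:annular-half-moment-bound-in-r}. The main bookkeeping obstacle is the constants: the edge count per shell, the factor from both orientations, and the constraint \(c\le1/8\) must combine to give exactly \(2048\). I would simply fix \(c=1/(64(w_0+2))\) or similar and check.

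For \eqref{eq:D-f-polynomial-bound}, use \eqref{eq:D-f-on-annulus}. Only edges with \(f(i)\ne f(j)\) contribute, and both endpoints of such edges lie in \(\mathcal B_r\). On each of them, \((f(i)-f(j))^2\le 4/r^2\). Cauchy--Schwarz gives
\[
\mathbb E[e^{(\mathring u_i+\mathring u_j)/4}]\le \mathbb E[e^{\mathring u_i/2}]^{1/2}\mathbb E[e^{\mathring u_j/2}]^{1/2}\le 16^{\alpha}r^{-\alpha}.
\]
The number of edges within \(5r/4-1\le|z-x|_1\le 7r/4+1\) is at most about \(2\cdot 4\cdot (7r/4+1)\cdot(r/2+3)\le 18 r^2/ 4\cdot 4\), i.e.\ at most \(18r^2\) for \(r\ge32\). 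Together with the prefactor \(w\le w_0+1\), this gives
\[
D_f(w)\le (w_0+1)\cdot 18r^2\cdot \tfrac{4}{r^2}\cdot16^{\alpha}r^{-\alpha}=72(w_0+1)16^{\alpha}r^{-\alpha},
\]
which is the stated \(C_f(w_0)\).
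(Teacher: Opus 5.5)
Your proposal is correct and follows the paper's proof essentially step for step. The shared steps are: Sabot's lemma on \((\mathring V,\mathring E)\) with \(s=1/2\), \(q=2\); a logarithmic cutoff of radius about \(r/8\) around \(y\) (the paper uses \(N=\lfloor r/8\rfloor\ge r/16\), hence the factor \(16^{\alpha_{\mathrm{bulk}}}\)); the count of \(16k+8\) oriented edges between consecutive shells, giving energy at most \(32(w+1)/\log N\); and then Cauchy--Schwarz with at most \(18r^2\) edges for \(D_f\).

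The only slip is your tentative choice \(c=1/(64(w_0+2))\). With energy constant \(32\) the exponent is \(\log(r/8)\bigl(-c/2+128c^2(w_0+2)\bigr)\), which is positive for that \(c\). The optimal \(c=1/(512(w_0+2))\), matching the paper's \(\gamma=\log N/(512(w+1))\), gives exactly \(-\log(r/8)/(2048(w_0+2))\).
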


\begin{proof}
Fix \(y\in\mathcal B_r\) and set \(N=\lfloor r/8\rfloor\ge4\). Define \(v_y:\mathring V\to\mathbb R\) by
\begin{equation}\label{eq:logarithmic-cutoff-v-y}
v_y(z)=
\frac{\bigl[\log(N/(1+|z-y|_1))\bigr]_+}{\log N}
\quad(z\in\widehat V),
\qquad v_y(\rho)=0,
\end{equation}
where \([b]_+=\max\{b,0\}\), so \(v_y(y)=1\). Since \(v_y\) vanishes at \(\rho\) and its neighbors, only edges in \(E(\mathcal A_r)\), all of weight \(w\), contribute to the sum in \eqref{eq:sabot-field-moment-bound}.
\[
|v_y(i)-v_y(j)|\le\frac{\log2}{\log N}
\qquad(\{i,j\}\in\mathring E).
\]
For each \(0\le n\le N-2\), there are \(16n+8\) oriented lattice edges joining the layers \(|z-y|_1=n\) and \(|z-y|_1=n+1\), counting both orientations. Using \(\log(1+b)\le b\), we obtain
\begin{equation}\label{eq:logarithmic-cutoff-energy-bound}
\begin{aligned}
\sum_{i\to j:\,\{i,j\}\in\mathring E}
(\mathring W_{ij}+1)(v_y(i)-v_y(j))^2
&\le\frac{w+1}{(\log N)^2}
\sum_{n=0}^{N-2}(16n+8)
\left(\log\frac{n+2}{n+1}\right)^2\\
&\le\frac{16(w+1)}{(\log N)^2}
\sum_{k=1}^{N-1}\frac1k
\le\frac{32(w+1)}{\log N}.
\end{aligned}
\end{equation}

We can apply Lemma~\ref{lem:sabot-field-moment-bound} on \((\mathring V,\mathring E)\), with pinning vertex \(\rho\), weights \(\mathring W\), and \(s=\frac12,\  q=2,\ \gamma=\frac{\log N}{512(w+1)}\), by checking the condition
\[
q^2\gamma|\nabla_e v_y|
\le\frac{\log2}{128(w+1)}<\frac12.
\]
We therefore obtain
\begin{equation}\label{eq:annular-half-moment-bound-in-N}
\begin{aligned}
\mathbb E_{\beta_{\widehat V}}\!\left[e^{\mathring u_y/2}\right]
&=\mathbb E_{\mu_{\rho,\mathring V}^{\mathring W}}\!\left[e^{\mathring u_y/2}\right]\le\exp\!\left[-\frac\gamma2
+4\gamma^2\frac{32(w+1)}{\log N}\right]\\
&=\exp\!\left[-\frac{\log N}{2048(w+1)}\right]
=N^{-1/(2048(w+1))}.
\end{aligned}
\end{equation}
Since \(w\le w_0+1\) and \(N\ge r/16\),
\[
N^{-1/(2048(w+1))}
\le N^{-\alpha_{\mathrm{bulk}}(w_0)}
\le16^{\alpha_{\mathrm{bulk}}(w_0)}r^{-\alpha_{\mathrm{bulk}}(w_0)},
\]
which gives \eqref{eq:annular-half-moment-bound-in-r}.

If \(\nabla_e f\ne0\) for \(e=\{i,j\}\), then \(i,j\in\mathcal B_r\). Cauchy--Schwarz and \eqref{eq:annular-half-moment-bound-in-r} give
\[
\begin{aligned}
\mathbb E_{\beta_{\widehat V}}\!\left[e^{(\mathring u_i+\mathring u_j)/4}\right]
&\le\left(
\mathbb E_{\beta_{\widehat V}}[e^{\mathring u_i/2}]
\mathbb E_{\beta_{\widehat V}}[e^{\mathring u_j/2}]
\right)^{1/2}\\
&\le16^{\alpha_{\mathrm{bulk}}(w_0)}r^{-\alpha_{\mathrm{bulk}}(w_0)}.
\end{aligned}
\]
At most \(18r^2\) edges verify \(0 < |\nabla_e f|\le2/r\). By \eqref{eq:D-f-on-annulus}, we obtain
\[
\begin{aligned}
D_f(w)
&\le w\,(18r^2)\frac4{r^2}
16^{\alpha_{\mathrm{bulk}}(w_0)}r^{-\alpha_{\mathrm{bulk}}(w_0)}\\
&\le72(w_0+1)16^{\alpha_{\mathrm{bulk}}(w_0)}r^{-\alpha_{\mathrm{bulk}}(w_0)}
=C_f(w_0)r^{-\alpha_{\mathrm{bulk}}(w_0)}.
\end{aligned}
\]
\end{proof}

\subsection{Decay of \(M_r(w)\) and \(Z_r(w)\)}

Fix \(w_0>0\). Combining \eqref{eq:M-r-differential-inequality} and \eqref{eq:D-f-polynomial-bound}, and using \(w+1\le w_0+2\), we obtain
\begin{equation}\label{eq:M-r-prime-lower-bound}
M_r'(w)\ge
\frac{r^{\alpha_{\mathrm{bulk}}(w_0)}}{(w_0+2)C_f(w_0)}
M_r(w)^2,
\qquad w\in[w_0,w_0+1].
\end{equation}
Since \(M_r\) is positive and continuously differentiable, division by \(M_r(w)^2\) and integration give
\[\frac{1}{M_r(w_0)}-\frac{1}{M_r(w_0+1)} = \int_{w_0}^{w_0+1}\frac{M_r'(w)}{M_r(w)^2}\,dw \ge \frac{r^{\alpha_{\mathrm{bulk}}(w_0)}}{(w_0+2)C_f(w_0)}.\]
As \(M_r(w_0+1)>0\), it follows that
\begin{equation}\label{eq:M-r-polynomial-bound}
M_r(w_0)\le
(w_0+2)C_f(w_0)r^{-\alpha_{\mathrm{bulk}}(w_0)}.
\end{equation}
Applying \eqref{eq:Z-r-bound-by-M-r} yields
\[Z_r(w_0)\le \frac{\Gamma(1/4)}{2^{1/4}\sqrt\pi} \left(\frac{3(w_0+2)C_f(w_0)}{4}\right)^{1/4} r^{-\alpha_{\mathrm{bulk}}(w_0)/4}.\]
Thus
\begin{equation}\label{eq:Z-r-polynomial-bound}
\mathbb E_{\mu_{\delta,V(\widetilde{\mathcal A}_r)}^{(w_0a_e)_e}}
\!\left[e^{u_{a_\ell}/2}\right]
=Z_r(w_0)
\le C_{w_0}(2^\ell r_0)^{-\alpha(w_0)},
\qquad r=2^\ell r_0,
\end{equation}
where
\begin{equation}\label{eq:Z-r-decay-constants}
\begin{aligned}
\alpha(w_0)
&=\frac{\alpha_{\mathrm{bulk}}(w_0)}{4}
=\frac{1}{8192(w_0+2)},\\
C_{w_0}
&=\frac{\Gamma(1/4)}{2^{1/4}\sqrt\pi}
\left(\frac{3(w_0+2)C_f(w_0)}{4}\right)^{1/4}.
\end{aligned}
\end{equation}
The bound \eqref{eq:Z-r-polynomial-bound} holds uniformly in \(\ell\ge0\) and \(r_0\ge32\).

\section{From annular bound to exponential decay}

\subsection{Factorization on \(\check{B}(x,R) \) and superpolynomial decay}

Fix \(r_0\ge32\) and let \(R=2^nr_0\), with \(n\ge1\). Start from \(B(x,R)\) with its exterior wired to \(\delta\), and contract each layer \(S_{2^\ell r_0+1}(x)\), for \(0\le\ell<n\), to \(a_\ell\). Set \(a_n=\delta\). The resulting graph \(\check B(x,R)\) is a chain of blocks: \(\mathcal B_0\) is the inner ball \(B(x,r_0)\) wired to \(a_0\), and recall Definition \ref{def:wired-annular-block}:
\[
\mathcal B_k=\widetilde{\mathcal A}_{2^{k-1}r_0},
\qquad 1\le k\le n.
\]
Write \(V_k=V(\mathcal B_k)\) for \(0\le k\le n\). Consecutive blocks intersect at a single vertex,
\[
V_{k-1}\cap V_k=\{a_{k-1}\},\qquad 1\le k\le n,
\]
nonconsecutive blocks are disjoint, and every edge belongs to exactly one block. All edge weights are inherited from the original lattice coupling \(W\), with parallel weights added and loops deleted; the superscript \(W\) on each field law below abbreviates this collection of weights.

For the field \(u\) on \(\check B(x,R)\), pinned at \(\delta=a_n\), define the relative field on each block by
\begin{equation}\label{eq:block-relative-fields}
u_z^{(k)}=u_z-u_{a_k},
\qquad z\in V_k,\quad 0\le k\le n.
\end{equation}

\begin{lemm}\label{lem:block-field-independence}
Under \(\mu_{\delta,V(\check B(x,R))}^W\), the fields \(u^{(0)},\ldots,u^{(n)}\) defined in \eqref{eq:block-relative-fields} are independent, and \(u^{(k)}\) has law \(\mu_{a_k,V_k}^W\) for every \(0\le k\le n\).
\end{lemm}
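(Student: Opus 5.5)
The plan is a direct computation with the explicit density \eqref{eq:joint-law-of-u-pinned-at-delta} on \(\check B(x,R)\), pinned at \(\delta=a_n\). The map \(u\mapsto(u^{(0)},\dots,u^{(n)})\) is a linear bijection, since each block field \(u^{(k)}\) lives on \(V_k\setminus\{a_k\}\). Its inverse is obtained by telescoping from the outside in:
\[
u_{a_k}=\sum_{m=k+1}^{n}u^{(m)}_{a_{m-1}},
\qquad
u_z=u^{(k)}_z+u_{a_k}\quad(z\in V_k\setminus\{a_{k-1},a_k\}).
\]
This map is triangular with unit diagonal, so its Jacobian is \(1\). It then suffices to show that the density of \(\mu^W_{\delta,V(\check B(x,R))}\) factorizes as \(\prod_k\) of the density of \(\mu^W_{a_k,V_k}\) evaluated at \(u^{(k)}\).

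\emph{Cosh term.} Every edge belongs to exactly one block \(V_k\). For an edge \(\{i,j\}\) in \(V_k\) we have \(u_i-u_j=u^{(k)}_i-u^{(k)}_j\). Hence \(\sum_{\{i,j\}}W_{ij}(\cosh(u_i-u_j)-1)\) splits as a sum over blocks of the block cosh energies.

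\emph{Determinant.} The graph \(\check B(x,R)\) is a chain of blocks glued at the cut vertices \(a_k\). Its spanning trees are therefore exactly the unions \(T_0\cup\dots\cup T_n\) with \(T_k\) a spanning tree of block \(k\), so \(D(W,u)=\prod_k D_k(W,u|_{V_k})\). Inside block \(k\), \(e^{u_i+u_j}=e^{2u_{a_k}}e^{u^{(k)}_i+u^{(k)}_j}\), which gives
\[
D_k(W,u)=e^{2(|V_k|-1)u_{a_k}}D_k(W,u^{(k)}).
\]
Taking square roots produces the factor \(e^{(|V_k|-1)u_{a_k}}\).

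\emph{Measure factor.} The factor \(\prod_{i\ne\delta}e^{-u_i}\) is regrouped by assigning to block \(k\) the vertices \(V_k\setminus\{a_k\}\); these sets partition \(V(\check B)\setminus\{\delta\}\), because \(a_{k-1}\) is assigned to block \(k-1\)'s partner, namely block \(k\). This yields
\[
\prod_k e^{-\sum_{z\in V_k\setminus\{a_k\}}u^{(k)}_z}\,e^{-(|V_k|-1)u_{a_k}}.
\]
The extra factors \(e^{-(|V_k|-1)u_{a_k}}\) cancel exactly the factors \(e^{(|V_k|-1)u_{a_k}}\) from the determinant.

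\emph{Normalization.} The constants \((2\pi)^{-(|V|-1)/2}\) also factor, since \(\sum_k(|V_k|-1)=|V(\check B)|-1\). The joint density is thus the product of the block densities \(\mu^W_{a_k,V_k}\), which gives independence and the stated marginals.

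The main bookkeeping obstacle is this exponent cancellation. It requires attributing each vertex \(a_{k-1}\), \(k\ge1\), to the block \(V_{k-1}\) in which it is non-pinned, and checking that the tree count in \(D_k\) uses \(|V_k|-1\) edges. A secondary point is the spanning tree factorization across cut vertices, which is standard. One must also check that the contracted weights (parallel edges summed, loops deleted) are the same in the whole graph and in each block, which holds by construction.
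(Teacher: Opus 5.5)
Your proposal is correct and follows the same route as the paper: a unit-Jacobian change of variables, splitting of the \(\cosh\) energy over the edge-disjoint blocks, the factorization \(D=\prod_k e^{2(|V_k|-1)u_{a_k}}D_{\mathcal B_k}(W,u^{(k)})\) of the spanning-tree polynomial, and cancellation against the regrouped factor \(e^{-\sum_{z\ne\delta}u_z}\). Your triangular inverse only makes explicit the Jacobian claim that the paper states without proof. One slip to fix: in your last paragraph, \(a_{k-1}\) must be attributed to \(V_k\), where it is non-pinned, not to \(V_{k-1}\), where it is the pinning vertex; your main computation, which groups by \(V_k\setminus\{a_k\}\), already does this correctly.
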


\begin{proof}
We use the density \eqref{eq:joint-law-of-u-pinned-at-delta}. The \(\cosh \) term splits over the blocks because the edge sets are disjoint and \(u_i-u_j=u_i^{(k)}-u_j^{(k)}\) on block \(k\).

Restricting a global spanning tree to the blocks, retaining their shared vertices, gives one spanning tree on each block; conversely, their union is a global spanning tree. Denote by \(D_{\mathcal B_k}\) the spanning-tree polynomial \eqref{eq:spanning-tree-polynomial} on block \(k\), and similarly by \(D_{\check B(x,R)}\) the global polynomial. Since \(u_i=u_i^{(k)}+u_{a_k}\) on block \(k\), each edge weight \(W_{ij}e^{u_i+u_j}\) contributes a factor \(e^{2u_{a_k}}\) relative to \(u^{(k)} \), and each block tree has \(|V_k|-1\) edges; hence
\begin{equation}\label{eq:spanning-tree-polynomial-by-blocks}
D_{\check B(x,R)}(W,u)
=
\prod_{k=0}^n
e^{2(|V_k|-1)u_{a_k}}
D_{\mathcal B_k}(W,u^{(k)}).
\end{equation}
Also
\[
\sum_{z\ne\delta}u_z
=
\sum_{k=0}^n
\left[
\sum_{z\in V_k\setminus\{a_k\}}u_z^{(k)}
+(|V_k|-1)u_{a_k}
\right],
\]
and the shifts in \(e^{-\sum_{z\ne\delta}u_z}\) cancel the factors in the square root of \eqref{eq:spanning-tree-polynomial-by-blocks}.

The change of variables \eqref{eq:block-relative-fields} has Jacobian \( 1\). Also, \(|V(\check B(x,R))|-1=\sum_{k=0}^n(|V_k|-1)\), so the powers of \(2\pi\) split. Thus,
\begin{equation}\label{eq:field-density-product-over-blocks}
d\mu_{\delta,V(\check B(x,R))}^W(u)
=
\prod_{k=0}^n d\mu_{a_k,V_k}^W(u^{(k)}).
\end{equation}
\end{proof}

Since \(u_{a_n}=0\), 
\[
u_x=u_x^{(0)}+\sum_{k=1}^n u_{a_{k-1}}^{(k)}.
\]
Lemma~\ref{lem:block-field-independence} therefore gives
\begin{equation}\label{eq:half-moment-product-over-blocks}
\mathbb E_{\mu_{\delta,V(\check B(x,R))}^W}
\!\left[e^{u_x/2}\right]
=
\mathbb E_{\mu_{a_0,V_0}^W}
\!\left[e^{u_x^{(0)}/2}\right]
\prod_{k=1}^n
\mathbb E_{\mu_{a_k,V_k}^W}
\!\left[e^{u_{a_{k-1}}^{(k)}/2}\right].
\end{equation}
The first factor is at most one by \eqref{eq:rerooting} and Cauchy--Schwarz. Applying \eqref{eq:Z-r-polynomial-bound} to each remaining factor yields
\begin{equation}\label{eq:half-moment-bound-by-dyadic-product}
\begin{aligned}
\mathbb E_{\mu_{\delta,V(\check B(x,R))}^W}
\!\left[e^{u_x/2}\right]
&\le\prod_{\ell=0}^{n-1}C_W(2^\ell r_0)^{-\alpha(W)}\\
&=C_W^n r_0^{-\alpha(W)n}
2^{-\alpha(W)n(n-1)/2}.
\end{aligned}
\end{equation}
The logarithm of the right-hand side equals
\[
-\frac{\alpha(W)\log2}{2}n^2
+n\left(\log C_W-\alpha(W)\log r_0
+\frac{\alpha(W)\log2}{2}\right).
\]
Since \(\log R=\log r_0+n\log2\), there are constants \(C,c>0\), depending only on \(W,r_0\), such that
\begin{equation}\label{eq:contracted-ball-half-moment-log-squared-decay}
\mathbb E_{\mu_{\delta,V(\check B(x,R))}^W}
\!\left[e^{u_x/2}\right]
\le C\exp\!\left[-c(\log R)^2\right],
\qquad R=2^nr_0.
\end{equation}

To pass from \(\check{B}(x,R) \) to the original wired ball, we use monotonicity theorem in \cite[Theorem~5 and 6]{Poudevigne2024}, also stated in \cite[Fact~2.2]{DisertoriMerklRolles2025}: increasing edge weights decreases the expectation of every convex function of \(e^{u_x}\). Applying this comparison to the convex function \(t\mapsto-\sqrt t\)  gives
\begin{equation}\label{eq:half-moment-comparison-under-contraction}
\mathbb E_{\mu_{\delta,V(B(x,R))\cup\{\delta\}}^W}
\!\left[e^{u_x/2}\right]
\le
\mathbb E_{\mu_{\delta,V(\check B(x,R))}^W}
\!\left[e^{u_x/2}\right].
\end{equation}
Together with \eqref{eq:contracted-ball-half-moment-log-squared-decay}, this gives the bound \(\mathbb E_{\mu_{\delta,V(B(x,R))\cup\{\delta\}}^W}[e^{u_x/2}]\le C e^{-c(\log R)^2}\).

\subsection{Lieb Simon argument}

To deduce exponential decay from the superpolynomial bound, we adapt the finite-size criterion of Simon and Lieb \cite{Simon1980,Lieb1980}, more precisely its fractional-moment counterpart in \cite[Section~4.a]{ASFH2001}, to the \(H^{2|2}\)-model.

Let \((\beta_z)_{z\in\mathbb Z^2}\) be the potential with nearest-neighbour coupling \(W>0\). Its restriction to a finite set \(D\subset\mathbb Z^2\) has law
\[
\beta_D\sim\nu_D^{W_{DD},\eta_D},
\qquad
\eta_D(z)=\sum_{v\notin D}W_{zv},
\]
where \(W_{zv}=W\mathbf1_{\{|z-v|_1=1\}}\). These finite-dimensional laws are consistent by \eqref{eq:beta-marginal-law}. All Green matrices below are formed from the same potential. For a finite region \(\Omega\), set
\[
H_\Omega=2\operatorname{diag}(\beta_\Omega)-W_{\Omega\Omega},
\qquad G_\Omega=H_\Omega^{-1}.
\]
For every finite \(D\subset\mathbb Z^2\), let \(\widehat G_D=(2\operatorname{diag}(\beta_D)-W_{DD})^{-1}\). 

Fix \(x,y\in\Omega\) with \(|x-y|_1\ge R+2\), and partition
\begin{equation}\label{eq:Omega-partition-into-A-S-C}
\Omega=A\sqcup S\sqcup C,
\qquad
A=\Omega\cap B(x,R),\quad
S=\Omega\cap S_{R+1}(x),\quad
C=\Omega\setminus B(x,R+1).
\end{equation}
Then \(x\in A\), \(y\in C\), and \(W_{AC}=0\). If \(S\) is empty, \(G_\Omega(x,y)=0\); hence we assume \(S\ne\varnothing\).

In the order \(A,S,C\), the matrix has the block form
\[
H_\Omega=
\begin{pmatrix}
H_A&-W_{AS}&0\\
-W_{SA}&H_S&-W_{SC}\\
0&-W_{CS}&H_C
\end{pmatrix}.
\]
Define
\begin{equation}\label{eq:schur-complement-on-S}
\begin{aligned}
\check W
&=W_{SS}
+W_{SA}\widehat G_AW_{AS}
+W_{SC}\widehat G_CW_{CS},\\
\check G_S
&=\bigl(2\operatorname{diag}(\beta_S)-\check W\bigr)^{-1}.
\end{aligned}
\end{equation}
Schur complement gives \(\check G_S=(G_\Omega)_{SS}\) and
\begin{equation}\label{eq:green-function-expansion-through-S}
\begin{aligned}
G_\Omega(x,y)
&=\bigl(\widehat G_AW_{AS}\check G_SW_{SC}\widehat G_C\bigr)_{xy}\\
&=\sum_{v,w\in S}
\left(\sum_{u\in A}\widehat G_A(x,u)W_{uv}\right)
\check G_S(v,w)
\left(\sum_{z\in C}W_{wz}\widehat G_C(z,y)\right).
\end{aligned}
\end{equation}
The first factor sums paths from \(x\) up to their first visit to \(S\), while the last sums paths after their last visit to \(S\). The middle factor includes all intervening excursions, including excursions back into \(A\) or \(C\).

We next integrate over the potential on \(S\). By Proposition~\ref{prop:potential-marginal-conditional-laws}, conditionally on \(\beta_A,\beta_C\), the law of \(\beta_S\) is \(\nu_S^{\check W,\check\eta}\), where
\begin{equation}\label{eq:conditional-eta-on-S}
\check\eta
=(\eta_\Omega)_S
+W_{SA}\widehat G_A(\eta_\Omega)_A
+W_{SC}\widehat G_C(\eta_\Omega)_C.
\end{equation}

The two vectors multiplying \(\check G_S\) in \eqref{eq:green-function-expansion-through-S} are nonnegative and fixed given \(\beta_A,\beta_C\). Applying \eqref{eq:green-bilinear-moment-with-eta}  therefore yields
\begin{equation}\label{eq:green-moment-conditioned-on-A-and-C}
\begin{aligned}
\mathbb E\!\left[
G_\Omega(x,y)^s\mid\beta_A,\beta_C
\right]
\le C_s
\left(\sum_{\substack{u\in A\\v\in S}}
\widehat G_A(x,u)W_{uv}\right)^s
\left(\sum_{\substack{w\in S\\z\in C}}
W_{wz}\widehat G_C(z,y)\right)^s.
\end{aligned}
\end{equation}
Because \(W_{AC}=0\), \(\beta_A\) and \(\beta_C\) are independent. Taking expectations in \eqref{eq:green-moment-conditioned-on-A-and-C} gives
\begin{equation}\label{eq:green-moment-bound-by-A-C-product}
\begin{aligned}
\mathbb E[G_\Omega(x,y)^s]
\le C_s\,
\mathbb E\!\left[
\left(\sum_{\substack{u\in A\\v\in S}}
\widehat G_A(x,u)W_{uv}\right)^s
\right]
\mathbb E\!\left[
\left(\sum_{\substack{w\in S\\z\in C}}
W_{wz}\widehat G_C(z,y)\right)^s
\right].
\end{aligned}
\end{equation}

For the first expectation in the r.h.s., define
\begin{equation}\label{eq:Psi-R-definition}
\Psi_R^x
=\bigl(\widehat G_{B(x,R)}\eta_{B(x,R)}\bigr)_x.
\end{equation}
Wiring the exterior of \(B(x,R)\) to \(\delta\) gives edge weights
\[
W_{z\delta}=\eta_{B(x,R)}(z)
=\sum_{v\notin B(x,R)}W_{zv}.
\]
For the field pinned at \(u_\delta=0\),
\[
H_{B(x,R)}(e^{u_z})_{z\in B(x,R)}=\eta_{B(x,R)},
\qquad
 e^{u_x}=\bigl(\widehat G_{B(x,R)}\eta_{B(x,R)}\bigr)_x=\Psi_R^x.
\]

If \(B(x,R+1)\subset\Omega\), then \(A=B(x,R)\), so
\[
\sum_{v\in S}W_{uv}=\eta_{B(x,R)}(u),\qquad u\in A.
\]
Consequently,
\[
\begin{aligned}
\sum_{\substack{u\in A\\v\in S}}
\widehat G_A(x,u)W_{uv}
&=\sum_{u\in B(x,R)}
\widehat G_{B(x,R)}(x,u)\eta_{B(x,R)}(u)=\Psi_R^x.
\end{aligned}
\]
For a general \(\Omega\), we have
\begin{equation}\label{eq:green-boundary-sum-bound-by-Psi-R}
\sum_{\substack{u\in A\\v\in S}}
\widehat G_A(x,u)W_{uv}
\le\Psi_R^x.
\end{equation}
Because \(\widehat G_A(x,u)\le\widehat G_{B(x,R)}(x,u)\).

For the second expectation, set
\[
\sigma_z=\sum_{w\in S}W_{wz},
\qquad z\in C.
\]
Subadditivity for \(0<s<1\) gives
\[
\mathbb E\!\left[
\left(\sum_{\substack{w\in S\\z\in C}}
W_{wz}\widehat G_C(z,y)\right)^s
\right]
\le
\sum_{z\in C}\sigma_z^s\,
\mathbb E[\widehat G_C(z,y)^s].
\]
Combining this with \eqref{eq:green-moment-bound-by-A-C-product} and \eqref{eq:green-boundary-sum-bound-by-Psi-R}, we obtain
\begin{equation}\label{eq:simon-lieb-fractional-moment-inequality}
\mathbb E[G_\Omega(x,y)^s]
\le
C_s\,\mathbb E[(\Psi_R^x)^s]
\sum_{z\in C}\sigma_z^s\,
\mathbb E[\widehat G_C(z,y)^s].
\end{equation}
Since \(\sigma_z>0\) requires \(z\in C\) to be a nearest neighbour of \(S\), it implies \(|z-x|_1=R+2\). The triangle inequality gives
\[
|z-y|_1\ge|x-y|_1-R-2,
\qquad
\sum_{z\in C}\sigma_z^s
\le W^s\bigl[4+4(R+1)2^s\bigr].
\]
For fixed \(W,s\), \(\sum_{z\in C}\sigma_z^s=O(R)\), while Jensen's inequality gives
\[
\mathbb E[(\Psi_R^x)^s]
\le
\left(\mathbb E[(\Psi_R^x)^{1/2}]\right)^{2s},
\qquad 0<s<\frac12.
\]
By \eqref{eq:half-moment-comparison-under-contraction} and \eqref{eq:contracted-ball-half-moment-log-squared-decay}, \(\mathbb E[(\Psi_R^0)^{1/2}]\le C e^{-c(\log R)^2}\). For each fixed \(0<s<1/2\) we can choose \(R\) so large that
\[
q_R:=C_sW^s\bigl[4+4(R+1)2^s\bigr]\,
\mathbb E[(\Psi_R^0)^s]<1.
\]
For integers \(d\ge0\), define
\[
F_s(d)=
\sup_{\substack{\Omega\subset\mathbb Z^2\text{ finite}\\
x,y\in\Omega,\ |x-y|_1\ge d}}
\mathbb E[G_\Omega(x,y)^s].
\]
The bound \eqref{eq:green-bilinear-moment-with-eta} gives \(F_s(d)\le C_s\). \eqref{eq:simon-lieb-fractional-moment-inequality} and translation invariance imply
\[
F_s(d)\le q_RF_s(d-R-2),
\qquad d\ge R+2.
\]
Iterating this inequality yields
\[
F_s(d)\le C_s q_R^{\lfloor d/(R+2)\rfloor}
\le \frac{C_s}{q_R}e^{-m_sd},
\qquad
m_s=\frac{\log(1/q_R)}{R+2}>0.
\]
Consequently,
\begin{equation}\label{eq:finite-dirichlet-green-exponential-decay}
\mathbb E[G_\Omega(x,y)^s]
\le \frac{C_s}{q_R}\exp\!\left(-m_s|x-y|_1\right),
\qquad x,y\in\Omega,
\end{equation}
uniformly over all finite regions \(\Omega\subset\mathbb Z^2\). 

\subsection{Infinite-volume limit}

Fix \(W>0\) and \(0<s<1/2\). The notation in \eqref{eq:dirichlet-green-definition} agrees with that of the finite-volume estimate:
\[
G_{V_n}=\widehat{G}^{(n)}_\beta.
\]
Thus \eqref{eq:finite-dirichlet-green-exponential-decay} gives the bound for \(\widehat{G}^{(n)}_\beta\). To bound the wired Green matrix \(G_\beta^{(n)}\), we estimate the second term in \eqref{eq:wired-dirichlet-green-decomposition}.

By \eqref{eq:wired-field-from-green-function},
\[
e^{u_x^{(n)}}
=\sum_{z\in\partial^-V_n}
\widehat{G}^{(n)}_\beta(x,z)\eta_z^{(n)},
\qquad 0\le\eta_z^{(n)}\le4W.
\]
Subadditivity and \eqref{eq:finite-dirichlet-green-exponential-decay} give
\begin{equation}\label{eq:wired-field-moment-decay-from-boundary}
\begin{aligned}
\mathbb E[e^{s u_x^{(n)}}]
&\le (4W)^s\frac{C_s}{q_R}
\sum_{z\in\partial^-V_n}e^{-m_s|x-z|_1}\\
&\le K_{W,s}\exp\!\left(-\frac{m_s}{2}d_{V_n}(x)\right),
\end{aligned}
\end{equation}
where \(K_{W,s}>0\) is independent of \(n\) and \(V_n\): indeed, \(|x-z|_1\ge d_{V_n}(x)-1\) for \(z\in\partial^-V_n\), and the tail of the lattice sum is bounded by a constant times \(e^{-m_s d_{V_n}(x)/2}\).

Set
\[
\theta_s=\frac{1-2s}{1-s}\in(0,1),
\qquad 2s=\theta_s s+(1-\theta_s).
\]
H\"older's inequality and \eqref{eq:wired-field-moment-decay-from-boundary} yield
\[
\mathbb E[e^{2s u_x^{(n)}}]
\le \mathbb E[e^{s u_x^{(n)}}]^{\theta_s}
\mathbb E[e^{u_x^{(n)}}]^{1-\theta_s}
\le K_{W,s}^{\theta_s}
 e^{-\theta_s m_s d_{V_n}(x)/2}.
\]
The field \(u^{(n)}\) is independent of \(\gamma\), and \(\mathbb E[(2\gamma)^{-s}]=C_s\). Hence Cauchy--Schwarz gives
\begin{equation}\label{eq:wired-green-correction-moment-bound}
\begin{aligned}
\mathbb E\!\left[\left(\frac{e^{u_x^{(n)}+u_y^{(n)}}}{2\gamma}\right)^s\right]
&\le C_s\left(
\mathbb E[e^{2s u_x^{(n)}}]\mathbb E[e^{2s u_y^{(n)}}]
\right)^{1/2}\\
&\le C_sK_{W,s}^{\theta_s}
\exp\!\left[-\frac{\theta_s m_s}{4}
\bigl(d_{V_n}(x)+d_{V_n}(y)\bigr)\right].
\end{aligned}
\end{equation}
Combining \eqref{eq:wired-dirichlet-green-decomposition}, \eqref{eq:finite-dirichlet-green-exponential-decay}, and \eqref{eq:wired-green-correction-moment-bound}, using \((a+b)^s\le a^s+b^s\) for \(a,b\ge0\), gives
\[
\mathbb E[G_\beta^{(n)}(x,y)^s]
\le C_{W,s}\left(
e^{-c_{W,s}|x-y|_1}
+e^{-c_{W,s}(d_{V_n}(x)+d_{V_n}(y))}
\right),
\]
with \(c_{W,s}=\theta_s m_s/4\) and
\(C_{W,s}=\max\{C_s/q_R,C_sK_{W,s}^{\theta_s}\}\).

Now let \(V_n\uparrow\mathbb Z^2\) be an exhaustion by finite connected sets. By \cite[Theorem~1(i)]{Sabot2019}, \(G_{V_n}(x,y)\) increases almost surely to \(\widehat G(x,y)\). Monotone convergence and \eqref{eq:finite-dirichlet-green-exponential-decay} give
\begin{equation}\label{eq:infinite-dirichlet-green-exponential-decay}
\mathbb E[\widehat G(x,y)^s]
=\lim_{n\to\infty}\mathbb E[G_{V_n}(x,y)^s]
\le\frac{C_s}{q_R}e^{-m_s|x-y|_1},
\qquad x,y\in\mathbb Z^2.
\end{equation}
The VRJP on \(\mathbb Z^2\) with constant edge weight \(W>0\) is recurrent by \cite{Sabot2021}. Hence \cite[Theorem~1(iv)]{Sabot2019} gives \(\psi\equiv0\) almost surely, so \eqref{eq:infinite-wired-green-definition} yields \(G=\widehat G\). Thus \eqref{eq:infinite-dirichlet-green-exponential-decay} also holds for \(G\), giving \eqref{eq:main-green-exponential-decay}.

{\bf Acknowledgments :}
J. Wang acknowledges valuable interactive discussions with GPT-5.5 and GPT-6 regarding the proof of the quantitative derivative bound (in particular, Eq. (66) of the first arxiv version). All steps were subsequently reinterpreted, refined, and verified by the authors, who take full responsibility for the mathematical correctness. X. Zeng thanks C. Sabot for many inspiring discussions and constant encouragement. 

We are also grateful to R. Poudevigne-Auboiron for his thoughtful and constructive comments on the first arxiv version, his suggestions help us to revise parts of the proofs in Section~4 and~5 and to clarify their underlying ideas, improving both the arguments and their exposition. We acknowledge the support of the Institut de recherche en mathématiques, interactions \& applications: IRMIA++.

\bibliographystyle{alpha}

\end{document}